\documentclass[10pt,leqno, final]{amsart}

\usepackage{amsfonts,amssymb}
\usepackage{color}
\usepackage{graphicx}
\usepackage{mathtools}
\usepackage{extarrows}
\usepackage{algorithmic}   
\usepackage{caption}
\usepackage{subcaption}
\usepackage{tikz}
\usepackage{mathrsfs}
\usepackage[isbn=false]{biblatex}

\allowdisplaybreaks

\usepackage{todonotes}

\usepackage[colorlinks,linkcolor=blue,citecolor=blue,urlcolor=blue]{hyperref}
\usepackage[nameinlink,noabbrev,capitalize]{cleveref}

\newtheorem{theorem}{Theorem}[section]
\newtheorem{lemma}[theorem]{Lemma}
\newtheorem{corollary}[theorem]{Corollary}
\newtheorem{proposition}[theorem]{Proposition}
\newtheorem{assumption}[theorem]{Assumption}

\theoremstyle{definition}
\newtheorem{definition}[theorem]{Definition}
\newtheorem{example}[theorem]{Example}
\newtheorem*{example*}{Example}
\newtheorem{remark}[theorem]{Remark}

\numberwithin{equation}{section}

\usepackage{my_latex_commands}

\newcommand{\lip}{\mathrm{Lip}}
\newcommand{\wdist}{\mathrm{D}}
\newcommand{\yp}{\mathfrak{y}}
\renewcommand{\wp}{\mathfrak{w}}
\newcommand{\mres}{\mathbin{\vrule height 1.6ex depth 0pt width
		0.13ex\vrule height 0.13ex depth 0pt width 1.3ex}}
\newcommand{\mmres}{\mathbin{\vrule height 1.0ex depth 0pt width
		0.13ex\vrule height 0.13ex depth 0pt width 0.7ex}}

\addbibresource{refs.bib}

\usepackage[normalem]{ulem}

\newcommand\cmchange[1]{\textcolor{black}{#1}}

\title[Optimal control with transport regularization]
{Optimal control of the Poisson equation with transport regularization: Properties of optimal transport plans and transport maps}

\author{Christian Meyer} \address{Technische Universit\"at Dortmund, Fakult\"at f\"ur
  Mathematik, Lehrstuhl LSX, Vogelpothsweg 87, 44227 Dortmund, Germany}
\email{christian2.meyer@tu-dortmund.de}

\author{Gerd Wachsmuth} \address{BTU Cottbus-Senftenberg, Fakult\"at 1, 
Fachgebiet Optimale Steuerung, Platz der Deutschen Einheit 1, 03046 Cottbus, Germany}
\email{gerd.wachsmuth@b-tu.de}

\begin{document}

\subjclass[2010]{49K20, 49N60, 49J20, 35J08} 
\date{\today} 
\keywords{Optimal control of PDEs, measure control, optimal transport regularization, first-order necessary optimality conditions}

\begin{abstract} 
	An optimal control problem in the space of Borel measures governed by the Poisson equation is investigated. 
    The characteristic feature of the problem under consideration is the Tikhonov regularization term in form of the transportation distance of the control
    to a given prior. Existence of optimal solutions is shown and first-order necessary optimality conditions are derived. 
    The latter are used to deduce structural a priori information about the optimal control and its support based on 
    properties of the associated optimal transport plan.
\end{abstract}

\maketitle

\section{Introduction}

We consider an optimal control governed by the Poisson equation, where the control space is the space of regular Borel measures. 
In order to ensure this regularity of the control, we do not add the total variation of the control, also known as Radon norm, as Tikhonov regularization to the objective, 
which is frequently done in the literature, see, e.g., \cite{ClasonKunisch2011}. Instead, we consider the \emph{transportation distance to a given prior} as regularizer, which leads to the
optimal control problem
\begin{equation}\tag{P}\label{eq:optconintro}
    \left\{\quad
    \begin{aligned}
        \min \quad & J(y) + \alpha\, \wdist^c_{u^0}(u)\\
        \text{w.r.t.} \quad & y \in W_0^{1,q}(\Omega), u \in \frakM(\omega_1), \\
        \text{s.t.} \quad & - \laplace y = u \text{ in } \Omega, \quad y = 0 \text{ on } \partial\Omega .
    \end{aligned}
    \right.
\end{equation}
Herein, $\Omega \subset \R^{d_1}$, $d_1 \in \set{2, 3}$, is a bounded Lipschitz domain and $\omega_0 \subset \R^{d_0}$ and $\omega_1 \subset \overline{\Omega}$ are given compact sets.
Moreover, $J$ is a given objective, $\alpha > 0$, and $\wdist^c_{u^0}(u)$ denotes the 
transportation distance of the control $u$ to the given prior $u^0 \in \frakM(\omega_0)$.
If (in case $d_0 = d_1$) the transportation costs $c$ equal the $\gamma$-th power of the 
Euclidean distance with $\gamma \geq 1$, then $\wdist^c_{u^0}(u)$ is simply the Wasserstein distance of $u$ and $u^0$ of order $\gamma$
raised to the $\gamma$-th power,
see \cite[Section~6]{Vil09} for details. For the precise definition of $\wdist^c_{u^0}(u)$ as well as for the precise assumptions on the data, we refer to \cref{sec:setting}. 

A motivation for considering the Wasserstein distance instead of the distance measured in the Radon norm, i.e., $\|u - u^0\|_{\frakM(\omega_0)} = |u - u^0|(\omega_0)$, is 
its advantageous continuity properties. To be more precise, if $\omega_0 = \omega_1$ and $\omega_0$ is locally compact (and thus Polish),
then \cite[Theorem~6.9]{Vil09} implies that, if a sequence $\{u_k\} \subset \frakM(\omega_0)$ of non-negative Borel measures with $u_k(\omega_0) = u^0(\omega_0)$ for all $k \in \N$ 
converges weakly$^\ast$ to $u^0$, then the Wasserstein distance between $u_k$ and $u^0$ converges to zero, whereas $\|u_k - u^0\|_{\frakM(\omega_0)}$ does in general not
(consider for instance a sequence of Diracs $\delta_{x_k}$ with $x_k \to x_0$). Therefore, especially in context of inverse problems, it might make sense to consider the 
Wasserstein distance or, more generally, a transportation distance as regularizer.

A transportation distance as Tikhonov regularization has rarely been investigated so far in the context of optimal control. 
In \cite{BorchardWachsmuth2025:1}, Brenier's celebrated theorem has been used to reformulate a problem of type \eqref{eq:optconintro} 
with a prior $u^0$ that is absolutely continuous w.r.t.\ the Lebesgue measure in terms of an optimization problem 
over the set of convex functions. Under the additional assumption that $\omega_1$ just consists of finitely many points, a semismooth Newton method is 
derived to solve the problem.
Other optimal control problems involving the Wasserstein distance have been considered in the literature, 
see for instance \cite{BonnetFrankowska2021} and the references therein, 
but there the state and not the control function is an element of the Wasserstein space.
With regard to inverse problems, Wasserstein regularization has been considered for time-dependent problems in 
\cite{BrediesFanzon2020, mauritz2024bayesian}. The so-called Kantorovich-Rubinstein norm as regularizer is investigated in 
\cite{CarioniIglesiasWalter2023}.

In contrast to the transport distance, the Radon norm as Tikhonov regularization has been considered in numerous contributions. We only mention
\cite{ClasonKunisch2011, CasasKunisch2014, CasasKunisch2019:1, PieperWalter2021} and the references therein. 
Special emphasis has been laid on the sparsity pattern of the optimal control as a consequence of 
the first-order necessary optimality conditions involving the subdifferential of the Radon norm, 
see \cite{ClasonKunisch2011, CasasZuazua2013, BrediesPikkarainen2013, CasasClasonKunisch2013:1, 
CasasKunisch2014, KunischPieperVexler2014, CasasVexlerZuazua2015, CasasKunisch2016:1, 
KunischTrautmannVexler2016, TrautmannVexlerZlotnik2018:1, CasasKunisch2019:2, LeykekhmanVexlerWalter2020}. 
Here we pursue a similar goal and aim to derive characteristic features of the optimal control from the optimality system 
associated with \eqref{eq:optconintro}. There are in principle two ways to deduce structural a priori information about the optimal control and its support:
On the one hand, one can look at the mass of the prior located at a point in $\omega_0$ and investigate where it is transported to. On the other hand, 
one can analyze where the mass of the optimal control a point in $\omega_1$ is coming from. We will employ both perspectives in different scenarios, 
depending on the transportation costs $c$ and the objective $J$. 

The paper is organized as follows: After presenting our standing assumptions and the precise formulation of \eqref{eq:optconintro}
in the following section, we will turn to the existence of optimal solutions in \cref{sec:existence}. Afterwards, \cref{sec:fon} is dedicated to 
the derivation of an optimality system as first-order necessary optimality condition, which is a straightforward consequence of adjoint calculus along 
with the characterization of the subdifferential of the transportation distance that is provided in \cref{sec:subdifftrans}.
We then consider the case where $J$ is of tracking type over the entire domain $\Omega$ in \cref{sec:tracking}. 
As it turns out, one can show that the optimal state is essentially bounded in this case which gives in turn that the optimal control 
has no atom in $\interior(\omega_1)$. The underlying analysis is based on the second perspective, i.e., we investigate where the mass of the optimal control 
is coming from. The first perspective is then taken in \cref{sec:stritctconv}, where strictly convex transportation costs are considered 
under an additional curvature assumption on the adjoint state. This allows to show that the mass of the prior at a single point in $\omega_0$ 
is transported to just one point in $\omega_1$ so that sparsity patterns of the prior carry over to the optimal control. 
Passing on to \cref{sec:abscont}, we change the perspective again, i.e., we study where the mass is coming from.
In this way, we show that, in case of power-type transportation costs, if the prior 
is absolutely continuous w.r.t.\ the Lebesgue measure, then the same holds for the optimal control, too, at least in the interior of $\Omega$.
The last \cref{sec:metric} is dedicated to the case with metric transportation costs, where $c$ is just the Euclidean distance of two points. 
While the first perspective allows to prove a sparsity-type result, provided that the Lipschitz constant of the adjoint state is sufficiently small, 
the second allows us to show that the optimal control is absolutely continuous w.r.t.\ the $(d_1-1)$-dimensional Hausdorff measure 
in the interior of $\omega_1$, provided that the prior is again absolutely continuous w.r.t.\ the Lebesgue measure.

\section{Precise setting and standing assumptions}\label{sec:setting}

Let us shortly introduce the notation used throughout the paper. Given a point in $x \in \R^d$, its Euclidean norm is denoted by $\|x\|$ 
and, for the Euclidean inner product, we write $a \cdot b = \dual{a}{b}$, $a, b\in \R^d$.
By $B_r(x)$ we denote the open ball of radius $r>0$ around $x\in \R^d$. 
Given two points $x, \xi \in \R^d$, we define the segment $[x, \xi] \coloneqq \{ (1-t)\, x + t \, \xi \colon t \in [0,1]\}$. 
The Borel-$\sigma$-algebra associated with an open set $\Omega \subset \R^d$ is denoted by $\BB(\Omega)$. 
By $\frakM(\Omega;\R^m)$ we denote the space of vector-valued Borel measures on $\Omega$. If $m = 1$, we simply write $\frakM(\Omega)$.
Note that all vector-valued Borel measures on $\Omega$ are regular, see \cite[Theorem~2.18]{Rudin1987}.
The Lebesgue measure on $\R^d$ 
is $\lambda^d$ and, to ease notation, its restriction to an open set $\Omega \subset \R^d$ is frequently denoted by the same symbol. 
    Recall that the support of a (nonnegative) Borel measure $\mu$ on a compact set $X \subset \R^d$
    is defined via
    \begin{equation*}
        \supp(\mu)
        :=
        \set{
            x \in X \given \forall r > 0 : \mu(B_r(x)) > 0
        }
        .
    \end{equation*}
    It is easy to check that $\supp(\mu)$ is a closed set
    and that $\mu(K) = 0$ holds for every compact $K \subset X \setminus \supp(\mu)$.
    Further, if $\mu$ is inner regular
    (which follows if $\mu(X)$ is finite, see again \cite[Theorem~2.18]{Rudin1987}),
    we have
    \begin{equation}
        \label{eq:prop_support}
        \mu( X \setminus \supp(\mu) )
        =
        \sup\set{
            \mu(K)
            \given
            K \subset X \setminus \supp(\mu), \; \text{$K$ is compact}
        }
        =
        0 .
    \end{equation}

By $W^{k,r}(\Omega)$, $W_0^{k,r}(\Omega)$, $H^k(\Omega)$ and $H_0^k(\Omega)$ we denote the standard Sobolev spaces.
As usual, $W^{-1,r'}(\Omega)$ and $H^{-1}(\Omega)$ are the dual spaces of $W_0^{1,r}(\Omega)$ and $H_0^1(\Omega)$, respectively,
where $r'$ is the conjugate exponent to $r$.
The $s$-dimensional Hausdorff measure on $\R^d$ is denoted by $\HH^s$ and $\delta_x$ is the Dirac measure at a point $x$.
Moreover, given a function $\varphi : \Omega \to \R$, we define its Lipschitz constant on $\Omega$ by
\begin{equation*}
    \lip_\Omega(\varphi) := \sup\Big\{\tfrac{|\varphi(x) - \varphi(y)|}{\|x-y\|} \colon x, y\in \Omega, \; x\neq y\Big\} \subset [0, \infty] .
\end{equation*}        
Given two linear normed spaces $X$ and $Y$, we denote the space of linear and continuous operators from $X$ to $Y$ by $\LL(X, Y)$. 

Throughout the paper, we assume the following standing assumptions without mentioning them every time:

\begin{assumption}[Standing assumption]\label{assu:standing}
    Let $d_0 \in \N$ and $d_1 \in \{2, 3\}$ be given.
    The set $\Omega \subset \R^{d_1}$ is a bounded Lipschitz domain in the sense of \cite[Chapter~1.2]{grisvard}.
    We assume that $\omega_0 \subset \R^{d_0}$ and $\omega_1 \subset \overline\Omega \subset\R^{d_1}$ are both compact. 
    Moreover, $u^0 \in \frakM(\omega_0)$ with $u^0 \geq 0$ and $c\in C(\omega_0 \times \omega_1)$ are given.
        Finally, we denote by $p_\Omega > d_1$ the exponent from \cite[Theorem~0.5(a)]{JK95}
        and fix an exponent $q$ with
        $q' \in (d_1, p_\Omega)$.
        In case $d_1 = 3$, we additionally assume
        $q' < 6$.

        Moreover,
        the objective $J: W_0^{1,q}(\Omega) \to \R$
        is assumed to be continuous and Gâteaux differentiable
        and the regularization parameter satisfies $\alpha > 0$.
\end{assumption}

Given \cref{assu:standing}, we define the \emph{generalized transportation distance} by
\begin{equation}\label{eq:kant}
\begin{aligned}
    & \wdist^c_{u^0}(u):= \\
    & \qquad \inf\Big\{ \int_{\omega_0\times \omega_1} c \,\d \pi : \pi \in \frakM(\omega_0\times \omega_1),\;
    \pi \geq 0, \; {P_0}_{\#} \pi = u^0, \; {P_1}_{\#} \pi = u \Big\}
\end{aligned}
\end{equation}
along with the usual convention that $\inf \emptyset = \infty$ such that 
$\wdist^c_{u^0}(u) = \infty$, if $u \not\geq 0$ or $|u|(\omega_1) \neq |u^0|(\omega_0)$.
Whenever $u \geq 0$ and $|u|(\omega_1) = |u^0|(\omega_0)$,
it is well known that the minimization problem in \eqref{eq:kant} admits a solution, the so called 
\emph{optimal transport plan}, see, e.g., \cite[Theorem~1.4]{santambrogio}. We denote the set of optimal transport plans associated with 
$c$, $u$, and $u^0$ by 
\begin{equation}
    \KK_c(u^0, u) := 
    \argmin\Big\{ \int_{\omega_0\times \omega_1} c \,\d \pi : 
    \pi \geq 0, \; {P_0}_{\#} \pi = u^0, \; {P_1}_{\#} \pi = u \Big\}.
\end{equation}

For an exponent $r \in (1,\infty)$,
we define the weak Laplacian
$\laplace : W_0^{1,r}(\Omega) \to W^{-1,r}(\Omega)$ by
\begin{equation*}
    - \dual{\laplace y}{v} \coloneqq  \int_\Omega \nabla y \cdot \nabla v \,\d \lambda^{d_1} ,
    \quad \forall y\in W_0^{1,r}(\Omega), \; v \in W_0^{1,r'}(\Omega).
\end{equation*}
The independence of the symbol $\laplace$ on the exponent $r$ will not cause any confusion.
The chosen exponent $p_\Omega$ from \cite[Theorem~0.5(a)]{JK95}
implies that
for every $p \in (p_\Omega', p_\Omega)$
the equation
\begin{equation*}
    -\laplace y = f
\end{equation*}
has a unique solution $y \in W_0^{1,p}(\Omega)$
for every $f \in W^{-1,p}(\Omega)$.
This applies, in particular, to the choice $p = q$.

Then the optimal control problem under consideration reads
\begin{equation}\tag{P}\label{eq:optctrl}
    \left\{\quad
    \begin{aligned}
        \min \quad & J(y) + \alpha\, \wdist^c_{u^0}(u)\\
        \text{w.r.t.} \quad & y \in W_0^{1,q}(\Omega), u \in \frakM(\omega_1), \\
        \text{s.t.} \quad & - \laplace y = E^* u \text{ in } W^{-1,q}(\Omega).
    \end{aligned}
    \right.
\end{equation}

Because of $q' > d_1$,
we have the Sobolev embedding
$W_0^{1,q'}(\Omega) \embed C_0(\Omega)$.
The concatenation with the restriction operator from $C_0(\Omega) \subset C(\overline\Omega)$ to $C(\omega_1)$
yields an operator
$E \colon W_0^{1,q'}(\Omega) \to C(\omega_1)$.
On the right-hand side of the state equation,
the adjoint $E^* \colon \frakM(\omega_1) \to W^{-1,q}(\Omega)$ appears,
i.e., we have
\begin{equation*}
    \dual{E^* u}{v} \coloneqq \int_{\omega_1} v \,\d u, \quad v\in W_0^{1,q'}(\Omega)
\end{equation*}
Note that since $W_0^{1,q'}(\Omega) \embed C_0(\Omega)$,
all mass of $u$ which is located on the boundary $\partial\Omega$
has no effect on the state variable.

\section{Existence of optimal controls}\label{sec:existence}

\begin{lemma}[Existence for the state equation]\label{lem:pdeexist}
    For every $u \in \frakM(\omega_1)$, there exists a unique solution $y \in W_0^{1,q}(\Omega)$ of the state equation given by 
    \begin{equation*}
        y = (-\laplace)^{-1} E^* u.    
    \end{equation*}     
    The solution operator $S := (-\laplace)^{-1} E^* : \frakM(\omega_1)\to W_0^{1,q}(\Omega)$ 
    is linear and completely continuous, i.e., if $u_n \weak^* u$ in $\frakM(\omega_1)$, 
    then $S u_n \to S u $ in $W_0^{1,q}(\Omega)$.    
\end{lemma}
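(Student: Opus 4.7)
My plan has three natural steps corresponding to the three assertions: existence/uniqueness, linearity, and complete continuity.

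For existence and uniqueness, I would simply verify that the composition $(-\laplace)^{-1} \circ E^*$ is well defined. The operator $E \colon W_0^{1,q'}(\Omega) \to C(\omega_1)$ is bounded (it factors through the Sobolev embedding $W_0^{1,q'}(\Omega) \embed C_0(\Omega)$, which is available because $q' > d_1$), so its adjoint $E^*$ takes values in $W^{-1,q}(\Omega)$. The assumption $q \in (p_\Omega', p_\Omega)$ then gives via \cite[Theorem~0.5(a)]{JK95} that $-\laplace \colon W_0^{1,q}(\Omega) \to W^{-1,q}(\Omega)$ is an isomorphism, so $y := (-\laplace)^{-1} E^* u$ is the unique solution. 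Linearity of $S$ is immediate since it is a composition of linear operators.

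The main content is complete continuity, and the key observation I would rely on is that $E$ is not only continuous but actually \emph{compact}. Indeed, Morrey's embedding yields $W_0^{1,q'}(\Omega) \embed C^{0,\alpha}(\overline\Omega)$ for $\alpha = 1 - d_1/q' > 0$, and Arzelà-Ascoli shows that $C^{0,\alpha}(\overline\Omega) \embed C(\overline\Omega)$ is compact; composing with the restriction to $\omega_1$, the operator $E \colon W_0^{1,q'}(\Omega) \to C(\omega_1)$ is compact.

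The hard part is then upgrading weak-$*$ convergence in $\frakM(\omega_1)$ to norm convergence of $E^* u_n$ in $W^{-1,q}(\Omega)$. Let $u_n \weak^* u$; by Banach-Steinhaus, $M := \sup_n \|u_n\|_{\frakM(\omega_1)} < \infty$. Let $B$ denote the closed unit ball of $W_0^{1,q'}(\Omega)$. Because $E$ is compact, $E(B)$ is totally bounded in $C(\omega_1)$, so for prescribed $\varepsilon > 0$ there exist finitely many $\varphi_1,\dots,\varphi_N \in C(\omega_1)$ such that every $E v$ with $v \in B$ lies within $\varepsilon$ of some $\varphi_i$. Splitting
\begin{equation*}
    \dual{E^*(u_n-u)}{v}
    = \int_{\omega_1} (Ev - \varphi_i)\,\d(u_n - u)
    + \int_{\omega_1} \varphi_i\,\d(u_n - u)
\end{equation*}
and estimating the first integral by $2M\varepsilon$ while invoking weak-$*$ convergence on each of the finitely many fixed $\varphi_i$ for the second, one obtains $\|E^*(u_n - u)\|_{W^{-1,q}(\Omega)} \to 0$. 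Continuity of $(-\laplace)^{-1}$ finishes the proof: $S u_n \to S u$ in $W_0^{1,q}(\Omega)$.

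The only subtlety I anticipate is bookkeeping the totally-bounded covering argument cleanly; once the compactness of $E$ is in hand, the rest is essentially Schauder's theorem adapted to weak-$*$ rather than weak convergence, which works because one can uniformly approximate $E(B)$ by a finite set of test functions against which weak-$*$ convergence acts coordinatewise.
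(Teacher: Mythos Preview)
Your proof is correct and follows the same route as the paper: both rely on the compactness of the embedding $W_0^{1,q'}(\Omega) \embed C(\overline{\Omega})$ (equivalently, compactness of $E$) to obtain the complete continuity of $S$. The paper simply asserts that complete continuity follows from this compact embedding, whereas you spell out the underlying Schauder-type argument explicitly via the totally-bounded covering; the content is the same.
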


\begin{proof}
        As already mentioned, the choice of $p_\Omega$ and $q$
        implies
        that the state equation admits a unique solution
        in $W^{1,q}_0(\Omega)$
        for every right hand side in
        $W^{-1,q}(\Omega)$.
    Due to the continuous embedding $W_0^{1,q'}(\Omega) \embed C(\overline{\Omega})$
    already mentioned above, $\frakM(\omega_1) \embeds W^{-1,q}(\Omega)^*$ and we obtain 
    existence and uniqueness. 
    The complete continuity of the associated solution operator follows from the 
    compactness of the embedding $W_0^{1,q'}(\Omega) \embed C(\overline{\Omega})$.
\end{proof}

\begin{theorem}[Existence of optimal controls]\label{thm:existoptctrl}
    There exists an optimal control $\bar u \in \frakM(\omega_1)$.
\end{theorem}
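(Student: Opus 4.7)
The plan is the standard direct method of the calculus of variations. First, I would verify that the infimum is finite by exhibiting one feasible control: picking any $x \in \omega_1$, the measure $u = u^0(\omega_0)\,\delta_x$ is admissible for the transport problem and, since $c$ is continuous on the compact $\omega_0 \times \omega_1$, the product plan $u^0 \otimes \delta_x$ gives $\wdist^c_{u^0}(u) \leq \|c\|_{C(\omega_0 \times \omega_1)}\, u^0(\omega_0) < \infty$.

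Next I would take a minimizing sequence $\{u_n\} \subset \frakM(\omega_1)$. Passing to a tail, I may assume $\wdist^c_{u^0}(u_n) < \infty$, which forces $u_n \geq 0$ and $u_n(\omega_1) = u^0(\omega_0)$; hence $\|u_n\|_{\frakM(\omega_1)}$ is uniformly bounded. Using $\frakM(\omega_1) = C(\omega_1)^*$ and the Banach--Alaoglu theorem, I would extract a (non-relabeled) subsequence with $u_n \weak^* \bar u$. Nonnegativity is preserved in the weak-* limit, and testing against $1 \in C(\omega_1)$ gives $\bar u(\omega_1) = u^0(\omega_0)$, so $\bar u$ is admissible.

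It then remains to pass both summands of the objective to the limit inferior. The tracking term is immediate: \cref{lem:pdeexist} yields $Su_n \to S\bar u$ in $W_0^{1,q}(\Omega)$, so continuity of $J$ gives $J(Su_n) \to J(S\bar u)$. The heart of the argument is weak-* lower semicontinuity of $u \mapsto \wdist^c_{u^0}(u)$. To establish it, I would extract, again by Banach--Alaoglu, a weak-* accumulation point $\bar\pi$ of a sequence of optimal plans $\pi_n \in \KK_c(u^0, u_n)$, which all share the fixed total mass $u^0(\omega_0)$. Weak-* continuity of pushforwards along the continuous projections $P_0, P_1$ gives ${P_0}_{\#}\bar\pi = u^0$ and ${P_1}_{\#}\bar\pi = \bar u$, so $\bar\pi$ is admissible for the problem defining $\wdist^c_{u^0}(\bar u)$; continuity of $c$ on $\omega_0 \times \omega_1$ then delivers $\int c\,\d\pi_n \to \int c\,\d\bar\pi$, whence $\wdist^c_{u^0}(\bar u) \leq \liminf_n \wdist^c_{u^0}(u_n)$.

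Combining these two items shows that $\bar u$ attains the infimum. The only genuine work is the lower semicontinuity of the transport term; thanks to compactness of $\omega_0 \times \omega_1$ and continuity of $c$, however, this stays within the scope of standard weak-* compactness for the set of transport plans.
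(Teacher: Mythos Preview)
Your proof is correct and follows the same direct-method skeleton as the paper: bound the minimizing sequence via the mass constraint, extract a weak-$*$ limit, pass $J\circ S$ to the limit using \cref{lem:pdeexist}, and invoke weak-$*$ lower semicontinuity of $\wdist^c_{u^0}$.

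The one substantive difference lies in how that lower semicontinuity is justified. You argue directly on the level of transport plans: extract a weak-$*$ accumulation point $\bar\pi$ of optimal plans $\pi_n$, check that the marginals pass to the limit, and use $c\in C(\omega_0\times\omega_1)$ to get $\int c\,\d\pi_n \to \int c\,\d\bar\pi \ge \wdist^c_{u^0}(\bar u)$. The paper instead appeals to \cref{lem:Fconj} in the appendix, which identifies $\wdist^c_{u^0}$ as the Fenchel conjugate of a functional on $C(\omega_1)$; conjugate functionals are automatically weak-$*$ lower semicontinuous. Your route is more self-contained and elementary for this particular statement, while the paper's route reuses machinery (the Kantorovich duality encoded in \cref{lem:Fconj}) that is needed anyway for the subdifferential characterization in \cref{sec:fon}. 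Your additional verification that the infimum is finite and that the limit $\bar u$ is admissible is not strictly required (finiteness of $\wdist^c_{u^0}(\bar u)$ from the lsc estimate already forces admissibility), but it does no harm.
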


\begin{proof}
    The assertion follows in a standard way by the direct method of the calculus of variations. 
    Using the solution operator from \cref{lem:pdeexist}, we can rewrite the optimal control 
    problem in reduced form:
    \begin{equation}
        \eqref{eq:optctrl}
        \quad \Longleftrightarrow \quad
        \min_{u\in \frakM(\omega_1)} \; f(u) := J(S u) + \alpha\, \wdist^c_{u^0}(u).
    \end{equation}
    Let now $\{u_n\}_{n\in \N}$ be an infimal sequence, i.e., 
    \begin{equation}\label{eq:infseq}
        f(u_n) \to j := \inf_{u\in \frakM(\omega_1)} \; f(u) \in \R \cup \{-\infty\}.
    \end{equation}
    Since $\wdist^c_{u^0}(u)  = \infty$, if $|u|(\omega_1) \neq |u^0|(\omega_0)$, it follows that $u_n$ is bounded in 
    $\frakM(\omega_1)$ such that there is a subsequence, denoted by the same symbol to ease notation, 
    such that $u_n \weak^* \bar u$ in $\frakM(\omega_1)$. From \cref{lem:Fconj} in the Appendix, we know that 
    $\wdist^c_{u^0}$ is the Fenchel conjugate of a functional on $C(\omega_1)$ and as such, it is lower semicontinuous w.r.t.\ weak$^*$ convergence, 
    i.e., 
    \begin{equation}\label{eq:wdistlsc}
        \liminf_{n\to\infty} \wdist^c_{u^0}(u_n) \geq \wdist^c_{u^0}(\bar u) .
    \end{equation}
    Concerning the first part of the objective, the absolute continuity of $S$ by \cref{lem:pdeexist} 
    and the assumed continuity of $J$ immediately imply $J(S u_n) \to J(S \bar u)$. 
    Therefore, along the subsequence, $\liminf_{n\to\infty} f(u_n) \geq f(\bar u)$ and, due to \eqref{eq:infseq}, $j$
    is thus finite and $\bar u$ optimal.
\end{proof}

\begin{remark}
    Using the stability of transport plans from \cite[Theorem~5.20]{Vil09}, one can even show that \eqref{eq:wdistlsc} holds with equality and 
    one has the convergence $\wdist^c_{u^0}(u_n) \to \wdist^c_{u^0}(\bar u)$. 
\end{remark}

\begin{corollary}\label{cor:convex}
    If $J$ is convex, then \eqref{eq:optctrl} is a convex minimization problem. 
    Furthermore, if $J$ is strictly convex,
    then the optimal state $\bar y$ and the optimal control restricted to the interior of $\Omega$, i.e., 
    $\bar u \mres (\Omega \cap \omega_1)$ are unique. Thus, if additionally
    $\omega_1 \subset \Omega$, then also the optimal control is unique.
\end{corollary}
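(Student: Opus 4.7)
The plan is to run the standard strict-convexity argument adapted to the measure-valued setting. First I would verify that the reduced problem $\min_u f(u) = J(Su) + \alpha\, \wdist^c_{u^0}(u)$ is convex whenever $J$ is convex. Linearity of $S$ combined with convexity of $J$ handles $u \mapsto J(Su)$. For the transport term, given $u_1, u_2 \in \frakM(\omega_1)$ with finite distance and optimal plans $\pi_i \in \KK_c(u^0, u_i)$, the convex combination $(1-t)\pi_1 + t\pi_2$ is a non-negative Borel measure on $\omega_0 \times \omega_1$ with first marginal $u^0$ and second marginal $(1-t)u_1 + t u_2$; it is therefore admissible for the transport problem defining $\wdist^c_{u^0}((1-t)u_1 + t u_2)$. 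Using it as a test plan yields the desired inequality
\[
    \wdist^c_{u^0}((1-t)u_1 + t u_2) \le (1-t)\, \wdist^c_{u^0}(u_1) + t\, \wdist^c_{u^0}(u_2),
\]
and the case of infinite values is trivial.

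Second, assuming $J$ is strictly convex, let $\bar u_1, \bar u_2$ be two optimal controls with states $\bar y_i = S \bar u_i$. The midpoint $\tfrac12(\bar u_1 + \bar u_2)$ is admissible and its state is $\tfrac12(\bar y_1 + \bar y_2)$. If $\bar y_1 \neq \bar y_2$, convexity of $\wdist^c_{u^0}$ together with strict convexity of $J$ would yield $f(\tfrac12(\bar u_1 + \bar u_2)) < \tfrac12 f(\bar u_1) + \tfrac12 f(\bar u_2) = j$, contradicting optimality. Hence $\bar y := \bar y_1 = \bar y_2$, proving uniqueness of the state.

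Third, I would translate uniqueness of the state into uniqueness of the control on $\Omega \cap \omega_1$. The state equation gives $E^*(\bar u_1 - \bar u_2) = -\laplace(\bar y_1 - \bar y_2) = 0$ in $W^{-1,q}(\Omega)$, so $\int_{\omega_1} v \,\d(\bar u_1 - \bar u_2) = 0$ for every $v \in W_0^{1,q'}(\Omega)$, and in particular for every $v \in C_c^\infty(\Omega)$ (viewed on $\omega_1$ via extension by zero on $\partial\Omega$). A standard mollification argument, keeping supports inside a common compact subset of $\Omega$, shows that $C_c^\infty(\Omega)$ is uniformly dense in $C_c(\Omega)$, and dominated convergence against the finite signed measure $\bar u_1 - \bar u_2$ extends the identity to all $v \in C_c(\Omega)$. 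This exhibits $(\bar u_1 - \bar u_2) \mres (\Omega \cap \omega_1)$ as a finite Borel measure on $\Omega$ annihilated by $C_c(\Omega)$, hence zero by Riesz representation. Finally, if $\omega_1 \subset \Omega$, then $\omega_1 \cap \Omega = \omega_1$, so the restriction above is the full measure and the optimal control is globally unique. The only subtle step is the density argument just above; everything else is essentially routine convex analysis combined with the linearity of $S$ established in \cref{lem:pdeexist}.
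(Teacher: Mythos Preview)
Your proof is correct and follows essentially the same structure as the paper: convexity of the reduced functional, then the midpoint argument for uniqueness of the state, then injectivity of $S$ on $\frakM(\Omega\cap\omega_1)$ via a density argument. The only noteworthy difference is that you establish convexity of $\wdist^c_{u^0}$ directly by convex-combining admissible transport plans, whereas the paper invokes \cref{lem:Fconj} (the Fenchel-conjugate characterization); your argument is more elementary and self-contained, while the paper's route has the side benefit of simultaneously yielding weak-$*$ lower semicontinuity. For the density step the paper simply cites that $W^{1,q'}_0(\Omega)$ is dense in $C_0(\Omega)$, which is of course the same statement you prove via mollification of $C_c^\infty(\Omega)$ in $C_c(\Omega)$.
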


\begin{proof}
   The convexity of $\wdist^c_{u^0}$ follows from \cref{lem:Fconj}.
   Consequently, the linearity of $S$ gives the first assertion.
    
    For the second assertion, observe that $S$ is injective, considered as an operator from $\frakM(\Omega \cap \omega_1)$ to $W^{1,q}_0(\Omega)$, which is seen as follows: 
    if the solution $y$ equals zero, then 
    it follows that $\int_{\omega_1} v \d u = 0$ for all $v\in W^{1,q'}_0(\Omega)$
    and the density of $W^{1,q'}_0(\Omega)$ in $C_0(\Omega)$ yields that $u \mres (\Omega\cap \omega_1) = 0$. 
    Therefore, thanks to the linearity of $S$, the strict convexity of $J$ carries over to $J\circ S$ restricted to $\frakM(\Omega\cap \omega_1)$. 
    Along with the convexity of $\wdist^c_{u^0}$, this implies that the optimal state and the optimal control restricted to $\Omega \cap \omega_1$  are unique. 
\end{proof}


\section{First-order optimality conditions}\label{sec:fon}

For the derivation of first-order necessary optimality conditions, we need the concept of $c$-conjugate functions, which is well known in optimal transport theory.

\begin{definition}
    The $c$-conjugate function of a continuous function $\eta \in C(\omega_0)$ is given by 
    \begin{equation*}
        \eta^c : \omega_1 \to \R, \quad \eta^c(\xi) := \inf_{x\in \omega_0} \big( c(x,\xi) - \eta(x)\big).
    \end{equation*}
    Analogously, we define the $\overline{c}$-conjugate function to a given continuous function 
    $\zeta \in C(\omega_1)$ by 
    \begin{equation*}
        \zeta^{\overline c} : \omega_0 \to \R, \quad 
        \zeta^{\overline{c}}(x) := \inf_{\xi\in \omega_1} \big( c(x,\xi) - \zeta(\xi)\big).
    \end{equation*}
    A function $\varphi:\omega_0 \to \overline{\R}$ is called $c$-concave, if a function 
    $\zeta: \omega_1\to \overline{\R}$ exists such that $\varphi = \zeta^{\overline{c}}$. 
    Analogously, a function $\psi: \omega_1 \to \overline{\R}$ is $\overline{c}$-concave, 
    provided that there is $\eta: \omega_0\to \overline{\R}$ such that $\psi = \eta^c$. 
\end{definition}

Note that the conjugate functions only attain finite values due to the compactness of $\omega_0$ and $\omega_1$.
It is known that $\eta^c$ and $\zeta^{\overline c}$ inherit the modulus of continuity of $c$, provided that 
$\eta$ and $\zeta$, respectively, are bounded from above
(see \cite[Section~1.2]{santambrogio}).
Consequently, $\eta^c$ and $\zeta^{\overline c}$ 
are uniformly continuous.

As shown in \cref{sec:subdifftrans}, the Fenchel (pre-)conjugate functional to $\wdist^c_{u^0}$ is given by
\begin{equation}\label{eq:preconj}
    (\wdist^{c}_{u^0})^* : C(\omega_1) \ni \psi \mapsto - \int_{\omega_0} \psi^{\overline c}(x) \,\d u^0(x) \in \R, 
\end{equation}
see \cref{lem:Fconj}.
Note that the integral is well defined and finite, since $\psi^{\overline c}$ is continuous as explained above.

\begin{theorem}[Optimality conditions]\label{thm:fon}
    Let $\bar u$ be a locally optimal control to \eqref{eq:optctrl} with associated state $\bar y = S(\bar u)$. 
    Assume that $J$ is Gâteaux differentiable at $\bar y$.
    Then there exists an adjoint state $p\in W_0^{1,q'}(\Omega) \embed C(\overline{\Omega})$ such that 
    \begin{subequations}\label{eq:optsys}
    \begin{align}
        -\laplace p &= J'(\bar y) \quad \text{in } W^{-1,q'}(\Omega), \label{eq:adjpde}\\
        0 &\in p + \alpha\,\partial \wdist^c_{u^0}(\bar u). \label{eq:pinsubdiff}
    \end{align}
    \end{subequations}
    The latter condition \eqref{eq:pinsubdiff} is fulfilled, 
    if and only if $\big( (-\frac{1}{\alpha} p|_{\omega_1})^{\overline c}, -\frac{1}{\alpha}  p|_{\omega_1} \big)$ 
    solves the dual Kantorovich problem given by 
    \begin{equation}\label{eq:barkantdual}
        \left\{\quad 
            \begin{aligned}
                \max \quad & \int_{\omega_0} \varphi \,\d u^0 + \int_{\omega_1} \psi \,\d \bar u \\
                \textup{w.r.t.} \quad & \varphi \in C(\omega_0), \; \psi \in C(\omega_1), \\
                \textup{s.t.} \quad & \varphi(x) + \psi(\xi) \leq c(x,\xi) \;\forall \, (x,\xi) \in C(\omega_0\times \omega_1).
            \end{aligned}
        \right.
    \end{equation}
    If $J$ is additionally convex, then \eqref{eq:optsys} is also sufficient for optimality.
\end{theorem}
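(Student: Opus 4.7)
The plan is to work with the reduced problem
$\min_{u \in \frakM(\omega_1)} f(u) := J(Su) + \alpha\, \wdist^c_{u^0}(u)$
and to combine two ingredients: the G\^ateaux differentiability of $J \circ S$ at $\bar u$, which identifies its derivative with the trace $p|_{\omega_1}$ via the adjoint equation, together with the convexity and weak$^*$ lower semicontinuity of $\wdist^c_{u^0}$ and the explicit formula \eqref{eq:preconj} for its pre-conjugate from \cref{lem:Fconj}. First I would construct $p$: since $J'(\bar y) \in W^{-1,q'}(\Omega)$ and $q' \in (p_\Omega', p_\Omega)$, the elliptic regularity recalled in \cref{sec:setting} produces a unique $p \in W_0^{1,q'}(\Omega)$ solving \eqref{eq:adjpde}, and the Sobolev embedding $W_0^{1,q'}(\Omega) \embed C(\overline\Omega)$ makes $p|_{\omega_1} \in C(\omega_1)$ meaningful. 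Using that $-\laplace$ is symmetric and that $S = (-\laplace)^{-1} E^*$, a short duality computation gives $\dual{J'(\bar y)}{S h} = \dual{p}{E^* h} = \int_{\omega_1} p \,\d h$ for every $h \in \frakM(\omega_1)$, so that the G\^ateaux derivative of $J \circ S$ at $\bar u$ is represented by $p|_{\omega_1} \in C(\omega_1)$.

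Next I would derive \eqref{eq:pinsubdiff} by a standard variational argument. For any $v \in \frakM(\omega_1)$ with $v \geq 0$ and $v(\omega_1) = u^0(\omega_0)$, the segment $\bar u + t(v - \bar u)$ stays in the effective domain of $\wdist^c_{u^0}$ and in any fixed neighborhood of $\bar u$ for $t > 0$ small. Local optimality of $\bar u$, convexity of $\wdist^c_{u^0}$ (so that its difference quotient in $t$ is bounded above by $\wdist^c_{u^0}(v) - \wdist^c_{u^0}(\bar u)$), and the derivative computation above yield
\begin{equation*}
    0 \leq \int_{\omega_1} p \,\d(v - \bar u) + \alpha \bigl[ \wdist^c_{u^0}(v) - \wdist^c_{u^0}(\bar u) \bigr] .
\end{equation*}
For $v$ outside the effective domain this holds trivially since $\wdist^c_{u^0}(v) = \infty$. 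Rearranging yields exactly $-\tfrac{1}{\alpha} p|_{\omega_1} \in \partial \wdist^c_{u^0}(\bar u)$, which is \eqref{eq:pinsubdiff}.

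For the equivalence with the dual Kantorovich problem I would apply Fenchel--Young: since $\wdist^c_{u^0}$ is convex and weak$^*$ lower semicontinuous by \cref{lem:Fconj}, the inclusion \eqref{eq:pinsubdiff} is equivalent to
\begin{equation*}
    \wdist^c_{u^0}(\bar u) + (\wdist^c_{u^0})^*\bigl(-\tfrac{1}{\alpha} p|_{\omega_1}\bigr) = \int_{\omega_1} -\tfrac{1}{\alpha} p \,\d \bar u ,
\end{equation*}
and inserting the pre-conjugate formula \eqref{eq:preconj} rewrites this as
\begin{equation*}
    \int_{\omega_0} \bigl(-\tfrac{1}{\alpha} p|_{\omega_1}\bigr)^{\overline c} \,\d u^0 + \int_{\omega_1} -\tfrac{1}{\alpha} p \,\d \bar u = \wdist^c_{u^0}(\bar u) .
\end{equation*}
By Kantorovich duality (the supremum in \eqref{eq:barkantdual} equals $\wdist^c_{u^0}(\bar u)$, and the candidate pair is admissible since $(\eta^c)(x) + \eta(\xi) \leq c(x,\xi)$ by definition of the $\overline c$-transform), this is precisely the optimality of $\bigl( (-\tfrac{1}{\alpha} p|_{\omega_1})^{\overline c}, -\tfrac{1}{\alpha} p|_{\omega_1} \bigr)$ for \eqref{eq:barkantdual}. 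Finally, if $J$ is convex, then linearity of $S$ makes $J \circ S$ and hence $f$ convex, so $0 \in \partial f(\bar u)$ becomes equivalent to global optimality. The only delicate ingredient is the pre-conjugate formula \eqref{eq:preconj}, which is deferred to \cref{sec:subdifftrans}; the rest is routine convex subdifferential calculus, provided the dualities between $\frakM(\omega_1)$, $C(\omega_1)$, $W_0^{1,q}(\Omega)$, and $W^{-1,q'}(\Omega)$ are tracked carefully.
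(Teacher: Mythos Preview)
Your proof is correct and essentially identical to the paper's: both obtain \eqref{eq:pinsubdiff} via the convexity-based variational argument, identify $p = S^* J'(\bar y)$ by adjoint calculus, and characterize the subdifferential through Fenchel--Young and Kantorovich duality. The paper simply defers the last step to \cref{prop:kantsubdiff}, whose proof in the appendix is exactly the Fenchel--Young computation you spell out.
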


\begin{proof}
    Let $\bar u \in \frakM(\omega_1)$ be locally optimal. Then the convexity of $\wdist^c_{u^0}$ implies for 
    every $u \in \frakM(\omega_1)$ and every $t> 0$ sufficiently small that
    \begin{equation*}
    \begin{aligned}
        0 & \leq \frac{J(S(\bar u + t (u - \bar u))) - J(S\bar u)}{t}
        + \frac{\alpha}{t} \big( \wdist^c_{u^0}(\bar u + t (u - \bar u)) - \wdist^c_{u^0}(\bar u) \big)\\
        &\leq \frac{J(S(\bar u + t (u - \bar u))) - J(S\bar u)}{t}  + \alpha\, \wdist^c_{u^0}(u) - \alpha\,\wdist^c_{u^0}(\bar u)     
    \end{aligned}
    \end{equation*}
    Since $J$ is Gâteaux differentiable at $S\bar u$ by assumption, this leads to 
    \begin{equation*}
        \alpha\, \wdist^c_{u^0}(u) \geq \alpha\, \wdist^c_{u^0}(\bar u) 
        - \dual{J'(S\bar u)}{S( u - \bar u)}_{W^{-1,q'}(\Omega), W_0^{1,q}(\Omega)}
        \quad \forall\, u \in \frakM(\omega_1),
    \end{equation*}
    i.e., $- S^* J'(S\bar u) \in \alpha\, \partial\wdist^c_{u^0}(\bar u)$. Then standard adjoint calculus shows that 
    $p := S^* J'(S\bar u)$
    is the solution of the adjoint equation in \eqref{eq:adjpde}. 
    
    The second statement follows from $p \in W_0^{1,q'}(\Omega) \embed C(\overline\Omega)$
    and the characterization of the 
    subdifferential of $\wdist^c_{u^0}$ from \cref{prop:kantsubdiff} in the appendix.
    
    Finally, if $J$ is convex, \cref{cor:convex} yields that \eqref{eq:optctrl} is a convex problem such that the necessary first-order optimality conditions 
    are also sufficient.
\end{proof}

\begin{remark}\label{rem:pnotcconc}
    It is to be noted that the optimal objective value of the dual Kantorovich problem a
    does not change if one restricts the feasible set to $c$-concave functions. 
    The elements of the subdifferential of $ \wdist^c_{u^0}$ however need not necessarily be $c$-concave 
    and thus the same holds for the adjoint state $p|_{\omega_1}$.
    We will provide an example in \cref{sec:metric}, where $p|_{\omega_1}$ is not $c$-concave, see \cref{ex:linemeasure}.
\end{remark}

Note that the optimality system in \eqref{eq:optsys} is equivalent to the following fixed point equation 
\begin{equation}
    \bar u \in \partial  (\wdist^c_{u^0})^*\big(-\tfrac{1}{\alpha} S^* J'(S\bar u)\big)
\end{equation}
with $ (\wdist^c_{u^0})^*$ as defined in \eqref{eq:preconj}.

\begin{lemma}
    \label{lem:support_plan}
    Let $\bar u$ be a local optimal solution of \eqref{eq:optctrl}
    such that $J$ is Gâteaux differentiable at $\bar y = S(\bar u)$
    and denote by $p$ the adjoint given by \cref{thm:fon}.
    Moreover, we denote by $\bar\pi \in \KK_c(u^0, \bar u)$
    an optimal transport plan.
    Then,
    \begin{equation}
        \label{eq:optimality_support_2}
        \supp(\bar\pi)
        \subset
        \set*{ (x, \xi) \in \omega_0 \times \omega_1
            \given
            c(x, \xi) + \frac1\alpha p(\xi)
            =
            \inf_{\eta \in \omega_1} c(x, \eta) + \frac1\alpha p(\eta)
        }.
    \end{equation}
    Moreover, for every $\xi \in \supp(\bar u)$, there exists $x \in \supp(u^0)$
    such that $(x, \xi) \in \supp(\bar\pi)$.
    This implies
    \begin{equation}
        \label{eq:optimality_support}
        c(x, \xi) + \frac1\alpha p(\xi)
        =
        \inf_{\eta \in \omega_1} c(x, \eta) + \frac1\alpha p(\eta).
    \end{equation}
    i.e., $\xi$ is a minimizer of $c(x, \cdot) + \frac1\alpha p$.
    Similarly,
    for every $x \in \supp(u^0)$, there exists $\xi \in \supp(\bar u)$
    such that
    $(x, \xi) \in \supp(\bar\pi)$, i.e.,
    \eqref{eq:optimality_support} holds.
\end{lemma}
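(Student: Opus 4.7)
The plan is to extract the optimality condition \eqref{eq:optimality_support_2} from the complementary slackness relation of Kantorovich duality, and then to derive the support statements by combining the marginal constraints on $\bar\pi$ with a compactness argument.

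First I would set $\psi := -\tfrac{1}{\alpha}p|_{\omega_1} \in C(\omega_1)$ and $\varphi := \psi^{\overline c} \in C(\omega_0)$. By \cref{thm:fon}, $(\varphi,\psi)$ solves the dual Kantorovich problem \eqref{eq:barkantdual}, while $\bar\pi$ is optimal for the primal \eqref{eq:kant}. Invoking strong Kantorovich duality (so that both optimal values coincide with $\wdist^c_{u^0}(\bar u)$) and rewriting the dual objective using the marginal identities ${P_0}_\#\bar\pi = u^0$ and ${P_1}_\#\bar\pi = \bar u$, I would obtain
\begin{equation*}
\int_{\omega_0\times\omega_1}\bigl(c(x,\xi)-\varphi(x)-\psi(\xi)\bigr)\,\d\bar\pi(x,\xi) = 0.
\end{equation*}
Since the integrand is pointwise nonnegative (as $\varphi = \psi^{\overline c}$), it vanishes $\bar\pi$-a.e.\ and, at any such $(x,\xi)$, the infimum defining $\psi^{\overline c}(x)$ is attained at $\xi$; substituting back the definition of $\psi$ delivers the equality \eqref{eq:optimality_support_2} $\bar\pi$-almost everywhere.

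To upgrade this from ``almost every'' to ``every point of $\supp(\bar\pi)$'', I would observe that the defect $g(x,\xi) := c(x,\xi) + \tfrac{1}{\alpha}p(\xi) - \inf_{\eta\in\omega_1}\bigl(c(x,\eta) + \tfrac{1}{\alpha}p(\eta)\bigr)$ is continuous on $\omega_0\times\omega_1$ --- the infimum is continuous in $x$ by uniform continuity of $c$ on the compact product --- so $\{g=0\}$ is closed, has full $\bar\pi$-measure, and therefore contains $\supp(\bar\pi)$, which is \eqref{eq:optimality_support_2}.

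For the projection statements, the first step is to verify $\supp(\bar\pi) \subset \supp(u^0)\times\supp(\bar u)$: if $x\notin\supp(u^0)$, some open neighborhood $U$ of $x$ has $u^0(U)=0$, whence $\bar\pi(U\times\omega_1) = u^0(U) = 0$ by the marginal property, contradicting $(x,\xi)\in\supp(\bar\pi)$; the other coordinate is treated symmetrically. Given $\xi\in\supp(\bar u)$, a shrinking sequence of open neighborhoods $V_n$ of $\xi$ in $\omega_1$ satisfies $\bar u(V_n)>0$, hence $\bar\pi(\omega_0\times V_n)>0$; since the finite measure $\bar\pi$ is inner regular, \eqref{eq:prop_support} furnishes points $(x_n,\xi_n)\in\supp(\bar\pi)\cap(\omega_0\times V_n)$. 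Compactness of $\omega_0\times\omega_1$ together with the closedness of $\supp(\bar\pi)$ then yields a cluster point $(x,\xi)\in\supp(\bar\pi)$ whose second coordinate is the prescribed $\xi$, and the inclusion just proved forces $x\in\supp(u^0)$; the symmetric statement for $x\in\supp(u^0)$ is obtained in an identical manner. The one delicate point I anticipate is the continuity of the infimum inside $g$, which is exactly what allows the a.e.\ identity to be promoted to the entire support; everything else is routine bookkeeping with marginals and supports.
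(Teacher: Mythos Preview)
Your proof is correct and follows essentially the same approach as the paper: both use Kantorovich duality together with the continuity of the integrand to obtain the support inclusion, and both derive the projection statements from the marginal constraints via a compactness argument. The only cosmetic difference is that the paper packages the second part into an auxiliary lemma (\cref{lem:pisupp}, itself resting on \cref{lem:support_pushforward}), whereas you inline the same compactness argument directly.
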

\begin{proof}
    Let $(x, \xi) \in \supp(\bar\pi)$ be given.
    From \cref{thm:fon} we get that
    $(\psi^{\overline{c}}, \psi)$ is a solution of the dual Kantorovich problem,
    where we abbreviate $\psi := - p / \alpha |_{\omega_1}$.
    The celebrated Kantorovich duality yields
    \begin{align*}
        \int_{\omega_0 \times \omega_1} c(x,\xi) \, \d\bar\pi(x,\xi)
        &=
        \int_{\omega_0} \psi^{\overline{c}}(x) \,\d u^0(x)
        +
        \int_{\omega_1} \psi(\xi) \,\d \bar u(\xi)
        .
    \end{align*}
    Since the marginals of $\bar \pi$ are $u^0$ and $\bar u$, this yields
    \begin{equation*}
        \int_{\omega_0 \times \omega_1} \psi^{\overline{c}}(x) + \psi(\xi) - c(x,\xi) \, \d\bar\pi(x,\xi)
        =
        0.
    \end{equation*}
    The definition of the $\overline{c}$-conjugate
    implies that the integrand is non-positive.
    Since $(x,\xi) \in \supp(\bar\pi)$, the continuity of the involved functions
    implies
    \begin{equation*}
        \psi^{\overline{c}}(x) + \psi(\xi) - c(x,\xi)
        =
        0.
    \end{equation*}
    Using again the definition of the $\overline{c}$-conjugate
    shows \eqref{eq:optimality_support_2}.

    Now, let $\xi \in \supp(\bar u)$ be given
    and denote by $\bar\pi \in \KK_c(u^0, \bar u)$
    an optimal transport plan.
    From \cref{lem:pisupp} we get the existence
    of $x \in \supp(u^0)$
    such that $(x, \xi) \in \supp(\bar\pi)$.
    The first part of the proof yields \eqref{eq:optimality_support}.
    The last assertion follows \emph{mutatis mutandis}.
\end{proof}

In the following sections, we aim to deduce structural information on the optimal solution of \eqref{eq:optctrl}
that is inherent in the optimality system \eqref{eq:optsys}.  We distinguish between different settings depending on the 
transportation costs, the regularity of the prior $u^0$ and the objective $J$.


\section{Tracking type objective and boundedness of the optimal state}\label{sec:tracking}

We begin our investigations on the consequences of the optimality conditions in \cref{thm:fon} with the popular case 
of a tracking type objective of the form
\begin{equation}\label{eq:trackingOmega}
    J(y) \coloneqq \frac{1}{2} \, \|y - y_d\|_{L^2(\Omega)}^2
\end{equation}
with a given desired state $y_d \in L^2(\Omega)$. 
Note that, according to \cref{lem:pdeexist}, the state satisfies $y\in W^{1,q}_0(\Omega)$ and, by Sobolev embeddings, 
this ensures $y\in L^s(D)$ with $s =  d_1 q/(d_1 -q)$ 
and we have $s \ge 2$ due to the assumptions on $q$ in \cref{assu:standing}.
Consequently, $J \colon W_0^{1,q}(\Omega) \to \R$ is well defined, continuous and Gâteaux differentiable.
Note moreover that \eqref{eq:optctrl} becomes a convex problem in this case, 
cf.\ \cref{cor:convex}, so that we do not have to distinguish between local and global minimizers.
	In this section, we make use of the following assumption.

\begin{assumption}\label{assu:contcosts}
    We suppose that the transportation costs and the desired state satisfy $c\in C^2(\omega_0\times \omega_1)$ 
    and $y_d \in C(\omega_1)$.
    The functional $J$ is given by \eqref{eq:trackingOmega}.
\end{assumption}

In all what follows, let us consider an arbitrary optimal control $\bar u \in \frakM(\omega_1)$ with associated state $\bar y = S(\bar u)$. 
Note that, by \cref{cor:convex}, there is a unique optimal state $\bar y$, whereas the optimal control is only unique in $\Omega\cap \omega_1$.
We will show that we can use the Green potential associated with $\bar u$
as a representative of (the equivalence class) $\bar y$. 
A similar technique is also employed in \cite{PieperVexler2013}, but only concerning measures with compact support. For that reason, 
we present the underlying analysis in detail.
We start with the definition of the Green function, which is, as usual, defined by
\begin{equation}
    G_\Omega \colon \Omega \times \Omega \to [0,\infty], \quad G_\Omega(x, \xi) \coloneqq \Phi_x(\xi) - h_x(\xi), \quad x, \xi \in \Omega,
\end{equation}
where $\Phi_x$ is the fundamental solution with pole $x$ and $h_x$ is the greatest harmonic minorant of $\Phi_x$ on $\Omega$, 
which exists due to \cite[Theorem~3.6.3]{AG01}.
From
\cite[Theorem~4.1.9]{AG01},
we get that $G_\Omega$ is superharmonic on $\Omega \times \Omega$.
Consequently, it is lower semicontinuous and Borel measurable from $\Omega \times \Omega$
to $[0,\infty]$.

    The next two lemmas are of independent interest.
    Therefore, we state them independent of our standing assumption.
    It seems that the assertions of these lemmas are well known,
    but we were not able to find a suitable reference.
\begin{lemma}\label{lem:poissongreen}
    Let $\Omega \subset \R^d$, $d = 2,3$, be a bounded Lipschitz domain
    and fix $q \in (1, d/(d-1))$.
    Then, 
    for every $x\in \Omega$, the Green function $\Omega \ni \xi \mapsto G_\Omega(x, \xi) \in [0,\infty]$ is an element of $W^{1,q}_0(\Omega)$ and 
    solves the Poisson equation associated with $\delta_{x}$ in the weak sense, i.e., 
    \begin{equation}\label{eq:poissongreen}
        - \laplace G_\Omega(x, \cdot ) = E_\Omega^* \delta_{x} \quad \text{in } W^{-1,q}(\Omega),
    \end{equation}
    where $E_\Omega \colon W_0^{1,q'}(\Omega) \to C(\overline{\Omega})$
    is the Sobolev embedding.
\end{lemma}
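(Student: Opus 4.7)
The plan is to exploit the decomposition $G_\Omega(x,\cdot) = \Phi_x - h_x$ given by the definition and analyze the two summands separately. Fix $x \in \Omega$ throughout. Since $\|\nabla \Phi_x(\xi)\|$ is comparable to $\|\xi - x\|^{1-d}$, a polar-coordinate computation centered at $x$ gives $\int_\Omega \|\xi - x\|^{(1-d)q} \,\d\xi < \infty$ precisely when $q(d-1) < d$, i.e., $q < d/(d-1)$; the corresponding (easier) bound on $|\Phi_x|^q$ holds as well. Hence $\Phi_x \in W^{1,q}(\Omega)$.

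Next, I would identify $h_x$ with the variational $H^1$-Dirichlet extension of $\Phi_x|_{\partial\Omega}$. Because $x \in \Omega$, the fundamental solution is $C^\infty$ on an open neighborhood $U$ of $\partial\Omega$, so cutting off the singularity at $x$ by multiplying $\Phi_x$ with $1-\eta$ for some $\eta \in C^\infty_c(\Omega)$ with $\eta \equiv 1$ in a neighborhood of $x$ yields $\tilde\Phi \in C^\infty(\overline\Omega) \subset H^1(\Omega)$ agreeing with $\Phi_x$ near $\partial\Omega$. Lax--Milgram then produces a unique $\tilde h \in H^1(\Omega)$ with $\tilde h - \tilde\Phi \in H^1_0(\Omega)$ and $-\laplace \tilde h = 0$ weakly. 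Because $\Omega$ is a bounded Lipschitz domain, every boundary point is regular for the Dirichlet problem, so $\tilde h$ extends continuously to $\overline\Omega$ with $\tilde h|_{\partial\Omega} = \Phi_x|_{\partial\Omega}$; uniqueness of continuous harmonic functions with prescribed continuous boundary data then identifies $\tilde h$ with the greatest harmonic minorant $h_x$. Since $q \leq 2$ and $\Omega$ is bounded, this gives $h_x \in H^1(\Omega) \subset W^{1,q}(\Omega)$.

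Combining the two steps, $G_\Omega(x,\cdot) = \Phi_x - h_x \in W^{1,q}(\Omega)$, and by construction $\Phi_x$ and $h_x$ share the same trace on $\partial\Omega$, so the trace of $G_\Omega(x,\cdot)$ vanishes and $G_\Omega(x,\cdot) \in W^{1,q}_0(\Omega)$. For the PDE itself, $-\laplace \Phi_x = \delta_x$ on $\R^d$ and $-\laplace h_x = 0$ in $\mathcal{D}'(\Omega)$ combine to $-\laplace G_\Omega(x,\cdot) = \delta_x$ in $\mathcal{D}'(\Omega)$, i.e.\ $\int_\Omega \nabla G_\Omega(x,\cdot) \cdot \nabla \varphi \,\d \lambda^d = \varphi(x)$ for all $\varphi \in C^\infty_c(\Omega)$. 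Since $q' > d$, the embedding $W^{1,q'}_0(\Omega) \embed C(\overline\Omega)$ lets both sides extend continuously in the test function, and density of $C^\infty_c(\Omega)$ in $W^{1,q'}_0(\Omega)$ propagates the identity to all $v \in W^{1,q'}_0(\Omega)$, delivering $\int_\Omega \nabla G_\Omega(x,\cdot) \cdot \nabla v \,\d\lambda^d = v(x) = \dual{E_\Omega^* \delta_x}{v}$, which is \eqref{eq:poissongreen}.

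The main obstacle I anticipate is the second step: identifying the potential-theoretic object $h_x$ (defined as a greatest harmonic minorant) with the variational $H^1$-Dirichlet solution, and thereby obtaining $W^{1,q}$ regularity up to the boundary. This identification relies crucially on every boundary point of a Lipschitz domain being regular for the Dirichlet problem (so that the Perron--Wiener--Brelot solution coincides with the variational one) and on the uniqueness of continuous harmonic functions with prescribed continuous boundary data; these are precisely where the Lipschitz assumption enters in an essential way.
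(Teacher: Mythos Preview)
Your approach is valid but proceeds in the opposite direction from the paper. The paper first invokes Jerison--Kenig to produce the weak solution $w \in W^{1,q}_0(\Omega)$ of $-\laplace w = E_\Omega^*\delta_x$, then sets $h := \Phi_x - w$, uses Weyl's lemma and a comparison principle to see that $h$ is a harmonic minorant of $\Phi_x$, and finally shows $h = h_x$ via a boundary argument: a localization lemma gives that $w$ (hence $h$) is continuous near $\partial\Omega$, which forces $h$ and $h_x$ to agree on $\partial\Omega$ and then everywhere by the maximum principle. In short, the paper goes ``PDE solution $\Rightarrow$ Green function'', whereas you go ``Green function $\Rightarrow$ PDE solution'' by analyzing $\Phi_x$ and $h_x$ separately.

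The concern you flag is exactly the place where your argument needs one more ingredient. Your sentence ``uniqueness of continuous harmonic functions with prescribed continuous boundary data then identifies $\tilde h$ with $h_x$'' presupposes that $h_x$ itself is continuous on $\overline\Omega$ with boundary values $\Phi_x|_{\partial\Omega}$; this is what you have not established (you only showed it for $\tilde h$). The cleanest fix is to cite the classical fact that the Green function vanishes at every regular boundary point (e.g.\ \cite[Theorem~4.1.5]{AG01}); since Lipschitz domains are regular, $G_\Omega(x,\cdot)\to 0$ at $\partial\Omega$, hence $h_x = \Phi_x - G_\Omega(x,\cdot)$ is continuous on $\overline\Omega$ with the correct boundary values, and your uniqueness argument goes through. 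Alternatively, one can avoid this citation by showing directly that $\Phi_x - \tilde h$ is a nonnegative superharmonic function in $W^{1,q}_0(\Omega)$ with Riesz measure $\delta_x$, so that the Riesz decomposition forces $\Phi_x - \tilde h = G_\Omega(x,\cdot)$ and hence $\tilde h = h_x$; but this essentially reproduces the paper's route. Either way, once $\tilde h = h_x$ is secured, your trace argument can be streamlined: since $\eta\Phi_x$ has compact support and $\tilde h - \tilde\Phi \in H^1_0(\Omega) \subset W^{1,q}_0(\Omega)$, one has $G_\Omega(x,\cdot) = \eta\Phi_x - (\tilde h - \tilde\Phi) \in W^{1,q}_0(\Omega)$ without invoking the trace characterization of $W^{1,q}_0$.
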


\begin{proof}
    W.l.o.g.,
    we can assume that $q > 2 d /(d+2)$
    and $q > p_\Omega'$, where $p_\Omega > d$ is the exponent from
    \cite[Theorem~0.5(a)]{JK95}.
    Let us denote the solution of \eqref{eq:poissongreen} by $w \in W^{1,q}_0(\Omega)$, i.e., 
    \begin{equation*}
        - \laplace w= \delta_{x} \quad \text{in } W^{-1,q}(\Omega)
    \end{equation*}
    and define $h \coloneqq \Phi_x - w$. Note that this solution exists by \cref{lem:pdeexist}.
    Straightforward computation shows that $\Phi_x \in W^{1,q}(\Omega)$ and thus 
    $h \in W^{1,q}(\Omega)$. Moreover, since $\Phi_x$ satisfies Poisson's equation with right hand side $\delta_x$ in the distributional sense, we find
    \begin{equation*}
        \int_\Omega h \,\laplace \varphi\, \d \lambda^d = 0 \quad \forall\, \varphi\in C^\infty_c(\Omega)
        .
    \end{equation*}
    Weyl's lemma \cite[Proposition~2.14]{Ponce} yields that
    $h$ admits a harmonic representative (which we also denote by $h$),
    in particular, this yields $h \in C^\infty(\Omega)$. 
    Moreover, $w$ is non-negative by \cref{lem:comparison} and thus, 
    $h \leq \Phi_x$ such that $h$ is a harmonic minorant of $\Phi_x$.

    To conclude the claim, we prove that $h$ is the greatest harmonic minorant of $\Phi_x$ on $\Omega$. 
    For this purpose, first note that a classical localization argument, cf.\ \cref{lem:localreg} in the Appendix, implies that $w$ is continuous close to the boundary, 
    since $\dist(x, \partial\Omega) > 0$. For the same reason $\Phi_x$ is continuous close to $\partial\Omega$, too, and so is $h$.
    Now let $h_x$ be the greatest harmonic minorant of $\Phi_x$ and pick an arbitrary point $\xi \in \partial\Omega$ on the boundary and an arbitrary sequence 
    $\{\xi_n\} \subset \Omega$ converging to that point. Then the continuity of $\Phi_x$, $w$, and $h$ close to the boundary yields 
    \begin{equation*}
    \begin{aligned}
        h(\xi)
        & = \lim_{n\to \infty} h(\xi_n)
        \leq \liminf_{n\to\infty} h_x(\xi_n) \leq \limsup_{n\to\infty} h_x(\xi_n)
        \\&
        \leq \lim_{n\to\infty}  \Phi_x(\xi_n) =  \Phi_x(\xi)
        = \Phi_x(\xi) - w(\xi)
        = h(\xi) 
        ,
    \end{aligned}
    \end{equation*}
    where we used that $w$ satisfies zero Dirichlet boundary conditions, that $h$ is a harmonic minorant, 
    and that $h_x$ is the greatest harmonic minorant of $\Phi_x$.
    Therefore, $h_x$ is continuous at the boundary, too, and satisfies $h_x = h$ on $\partial\Omega$. The maximum principle for harmonic functions that 
    are continuous on $\overline{\Omega}$ thus implies $h_x = h$ in $\Omega$.
    Consequently, the function $G_\Omega(x, \cdot)$ is a representative of the equivalence class $w$.
\end{proof}

Given the Green function, the Green potential associated with $\bar u$ is
the function $\bar\yp \colon \Omega \to [0, \infty]$
defined by 
\begin{equation}\label{eq:greenpotential}
    \bar\yp(x) = \int_{\Omega \cap \omega_1} G_\Omega(x, \xi)\,\d \bar u(\xi).
\end{equation}
Note that the integral is well defined and $\bar\yp$ is Borel measurable
owing to Fubini's theorem and
to the Borel measurability of $G_\Omega$. The next lemma shows that $\bar\yp$ is just a representative of the equivalence class 
$\bar y$. 

\begin{lemma}\label{lem:sobolevgreen}
    Let $\Omega \subset \R^d$, $d = 2,3$, be a bounded Lipschitz domain.
        We denote by $p_\Omega > d$
        the exponent from \cite[Theorem~0.5(a)]{JK95}
        and fix $q \in (p_\Omega' , d/(d-1))$.
    Further, let $\mu \in \frakM(\overline\Omega)$ be a non-negative Borel measure
    and let $w \in W^{1,q}_0(\Omega)$ be the solution of
    \begin{equation}
        \label{E5.5}
        - \laplace w = E_\Omega^* \mu \quad \text{in } W^{-1,q}(\Omega).
    \end{equation}
    Moreover, denote the (pointwise defined) Green potential of $\mu$ by $\wp$, i.e.,
    \begin{equation}
        \label{E5.6}
        \wp(x) \coloneqq \int_\Omega G_\Omega(x, \xi) \,\d\mu(\xi) \in [0,\infty]
        \quad \forall\, x\in \Omega.
    \end{equation}
    Then the function $\wp$ is a representative of the equivalence class $w$.
\end{lemma}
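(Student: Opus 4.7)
My plan is to verify the representation formula via the weak duality identity
\begin{equation*}
    \int_\Omega w \, f \, \d\lambda^d = \int_\Omega \wp \, f \, \d\lambda^d
    \qquad \text{for every } f \in C_c^\infty(\Omega),
\end{equation*}
which then forces $w = \wp$ almost everywhere on $\Omega$ by a standard fundamental-lemma argument (applied once $\wp \in L^1_{\mathrm{loc}}(\Omega)$ has been extracted from the identity with non-negative $f$ combined with the non-negativity of $\wp$). For such an $f$, the auxiliary problem $-\laplace v = f$ admits a unique solution $v \in W_0^{1,q'}(\Omega)$, because the hypotheses $q > p_\Omega'$ and $q < d/(d-1)$ yield $q' \in (p_\Omega', p_\Omega)$ with $q' > d$; in particular, the Sobolev embedding provides a continuous representative of $v$ that vanishes on $\partial\Omega$.

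Testing \eqref{E5.5} against $v$ and using that $-\laplace v = f \in L^{q'}(\Omega)$ lets me evaluate the symmetric bilinear form $\int_\Omega \nabla w \cdot \nabla v \, \d\lambda^d$ in two ways, producing
\begin{equation*}
    \int_\Omega w \, f \, \d\lambda^d
    = \int_{\overline\Omega} v \, \d\mu
    = \int_\Omega v(\xi) \, \d\mu(\xi),
\end{equation*}
where the boundary values of $v$ are used to discard any mass of $\mu$ on $\partial\Omega$. To rewrite $v(\xi)$ in terms of the Green function I would fix $\xi \in \Omega$ and run the same duality between $G_\Omega(\xi,\cdot) \in W_0^{1,q}(\Omega)$, which solves $-\laplace G_\Omega(\xi,\cdot) = E_\Omega^* \delta_\xi$ by \cref{lem:poissongreen}, and $v$. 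This yields the Green representation
\begin{equation*}
    v(\xi) = \int_\Omega G_\Omega(\xi, x) \, f(x) \, \d\lambda^d(x) .
\end{equation*}
Combining this with the classical symmetry $G_\Omega(\xi, x) = G_\Omega(x, \xi)$ of the Green function (see, e.g., \cite{AG01}) and a Fubini exchange identifies $\int_\Omega v(\xi) \, \d\mu(\xi)$ with $\int_\Omega f(x) \, \wp(x) \, \d\lambda^d(x)$, closing the duality chain.

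The delicate point I anticipate is justifying the Fubini exchange, because $G_\Omega$ is singular on the diagonal. My plan is to first establish the identity for non-negative $f$, where Tonelli's theorem applies unconditionally due to the non-negativity of $G_\Omega$, and where finiteness of the iterated integral follows from its equality with $\int_\Omega w \, f \, \d\lambda^d < \infty$. The general case is then obtained by decomposing $f = f_+ - f_-$. All remaining ingredients, namely the positivity of $G_\Omega$ and $w$ (via \cref{lem:comparison}), continuity of $v$, the symmetric $W_0^{1,q}$/$W_0^{1,q'}$-integration by parts, and the passage from the test-function identity to a pointwise representative up to a null set, are standard.
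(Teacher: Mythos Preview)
Your duality argument is correct and complete in spirit, but it differs genuinely from the paper's route. The paper avoids test functions altogether: it observes that $x \mapsto (-\laplace)^{-1} E_\Omega^* \delta_x$ is a continuous map from $\overline\Omega$ into $W_0^{1,q}(\Omega)$, forms the Bochner integral $w = \int_\Omega g_\Omega(x)\,\d\mu(x)$ in $W_0^{1,q}(\Omega)$, and then checks that $\wp$ agrees with $w$ on every Borel set by commuting the Bochner integral with $\int_B (\cdot)\,\d\lambda^d$ and applying Fubini. Your approach instead pairs $w$ with the solution $v$ of an adjoint Poisson problem, represents $v(\xi)$ via the Green function using \cref{lem:poissongreen}, and invokes the symmetry $G_\Omega(\xi,x)=G_\Omega(x,\xi)$. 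The trade-off is that you need the symmetry of $G_\Omega$ as an external input, whereas the paper needs the machinery of Bochner integration; in exchange, your argument is closer to classical potential theory and never leaves scalar integrals.

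One small technical point: when you write ``decomposing $f = f_+ - f_-$'', note that $f_\pm$ are not in $C_c^\infty(\Omega)$, so the auxiliary problem for $v$ is not immediately available for them. The cleanest fix is the one you already hint at: first run the argument for non-negative $f \in C_c^\infty(\Omega)$ via Tonelli to extract $\wp \in L^1_{\mathrm{loc}}(\Omega)$; then for a signed $f \in C_c^\infty(\Omega)$ observe that $\int_\Omega\int_\Omega G_\Omega(\xi,x)\,|f(x)|\,\d\lambda^d(x)\,\d\mu(\xi) \le \|f\|_\infty \int_{\supp f} \wp\,\d\lambda^d < \infty$, so Fubini applies directly without any decomposition. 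With that adjustment the proof is rigorous.
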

\begin{proof}
    From the Sobolev embedding and $q' > d$,
    we get that $W_0^{1,q'}(\Omega)$ embeds into the Hölder space
    $C_0^{0,\alpha}(\Omega)$ for some $\alpha > 0$.
    Consequently,
    the map $\overline\Omega \ni x \mapsto E_\Omega^* \delta_x \in W^{-1,q}(\Omega)$ is Hölder continuous.
    Together with the solution map $(-\laplace)^{-1} \colon W^{-1,q}(\Omega) \to W_0^{1,q}(\Omega)$
    from of Poisson's equation, we obtain a function
    \begin{equation*}
        g_\Omega \in C( \overline\Omega , W_0^{1,q}(\Omega) ),
        \qquad
        g_\Omega(x) := (-\laplace)^{-1} E_\Omega^* \delta_x
        .
    \end{equation*}
    From \cref{lem:poissongreen}, we get that
    $G_\Omega(x,\cdot)$ is a representative of $g_\Omega(x)$ for every $x \in \Omega$.
    Moreover, for $x \in \partial\Omega$, we have $g_\Omega(x) = 0$, since $E_\Omega^* \delta_x = 0$ in this case.

    Since $g_\Omega$ is continuous, it is Bochner measurable from $\overline\Omega$
    into $W_0^{1,q}(\Omega)$.
    In particular, we can define
    \begin{equation*}
        w := \int_{\Omega} g_\Omega(x) \d\mu(x) \in W_0^{1,q}(\Omega).
    \end{equation*}
    Since the Bochner integral commutes with bounded linear operators, we obtain
    \begin{equation*}
        -\laplace w
        =
        \int_{\Omega} (-\laplace g_\Omega)(x) \d\mu(x)
        =
        \int_{\Omega} E_\Omega^* \delta_x \d\mu(x)
        =
        E_\Omega^* \mu
        ,
    \end{equation*}
    where the last equation follows from testing with functions in $W_0^{1,q'}(\Omega)$.
    This shows that $w$ is the solution of \eqref{E5.5}.
    Further, \cref{lem:pdeexist} implies uniqueness,
    i.e., $w$ is the unique solution.

    In order to check that $\wp$ is a representative of $w$,
    we take an arbitrary Borel set $B \subset \Omega$.
    Then, since $G_\Omega(x,\cdot)$ is a representative of $g_\Omega(x)$, it holds that
    \begin{align*}
        \int_B w(\xi) \,\d\lambda^{d}(\xi)
        &=
        \int_{\Omega} \int_B [g_\Omega(x)](\xi) \, \d\lambda^{d}(\xi) \, \d\mu(x)
        \\&
        =
        \int_{\Omega} \int_B G_\Omega(x,\xi) \, \d\lambda^{d}(\xi) \, \d\mu(x)
        \\&
        =
        \int_B \int_{\Omega} G_\Omega(x,\xi) \, \d\mu(x) \, \d\lambda^{d}(\xi)
        =
        \int_B \wp(\xi) \, \d\lambda^{d}(\xi)
        .
    \end{align*}
    Note that the first equality again uses that the Bochner integral commutes with bounded linear operators
    and the second-to-last equality uses again Fubini's theorem.
    Since the Borel set $B$ was arbitrary, this shows the claim.
\end{proof}

\begin{remark}
    \label{rem:higher_dimensions}
    It is possible to generalize \cref{lem:sobolevgreen} to higher dimensions.
    This needs two ingredients.
    First, one has to use a substitute for
    \cite[Theorem~0.5(a)]{JK95}
    which implies that $-\laplace \colon W_0^{1,q}(\Omega) \to W^{-1,q}(\Omega)$
    is invertible for some $q < d / (d-1)$.
    Moreover, one has to generalize \cref{lem:localreg}
    to higher dimensions.
    Both of these results might need some higher regularity of the boundary $\partial\Omega$.
\end{remark}

\cref{lem:sobolevgreen} implies that the Green potential $\bar\yp$ from \eqref{eq:greenpotential} is indeed a representative of the optimal state $\bar y$. 
By \cite[Theorem~3.3.1]{AG01}, $\bar\yp$ is superharmonic and thus lower semicontinuous on $\Omega$.
This can be used to prove the following:

\begin{lemma}\label{lem:yboundsupp}
    Under \cref{assu:contcosts} there holds that 
    \begin{equation}
        \label{Gleichung_5_6}
        \bar\yp(\xi) \leq y_d(\xi) + \alpha\, \max_{x\in \omega_0} |\laplace c(x,\xi)| \quad \forall\, \xi \in  \supp(\bar u) \cap \interior(\omega_1)
        ,
    \end{equation}
    where the Laplacian acts on the second argument of $c$.
\end{lemma}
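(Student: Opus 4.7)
The strategy is to turn the interior-minimizer structure of $c(x,\cdot)+\tfrac1\alpha p$ provided by \cref{lem:support_plan} into a one-sided sign condition for the distributional Laplacian of this function at $\xi$, and then to convert it via the adjoint equation $-\laplace p = \bar y - y_d$ into the desired bound on $\bar\yp(\xi)$. Fix $\xi \in \supp(\bar u)\cap\interior(\omega_1)$. Since $\omega_1 \subset \overline{\Omega}$ and $\Omega$ is open, $\interior(\omega_1) \subset \Omega$, so I can pick $r_0 > 0$ with $\overline{B_{r_0}(\xi)} \subset \Omega \cap \interior(\omega_1)$. By \cref{lem:support_plan} there exists $x \in \omega_0$ such that $g := c(x,\cdot) + \tfrac{1}{\alpha}p$ attains its global minimum over $\omega_1$ at $\xi$; in particular $g(\xi) \leq g(y)$ for every $y \in \overline{B_r(\xi)}$ and every $r \in (0, r_0]$.

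\emph{Green--Poisson representation.} The function $g$ is continuous on $\overline{B_r(\xi)}$ (by $c\in C^2$ and $p\in W_0^{1,q'}(\Omega)\embed C(\overline\Omega)$) and its distributional Laplacian is
\[
-\laplace g \;=\; -\laplace_\eta c(x,\cdot) + \tfrac1\alpha(\bar y - y_d) \;\in\; L^s(B_r(\xi))
\]
for some $s\ge 2$, using \eqref{eq:adjpde} and the Sobolev embedding of $W_0^{1,q}(\Omega)$. The classical Green--Poisson formula at the centre of $B_r(\xi)$ therefore yields
\[
g(\xi)\;=\;\frac{1}{\HH^{d_1-1}(\partial B_r(\xi))}\int_{\partial B_r(\xi)}\!g\,\d\HH^{d_1-1}\;+\;\int_{B_r(\xi)}G_{B_r}(\xi,y)\,(-\laplace g(y))\,\d\lambda^{d_1}(y),
\]
where $G_{B_r}(\xi,\cdot)\ge 0$ denotes the Green function of the ball. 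Since $\xi$ minimizes $g$ on $\overline{B_r(\xi)}$, the spherical mean is $\geq g(\xi)$, so the Green integral on the right is non-positive. Substituting the expression for $-\laplace g$ gives the key inequality
\[
\int_{B_r(\xi)} G_{B_r}(\xi,y)\Bigl(\tfrac1\alpha\bigl(\bar y(y)-y_d(y)\bigr)-\laplace_\eta c(x,y)\Bigr)\,\d\lambda^{d_1}(y)\;\le\; 0.
\]

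\emph{Passage to the limit.} Divide by $V(r):=\int_{B_r(\xi)}G_{B_r}(\xi,y)\,\d\lambda^{d_1}(y) \in (0,\infty)$. Because $G_{B_r}(\xi,\cdot)/V(r)$ is a probability density concentrating at $\xi$, the weighted averages of the continuous functions $\laplace_\eta c(x,\cdot)$ and $y_d$ tend to $\laplace_\eta c(x,\xi)$ and $y_d(\xi)$, respectively. For $\bar y$, I pass to the Green-potential (lower semicontinuous) representative $\bar\yp$ from \cref{lem:sobolevgreen}; lower semicontinuity yields
\[
\liminf_{r\to 0^+}\frac1{V(r)}\int_{B_r(\xi)}G_{B_r}(\xi,y)\,\bar y(y)\,\d\lambda^{d_1}(y)\;\ge\;\bar\yp(\xi),
\]
where the a priori possibility $\bar\yp(\xi)=\infty$ is excluded \emph{a posteriori} by the resulting contradiction with the right-hand side. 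Passing to the limit in the key inequality produces
\[
\tfrac1\alpha\bigl(\bar\yp(\xi)-y_d(\xi)\bigr)\;\leq\;\laplace_\eta c(x,\xi)\;\leq\;\max_{x'\in\omega_0}|\laplace c(x',\xi)|,
\]
which is \eqref{Gleichung_5_6}.

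\emph{Main obstacle.} The delicate point is the rigorous justification of the Green--Poisson representation for $p$ on the ball: since $p$ is merely a weak solution with an $L^s$ right-hand side (and not $C^2$), one has to invoke the representation for Poisson's equation with an integrable source and verify the integrability $G_{B_r}(\xi,\cdot)(\bar y-y_d)\in L^1(B_r(\xi))$, which is routine given $G_{B_r}(\xi,y)=O(|y-\xi|^{2-d_1})$ for $d_1=3$ (respectively logarithmic for $d_1=2$) together with the integrability of $\bar y$.
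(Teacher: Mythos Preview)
Your argument is correct but follows a different route than the paper's. The paper argues by contradiction: assuming \eqref{Gleichung_5_6} fails at some $\xi$, the lower semicontinuity of $\bar\yp$ (together with the continuity of $y_d$ and $\laplace_\eta c$) propagates the strict inequality $-\laplace_\eta c(x_\xi,\cdot)+\tfrac1\alpha(\bar\yp-y_d)>0$ to a whole ball $B_r(\xi)\subset\omega_1$. This means the function $z=c(x_\xi,\cdot)+\tfrac1\alpha p$ satisfies $-\laplace z>0$ distributionally on $B_r(\xi)$, and the strong maximum principle (\cite[Theorem~8.19]{GT01}) then forces the minimum of the continuous function $z$ to lie on $\partial B_r(\xi)$, contradicting the interior-minimizer property from \cref{lem:support_plan}. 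Your approach replaces the maximum-principle step by the Green--Poisson identity at the center of $B_r(\xi)$ and a direct limit $r\to 0$, using instead that the probability measures $G_{B_r}(\xi,\cdot)/V(r)\,\d\lambda^{d_1}$ concentrate at $\xi$ and that $\bar\yp$ is lower semicontinuous (Portmanteau-type inequality). Both proofs hinge on exactly the same two ingredients---the interior minimization from \cref{lem:support_plan} and the lower semicontinuity of the Green-potential representative $\bar\yp$---so the difference is one of packaging: the paper quotes a black-box maximum principle, while your argument unwraps the underlying mean-value/representation machinery and avoids the contradiction setup. The price you pay is the technical verification of the representation formula for $p$ (which you flag); on a ball this follows from $-\laplace g=\bar y - y_d - \laplace_\eta c(x,\cdot)\in L^2(B_r(\xi))$, elliptic regularity giving the Newtonian potential in $H^2\cap H_0^1\hookrightarrow C$, and uniqueness of the harmonic Dirichlet extension.
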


\begin{proof}
    We argue by contradiction and assume that there is a point $\xi \in \supp(\bar u) \cap \interior(\omega_1)$ such that 
    \begin{equation}\label{eq:yboundcontra}
        - \laplace c(x_\xi, \xi) + \frac{1}{\alpha} \big(\bar\yp(\xi) - y_d(\xi)\big) > 0,
    \end{equation}
    where $x_\xi \in \supp(u^0)$ is the point given by \cref{lem:support_plan}.
    Then, by lower semicontinuity of $\bar\yp$, \cref{assu:contcosts}, and $\xi \in \interior(\omega_1)$, there exists $r > 0$ such that
    \begin{equation}\label{eq:laplacewonball}
        - \laplace c(x_\xi, \eta) + \frac{1}{\alpha} \big(\bar\yp(\eta) - y_d(\eta)\big) > 0 \quad \forall \, \eta \in B_r(\xi) \subset \omega_1.
    \end{equation}
    Moreover, \eqref{eq:optimality_support} implies that
    the function $z \in H^1(B_r(\xi)) \cap C(\overline{B_r(\xi)})$, defined by 
    \begin{equation*}
        z: B_r(\xi) \ni \eta \mapsto c(x_\xi, \eta ) + \frac1\alpha p(\eta)
    \end{equation*}
    attains its minimum at $\xi$. Furthermore, the adjoint equation \eqref{eq:adjpde} along with \eqref{eq:trackingOmega} implies
    \begin{equation*}
        - \laplace z = - \laplace c(x_\xi, \cdot ) - \frac{1}{\alpha} \, \laplace p  =  - \laplace c(x_\xi, \cdot ) + \frac{1}{\alpha} (\bar\yp - y_d) 
        \quad \text{in } H^{-1}(B_r(\xi)). 
    \end{equation*}
    Hence, in view of \eqref{eq:laplacewonball}, we obtain (in the distributional sense) 
    \begin{equation}\label{eq:laplacewonball2}
        - \laplace z > 0 \quad \text{in } B_r(\xi)
    \end{equation}
    and consequently, by the strong maximum principle, cf., e.g., \cite[Theorem~8.19]{GT01}, and the continuity of $z$, there holds
    \begin{equation*}
        \inf_{\eta \in B_r(\xi)} z(\eta) = \inf_{\eta \in \partial B_r(\xi)} z(\eta).
    \end{equation*}
    Due to \eqref{eq:laplacewonball2}, $z$ cannot be constant on $B_r(\xi)$, and thus, the infimum of $z$ is not attained in the open ball $B_r(\xi)$ contradicting \eqref{eq:optimality_support}.
    Hence, \eqref{eq:yboundcontra} is wrong, which in turn implies the assertion.
\end{proof}

\begin{corollary}\label{cor:nodiracinterior}
    Let \cref{assu:contcosts} be fulfilled. 
    Then, every optimal control $\bar u$ is non-atomic in $\interior(\omega_1)$. 
\end{corollary}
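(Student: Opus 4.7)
The plan is to argue by contradiction: suppose that some optimal control $\bar u$ has an atom at a point $\xi_0 \in \interior(\omega_1)$, i.e., $m := \bar u(\{\xi_0\}) > 0$. Then, by the definition of the support, $\xi_0 \in \supp(\bar u)$, and since $\omega_1 \subset \overline\Omega$ and $\interior(\omega_1)$ lies in the interior of $\overline\Omega$ (which coincides with $\Omega$ for a Lipschitz domain), we have $\xi_0 \in \Omega \cap \omega_1$. The strategy is to derive a pointwise blow-up of the Green potential $\bar\yp$ at $\xi_0$, contradicting the finite upper bound from \cref{lem:yboundsupp}.

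For the blow-up, I would use the representation \eqref{eq:greenpotential}. Since $G_\Omega(\xi_0, \cdot) \geq 0$ and $\bar u$ charges $\{\xi_0\}$ with mass $m$, monotonicity of the integral gives
\begin{equation*}
    \bar\yp(\xi_0)
    = \int_{\Omega \cap \omega_1} G_\Omega(\xi_0, \xi) \,\d \bar u(\xi)
    \geq G_\Omega(\xi_0, \xi_0) \, m.
\end{equation*}
The key observation is that $G_\Omega(\xi_0,\xi_0) = +\infty$: indeed, $G_\Omega(\xi_0,\xi) = \Phi_{\xi_0}(\xi) - h_{\xi_0}(\xi)$, the fundamental solution $\Phi_{\xi_0}$ has a logarithmic ($d_1 = 2$) or $1/|\cdot - \xi_0|$-type ($d_1 = 3$) singularity at $\xi_0$, while its greatest harmonic minorant $h_{\xi_0}$ is finite (indeed smooth) at the interior point $\xi_0$. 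Hence $\bar\yp(\xi_0) = +\infty$.

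On the other hand, $\xi_0 \in \supp(\bar u) \cap \interior(\omega_1)$, so \cref{lem:yboundsupp} is applicable and yields
\begin{equation*}
    \bar\yp(\xi_0) \leq y_d(\xi_0) + \alpha \, \max_{x \in \omega_0} |\laplace c(x,\xi_0)|,
\end{equation*}
which is finite by \cref{assu:contcosts} (continuity of $y_d$ on $\omega_1$ and $c \in C^2(\omega_0 \times \omega_1)$ together with compactness of $\omega_0$). This contradicts $\bar\yp(\xi_0) = +\infty$, so no such atom can exist.

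The proof is essentially a one-line application of \cref{lem:yboundsupp} combined with the singularity of the Green function; the only point requiring mild care is to justify that $G_\Omega(\xi_0,\xi_0) = +\infty$, which follows directly from the construction of $G_\Omega$ via $\Phi_{\xi_0} - h_{\xi_0}$ recorded just before \cref{lem:poissongreen} and the interior smoothness of the harmonic minorant $h_{\xi_0}$. I do not foresee a genuine obstacle.
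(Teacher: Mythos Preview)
Your argument is correct and matches the paper's proof: both assume an atom at $\xi_0\in\interior(\omega_1)$, use nonnegativity of the Green function and of $\bar u$ to bound $\bar\yp(\xi_0)\ge \bar u(\{\xi_0\})\,G_\Omega(\xi_0,\xi_0)=\infty$, and contradict the finite bound from \cref{lem:yboundsupp}. The only cosmetic difference is that the paper phrases the lower bound via the decomposition $\bar u = \bar u(\{\xi_0\})\delta_{\xi_0} + (\bar u - \bar u(\{\xi_0\})\delta_{\xi_0})$ rather than restricting the domain of integration, which is equivalent.
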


\begin{proof}
    Suppose by contrary that $\bar u$ has an atom $a \in \BB(\omega_1)$ in $\interior(\omega_1)$. 
    By \cite[Section~2.1.6, Theorem~2]{kadets}, every atom of $\bar u$ is a singleton, so that $a$ can be identified with a single point with $\bar u(a) > 0$. 
    Then, $\bar u - \bar u(a) \delta_a \geq 0$ along with the non-negativity of the Green function gives $\bar\yp(x) - \wp(x) \geq 0$ 
    for all $x\in \Omega$, where, as before, $\bar\yp$ and $\wp$ are the Green potentials of $\bar u$ and $\bar u(a) \delta_a$, respectively. 
    This however implies
    \begin{equation*}
        \bar\yp(a)  \geq \wp(a) = \bar u(a) \int_\Omega G_\Omega(a, \xi) \,\d\delta_{a}(\xi) = \bar u(a) \,G_\Omega(a,a) =  \infty
    \end{equation*}
    in contradiction to \cref{lem:yboundsupp}.
\end{proof}

\begin{remark}
    The result of \cref{cor:nodiracinterior} is remarkable: in the setting of \cref{assu:contcosts}, 
    even if $\omega_1 = \omega_0$ and the prior $u^0$ just consists of 
    Dirac measures in the interior of $\omega_1$, an optimal control can only have Diracs on the boundary of $\omega_1$,
    no matter how large the Tikhonov parameter is chosen.
    Moreover, if $\bar u$ contains a Dirac measure at $x \in \partial\omega_1$,
    it induces a singularity of $\bar\yp$ at $x$.
    Consequently, the inequality in \eqref{Gleichung_5_6} is violated for all $\xi$
    in a neighborhood $N(x)$ of $x$ and, consequently,
    $N(x) \cap \interior(\omega_1)$ cannot intersect $\supp(\bar u)$.
\end{remark}

In order to show that the bound on the optimal state $\bar y$ on $\supp(\bar u) \cap \interior(\omega_1)$ from \cref{lem:yboundsupp} 
transfers to the entire domain, we need the following result, see \cite[Theorem~5.4.8]{Helms}.

\begin{theorem}[Maria--Frostman domination principle]\label{lem:mariaforstmann}
    Let $\mu \in \frakM(\overline\Omega)$ be a non-negative measure
    and denote the associated Green potential again by $\wp$,
    see \eqref{E5.6}.
    If there exists a constant $M > 0$ such that $\wp(x) \leq M$ for all $x\in \supp(\mu) \cap \Omega$,
    then $\wp(x) \leq M$ for all $x\in \Omega$.
\end{theorem}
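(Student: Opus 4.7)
The plan is to run the classical Maria--Frostman argument: the open super-level set of $\wp$ above $M$ is disjoint from $\supp(\mu)$, hence $\wp$ is harmonic there, its boundary values on this set are dominated by $M$, and a maximum principle yields a contradiction.

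First I would record that $\wp$ is superharmonic on $\Omega$ (as already used above via \cite[Theorem~3.3.1]{AG01}); in particular it is lower semicontinuous, so the set
\begin{equation*}
    U := \set{ x \in \Omega \given \wp(x) > M }
\end{equation*}
is open. Since each kernel $G_\Omega(\cdot,\xi)$ is harmonic on $\Omega \setminus \{\xi\}$, differentiation under the integral (justified on compact subsets of $\Omega\setminus\supp(\mu)$) shows that $\wp$ is harmonic on $\Omega \setminus \supp(\mu)$. By the hypothesis $\wp \leq M$ on $\supp(\mu)\cap\Omega$ we have $U \cap \supp(\mu) \cap \Omega = \emptyset$, so $\wp$ is in fact harmonic on $U$. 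Assuming $U \neq \emptyset$, I would aim for a contradiction by bounding $\wp$ on $U$ via its boundary.

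For the boundary behaviour: at any $x_0 \in \partial U \cap \Omega$ one has $x_0 \notin U$, hence $\wp(x_0) \leq M$, and the LSC of $\wp$ then yields $\limsup_{U \ni x \to x_0} \wp(x) \leq M$ via $M - \wp$ being upper semicontinuous and nonnegative at $x_0$. For $x_0 \in \partial U \cap \partial\Omega$, every such point is regular for the Dirichlet problem on the Lipschitz domain $\Omega$, and the representation $G_\Omega(x,\xi) = \Phi_x(\xi) - h_x(\xi)$ with $h_x$ the greatest harmonic minorant gives $\lim_{x \to x_0} G_\Omega(x,\xi) = 0$ for every $\xi \in \Omega$; dominated convergence (on a neighborhood of $x_0$ in $U$, where $\wp$ is harmonic and hence finite and locally bounded) then yields $\lim_{U \ni x \to x_0} \wp(x) = 0 \leq M$. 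The maximum principle applied to the bounded harmonic function $M - \wp$ on each connected component of $U$ thus forces $\wp \leq M$ on $U$, contradicting the definition of $U$.

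The main obstacle is precisely the control of $\wp$ at $\partial U \cap \partial\Omega$: one has to transfer the boundary behaviour of $G_\Omega(\cdot,\xi)$ through the Green integral against a general non-negative Borel measure $\mu$ on a merely Lipschitz domain, which requires an integrable majorant for the family $\{G_\Omega(x,\xi)\}_{x}$ near $x_0$. This can be handled by restricting attention to a neighborhood of $x_0$ in which the measure $\mu$ is bounded away (recall $x_0 \in \partial\Omega$ and $\mu\mres\partial\Omega$ does not contribute to $\wp$ on $\Omega$), or by invoking the domination directly via \cite[Theorem~5.4.8]{Helms}.
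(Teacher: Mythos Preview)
The paper does not prove this statement at all; it simply records it as \cite[Theorem~5.4.8]{Helms} and uses it as a black box. So there is no paper proof to compare against, and your final fallback of invoking Helms is exactly what the authors do.

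Your sketch, however, contains a genuine gap at the step where you claim that lower semicontinuity of $\wp$ gives $\limsup_{U \ni x \to x_0} \wp(x) \leq M$ for $x_0 \in \partial U \cap \Omega$. Lower semicontinuity (equivalently, upper semicontinuity of $M - \wp$) yields the inequality in the \emph{opposite} direction: $\liminf_{x\to x_0}\wp(x)\geq \wp(x_0)$, not a bound on the limsup. At points $x_0 \in \partial U \cap \Omega \setminus \supp(\mu)$ your conclusion is fine because $\wp$ is harmonic, hence continuous, near $x_0$; but at points $x_0 \in \partial U \cap \supp(\mu) \cap \Omega$ you only know $\wp(x_0)\leq M$ from the hypothesis, and there is no semicontinuity argument that controls $\limsup_{U\ni x\to x_0}\wp(x)$ from above. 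This is precisely the non-trivial content of the Maria--Frostman principle: potentials need not be continuous on the support of their measure, so the naive maximum-principle route breaks down exactly there. Standard proofs get around this either via the Riesz decomposition of $\min(\wp,M)$ (showing it is again a potential and comparing Riesz measures) or via an energy/capacity argument; neither reduces to the boundary-limsup step you wrote.

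You correctly flag the boundary behavior at $\partial\Omega$ as delicate, but the more serious obstacle is the one above, which your sketch treats as routine. If you want a self-contained argument rather than the citation, that is the step that needs real work.
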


\begin{theorem}\label{thm:statebound}
    In addition to \cref{assu:contcosts}, we assume
    $\omega_1 = \overline{\Omega}$.
    Then the optimal state 
    $\bar y$ is a function in $L^\infty(\Omega) \cap H_0^1(\Omega)$ and satisfies
    \begin{align*}
        \norm{\bar y}_{L^\infty(\Omega)}
        &\leq
        \norm{y_d}_{C(\overline{\Omega})} + \alpha\, \norm{\laplace c}_{C(\omega_0 \times \overline{\Omega})} 
        ,
        \\
        \norm{\bar y}_{H_0^1(\Omega)}^2
        &\le
        \norm{\bar y}_{L^\infty(\Omega)}
        \norm{\bar u}_{\frakM(\Omega)}
        ,
    \end{align*}
    where the Laplacian acts on the second argument of $c$.
    Consequently, every optimal control $\bar u$ induces an element of $H^{-1}(\Omega)$
    and does not charge subsets of $\Omega$ of $H_0^1(\Omega)$-capacity zero.
\end{theorem}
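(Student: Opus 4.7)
The proof has three steps, matching the three conclusions.

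\emph{Step 1 (the $L^\infty$-bound).} Since $\omega_1 = \overline{\Omega}$ and every $v \in W_0^{1,q'}(\Omega)$ vanishes on $\partial\Omega$, the mass of $\bar u$ on $\partial\Omega$ does not contribute to the state, and by \cref{lem:sobolevgreen} the Green potential
\begin{equation*}
    \bar\yp(x) = \int_{\Omega} G_\Omega(x,\xi) \,\d \bar u(\xi)
\end{equation*}
is a representative of $\bar y$. By \cref{cor:nodiracinterior}, $\bar u$ has no atoms in $\interior(\omega_1) = \Omega$; in particular every $\xi \in \supp(\bar u\mres\Omega) \cap \Omega$ lies in $\supp(\bar u) \cap \interior(\omega_1)$, so \cref{lem:yboundsupp} yields $\bar\yp(\xi) \leq M := \|y_d\|_{C(\overline\Omega)} + \alpha\,\|\laplace c\|_{C(\omega_0 \times \overline\Omega)}$. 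The Maria--Frostman domination principle (\cref{lem:mariaforstmann}), applied to $\mu = \bar u\mres\Omega$, then propagates this bound to all of $\Omega$.

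\emph{Step 2 (the $H_0^1$-bound).} I approximate $\bar u$ by smooth data. Fix compact sets $K_n \Subset \Omega$ with $K_n \uparrow \Omega$ and radial mollifiers $\rho_{\varepsilon_n}$ with $\varepsilon_n < \dist(K_n,\partial\Omega)$, and set $\mu_n := \rho_{\varepsilon_n} * (\bar u \mres K_n) \in C_c^\infty(\Omega)$. Then $\mu_n \geq 0$, $\|\mu_n\|_{L^1(\Omega)} \leq |\bar u|(\Omega)$, and $\mu_n \weak^* \bar u \mres \Omega$ in $\frakM(\overline\Omega)$. Let $y_n \in H_0^1(\Omega)$ solve $-\laplace y_n = \mu_n$. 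Fubini's theorem yields
\begin{equation*}
    y_n(x) = \int_\Omega (\rho_{\varepsilon_n} *_\xi G_\Omega(x,\cdot))(\eta) \,\d(\bar u \mres K_n)(\eta),
\end{equation*}
and since $G_\Omega(x,\cdot)$ is superharmonic on $\Omega$ with $B_{\varepsilon_n}(\eta) \subset \Omega$ for $\eta \in K_n$, the mean-value inequality gives $(\rho_{\varepsilon_n} *_\xi G_\Omega(x,\cdot))(\eta) \leq G_\Omega(x,\eta)$. Consequently $0 \leq y_n(x) \leq \bar\yp(x)$ pointwise on $\Omega$, so Step~1 yields $\|y_n\|_{L^\infty(\Omega)} \leq \|\bar y\|_{L^\infty(\Omega)}$. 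Testing $-\laplace y_n = \mu_n$ with $y_n$ itself produces
\begin{equation*}
    \|\nabla y_n\|_{L^2(\Omega)}^2 = \int_\Omega y_n \mu_n \,\d\lambda^{d_1} \leq \|\bar y\|_{L^\infty(\Omega)} \, |\bar u|(\Omega).
\end{equation*}
By the complete continuity of $S$ from \cref{lem:pdeexist}, $y_n \to \bar y$ in $W_0^{1,q}(\Omega)$. A weakly convergent subsequence of the $H_0^1$-bounded sequence $\{y_n\}$ then has a weak limit $y^\ast$ that must coincide with $\bar y$ in $L^q(\Omega)$ via the compact embedding $H_0^1 \embed L^q$; hence $\bar y \in H_0^1(\Omega)$ and the estimate follows from weak lower semicontinuity.

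\emph{Step 3 ($H^{-1}$-membership and capacity).} With $\bar y \in H_0^1(\Omega)$ in hand, the map $\phi \mapsto \int_\Omega \nabla \bar y \cdot \nabla \phi \,\d\lambda^{d_1}$ is a continuous linear functional on $H_0^1(\Omega)$ that coincides on the dense subspace $W_0^{1,q'}(\Omega)$ with $\phi \mapsto \int_\Omega \phi\,\d\bar u$. Thus $\bar u$ (or, equivalently, $\bar u \mres \Omega$) induces an element of $H^{-1}(\Omega)$. The assertion that $\bar u$ does not charge sets of $H_0^1$-capacity zero is then the classical characterization of non-negative Borel measures belonging to $H^{-1}(\Omega)$.

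The decisive technical point is the pointwise comparison $y_n \leq \bar\yp$ in Step~2: a naive estimate of $\|y_n\|_{L^\infty}$ in terms of $\|\mu_n\|_{L^\infty}$ would blow up as $\varepsilon_n \to 0$. The superharmonicity of $G_\Omega$ in its second argument, together with the choice of mollifier support strictly inside $\Omega$, is exactly what rescues the estimate.
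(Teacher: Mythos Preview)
Your proof is correct. Steps~1 and~3 coincide with the paper's argument (the paper cites \cite[Lemma~6.55]{BonnansShapiro2000} for the capacity statement); your invocation of \cref{cor:nodiracinterior} in Step~1 is harmless but extraneous, since $\supp(\bar u\mres\Omega)\cap\Omega = \supp(\bar u)\cap\interior(\omega_1)$ follows immediately from $\interior(\omega_1)=\Omega$ without any information about atoms.

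The genuine difference is Step~2. The paper simply cites \cite[Lemma~5.8]{Ponce} (noting that the smooth-boundary hypothesis there is only used for well-posedness, which \cref{lem:pdeexist} already supplies) to obtain $\bar y\in H_0^1(\Omega)$ together with the bound $\|\bar y\|_{H_0^1}^2 \le \|\bar y\|_{L^\infty}\|\bar u\|_{\frakM(\Omega)}$. You instead give a self-contained mollification argument, and the key observation---that superharmonicity of $G_\Omega(x,\cdot)$ yields the pointwise comparison $y_n \le \bar\yp$, hence a uniform $L^\infty$ bound on the approximants independent of the mollification scale---is exactly what makes the energy estimate survive the limit. This buys you independence from the external reference at the cost of a few more lines; it is essentially a transparent reproof of the cited lemma, tailored to the present setting.
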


\begin{proof}
    From \cref{lem:yboundsupp}, it follows that $\bar\yp(x) \leq \|y_d\|_{C(\overline{\Omega})} + \alpha\, \|\laplace c\|_{C(\omega_0 \times \overline{\Omega})}$ 
    for all $x \in \supp(\bar u) \cap \Omega$.
    The $L^\infty(\Omega)$-bound then follows from \cref{lem:mariaforstmann} along with the non-negativity of $\bar\yp$.

    In order to check $\bar y \in H_0^1(\Omega)$ and the associated norm bound,
    we use \cite[Lemma~5.8]{Ponce}.
    In this lemma, it is assumed that the boundary of $\Omega$ is smooth,
    but this only used to check the unique solvability of the Poisson equation in the proof.
    Using our \cref{lem:pdeexist}, this is also possible on Lipschitz domains.

    The $H^1_0$-regularity of $\bar y$ directly implies $E_\Omega^* \bar u \in H^{-1}(\Omega)$.
    The last claim then follows from \cite[Lemma~6.55]{BonnansShapiro2000}.
\end{proof}

In \cref{cor:nodiracinterior} we have seen that, under \cref{assu:contcosts}, an optimal control $\bar u$ has no Diracs in the interior of $\omega_1$. 
In the prominent example of metric transportation costs however, where $c(x, \xi) = \|x - \xi\|$, \cref{assu:contcosts} is not satisfied. 
The next theorem shows that, in this case, a Dirac in $\bar u$ can only appear if the prior has an atom at the same point.

\begin{theorem}\label{thm:nodiractransport}
    Assume that $d_0 = d_1 =: d$. 
    Let $J$ be of tracking type, i.e., given by \eqref{eq:trackingOmega}, with a desired state $y_d$ that satisfies
    \begin{equation}\label{eq:ydreg}
        y_d \in 
        \begin{cases}
            L^s(\Omega), \; s > 3, & \text{if } d = 3,\\
            C^{0,\tau}(\Omega), \; \tau > 0, & \text{if } d = 2.
        \end{cases}
    \end{equation}
    Suppose moreover that, for every $x\in \omega_0$, the mapping $\omega_1 \ni \eta \mapsto c(x, \eta) \in \R$ 
    is twice continuously differentiable at every $\eta \in \interior(\omega_1) \setminus\{x\}$. 
    Further, let $\xi \in \interior(\omega_1)$ be an atom of an optimal control $\bar u$.
    Then, $(x, \xi) \in \supp(\bar\pi)$ implies $x = \xi$, that is, no mass from any other point is transported to $\xi$.
    Thus, $\bar u$ can only contain a Dirac at $\xi \in \interior(\omega_1)$, if $\{\xi\}$ is an atom of $u^0$. 
    Consequently, if $\omega_0 \cap \interior(\omega_1) = \emptyset$, then every optimal control can only have 
    atoms on the boundary of $\omega_1$.
\end{theorem}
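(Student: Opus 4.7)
I will argue by contradiction in the spirit of \cref{lem:yboundsupp}. Suppose that $\xi \in \interior(\omega_1)$ is an atom of $\bar u$ with mass $m := \bar u(\{\xi\}) > 0$, and that $(x, \xi) \in \supp(\bar\pi)$ for some $x \in \supp(u^0)$ with $x \neq \xi$. By \cref{lem:support_plan}, the continuous function
\begin{equation*}
    z(\eta) := c(x, \eta) + \tfrac{1}{\alpha}\, p(\eta)
\end{equation*}
attains its minimum on $\omega_1$ at $\xi$. Fix $r > 0$ with $\overline{B_r(\xi)} \subset \interior(\omega_1) \cap \Omega$ and $r < \tfrac{1}{2}\|x - \xi\|$; the hypothesis on $c$ then gives $c(x, \cdot) \in C^2(\overline{B_r(\xi)})$ with $|\laplace_\eta c(x, \cdot)|$ uniformly bounded there, and $\xi$ still minimizes $z$ on $B_r(\xi)$. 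The plan is to show that the Dirac mass at $\xi$ makes $-\laplace z$ so strongly positive in $B_r(\xi)$ (for small $r$) that this interior minimum is impossible.

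To track the singularity, I split $\bar u = m\, \delta_\xi + \nu$ with $\nu \geq 0$. \cref{lem:sobolevgreen} gives the pointwise representative $\bar\yp(\eta) = m\, G_\Omega(\eta, \xi) + y_\nu(\eta)$ with $y_\nu \geq 0$, so $\bar y \geq m\, G_\Omega(\cdot, \xi)$ a.e.\ and $G_\Omega(\eta, \xi) \to +\infty$ as $\eta \to \xi$. The adjoint equation $-\laplace p = \bar y - y_d$ then gives, distributionally on $B_r(\xi)$,
\begin{equation*}
    -\laplace z = -\laplace_\eta c(x, \cdot) + \tfrac{1}{\alpha}(\bar y - y_d) \in L^1(B_r(\xi)) .
\end{equation*}
The Green representation formula on $B_r(\xi)$ then yields
\begin{equation*}
    z(\xi) - \frac{1}{|\partial B_r(\xi)|} \int_{\partial B_r(\xi)} z \, d\sigma = \int_{B_r(\xi)} G_{B_r(\xi)}(\xi, \zeta)\, (-\laplace z)(\zeta)\, d\lambda^{d_1}(\zeta) .
\end{equation*}
Since $z \geq z(\xi)$ on $\overline{B_r(\xi)}$, the left-hand side is $\leq 0$; if I can show that the right-hand side is strictly positive for some small $r$, I obtain the contradiction.

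Establishing this positivity is the main obstacle, and the regularity \eqref{eq:ydreg} of $y_d$ is tailored precisely for it. In dimension $d_1 = 2$, $y_d$ is continuous hence bounded on $\overline{B_r(\xi)}$, while $\inf_{B_r(\xi)} G_\Omega(\cdot, \xi) \to +\infty$ as $r \to 0$, so $-\laplace z \geq 1$ a.e.\ on $B_r(\xi)$ for $r$ sufficiently small, and strict positivity of the volume integral is immediate. In dimension $d_1 = 3$, pointwise dominance is unavailable, so I would estimate each contribution separately: using $G_{B_r(\xi)}(\xi, \zeta) \lesssim \|\zeta - \xi\|^{-1}$ and H\"older's inequality with exponent $s > 3$ yields $\int_{B_r(\xi)} G_{B_r(\xi)}(\xi, \cdot)\, |y_d|\, d\lambda^{d_1} = O(r^{\, 2 - 3/s})$, and the bounded contribution from $\laplace_\eta c$ is $O(r^2)$, while the asymptotics $G_\Omega(\eta, \xi) \sim \|\eta - \xi\|^{-1}$ yield $\int_{B_r(\xi)} G_{B_r(\xi)}(\xi, \cdot)\, G_\Omega(\cdot, \xi)\, d\lambda^{d_1} \gtrsim r$. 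Since $s > 3$ forces $2 - 3/s > 1$, the $G_\Omega$-term dominates and strict positivity follows.

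The remaining claims follow immediately. If $\bar u$ has an atom at $\xi \in \interior(\omega_1)$, then by what has just been shown every $x \in \supp(u^0)$ with $(x, \xi) \in \supp(\bar\pi)$ satisfies $x = \xi$. Using ${P_1}_\# \bar\pi = \bar u$ and ${P_0}_\# \bar\pi = u^0$, the restriction of $\bar\pi$ to $\omega_0 \times \{\xi\}$ must equal $m\, \delta_{(\xi, \xi)}$; hence $u^0(\{\xi\}) \geq m > 0$, so $\{\xi\}$ is an atom of $u^0$ and, in particular, $\xi \in \omega_0$. The last assertion, under $\omega_0 \cap \interior(\omega_1) = \emptyset$, is then immediate.
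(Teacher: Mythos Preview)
Your argument is correct and reaches the same conclusion as the paper, but the technical route differs. The paper decomposes the adjoint state as $p = p_1 + p_2 + p_3$, where $p_1 = \beta(-\laplace)^{-1}\Phi_\xi$ is computed explicitly by solving a radial ODE (yielding the leading term $-\frac{\beta}{8\pi}\|\eta-\xi\|$ in 3D and $\frac{\beta}{8\pi}\|\eta-\xi\|^2(\ln\|\eta-\xi\|-1)$ in 2D), $p_2 = (-\laplace)^{-1}(\beta h_\xi - y_d)$ is shown to be $C^{1,\gamma}$ via elliptic regularity, and $p_3 = (-\laplace)^{-1}\hat y$ is superharmonic; a pointwise mean-value-type estimate then produces a single point $\eta_r \in \partial B_r(\xi)$ with $f_x(\eta_r) < f_x(\xi)$. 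You instead keep $p$ intact, pass to $-\laplace z = -\laplace_\eta c + \tfrac{1}{\alpha}(\bar y - y_d)$, and use the Green representation on $B_r(\xi)$ to express the mean-value deficit as a volume integral, which you then estimate term by term. Your integral $\int G_{B_r(\xi)}(\xi,\cdot)\,G_\Omega(\cdot,\xi)\,\d\lambda^d \sim \frac{r}{8\pi}$ in 3D recovers exactly the paper's leading coefficient without the explicit ODE, and your H\"older bound on the $y_d$-contribution plays the role of the paper's $C^{1,\gamma}$ estimate for $p_2$.

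What each approach buys: the paper's decomposition is entirely pointwise and needs no justification of a representation formula, at the cost of the explicit ODE computation and separate regularity arguments in $d=2,3$. Your route is shorter and more unified, but you should note that the Green representation $z(\xi)-\fint_{\partial B_r(\xi)} z = \int G_{B_r(\xi)}(\xi,\cdot)(-\laplace z)$ requires a word of justification at the stated regularity: since $-\laplace z \in L^\sigma(B_r(\xi))$ for some $\sigma > d/2$ (indeed $\bar y \in L^2$ and $y_d \in L^s$), the Green potential is a \emph{continuous} representative of the $H^1_0$-solution and therefore coincides with the continuous function $z - v$ (with $v$ the harmonic extension of $z|_{\partial B_r(\xi)}$). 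With that detail supplied, your argument is complete.
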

\begin{proof}
    Suppose that $\bar u$ contains an Dirac at $\xi \in \interior(\omega_1)$ with weight $\beta > 0$, i.e., 
    \begin{equation*}
        \bar u = \beta \, \delta_{\xi} + \bar u \mres (\omega_1 \setminus \{\xi\}).
    \end{equation*}
    According to \cref{lem:poissongreen}, the optimal state $\bar y$ is then given by 
    \begin{equation*}
        \bar y = \beta \, ( \Phi_\xi + h_\xi ) + \underbrace{(-\laplace)^{-1} E^* \big(\bar u \mres (\omega_1 \setminus \{\xi\})\big)}_{\displaystyle{=: \hat y}},
    \end{equation*}
    where  $\Phi_\xi$ is the fundamental solution with pole $\xi$ and $h_\xi$ its greatest harmonic minorant on $\Omega$.
    By \cref{lem:comparison} in the Appendix, we know that $\hat y \in W^{1,q}_0(\Omega)$ is non-negative. 
    Moreover, by \cref{thm:fon}, the adjoint state $p$ is given by 
    \begin{equation}\label{eq:defp123}
        p = \beta (-\laplace)^{-1}(\Phi_\xi)  + (-\laplace)^{-1} (\beta \, h_\xi - y_d) + (-\laplace)^{-1} \hat y =: p_1 + p_2 + p_3,
    \end{equation}
    where, with a little abuse of notation, $(-\laplace)^{-1}$
    can be considered as an operator from $W^{-1,q'}(\Omega)$
    to $W^{1,q'}_0(\Omega)$,
    due to the embedding
    $W_0^{1,q}(\Omega) \embeds L^2(\Omega) \embeds W^{-1,q'}(\Omega)$.
    Note that $h_\xi$ is continuous up to the boundary of $\Omega$, since $\xi \in \interior(\omega_1) \subset  \Omega$, 
    cf.\ the proof of \cref{lem:poissongreen}, so that $p_2$ is well defined.
    Let us consider the three different contributions separately. For $p_1$ we find
    \begin{equation*}
        p_1 = \beta \big( \Psi_\xi |_{\Omega} + h \big),
    \end{equation*}
    where $\Psi_\xi$ is a distributional solution of
    \begin{equation*}
        -\laplace \Psi_\xi = \Phi_\xi \quad\text{on } \R^d
    \end{equation*}
    and $h$ is the harmonic extension of $-\Psi_\xi|_{\partial\Omega}$ to $\Omega$. While $h$ is arbitrarily smooth in a neighborhood of $\xi$, 
    a solution $\Psi_\xi$ can be computed exactly. By the radial symmetry of the fundamental solution, 
    every such function is of the form $\Psi_\xi(\eta) = \psi_\xi(\norm{\eta - \xi})$ with a radially symmetric function $\psi_\xi$ that solves 
    \begin{equation}\label{eq:laplacepolar3d}
        -\frac{1}{r^2} \frac{\partial}{\partial r}
        \parens*{ r^2 \, \frac{\partial\psi_\xi}{\partial r}}
        =
        \frac{1}{4 \pi} r^{-1}, \quad r > 0 
    \end{equation}
    in the three-dimensional case and 
    \begin{equation}\label{eq:laplacepolar2d}
        -\frac{1}{r} \frac{\partial}{\partial r}
        \parens*{ r \, \frac{\partial\psi_\xi}{\partial r}}
        =
        - \frac{1}{2 \pi}\, \ln(r), \quad r > 0 ,
    \end{equation}
    in case of $d = 2$. 
    Let us first focus on \eqref{eq:laplacepolar3d}. Straightforward computations shows that any solution to this equation is given by
    \begin{equation*}
        \psi_\xi(r) = -\frac{1}{8 \pi} \, r + C_1 \, r^{-1} + C_2, \quad C_1, C_2 \in \R
    \end{equation*}
    Due to $p_1 \in W^{1,q'}_0(\Omega) \embed C(\overline{\Omega})$, the constant $C_1$ must necessarily vanish, while $C_2$ can be compensated by $h$ so that 
    \begin{equation}\label{eq:p13d}
        p_1(\eta) = \beta\Big(- \frac{1}{8 \pi} \, \|\eta - \xi\| + h(\eta)\Big), \quad \text{if } d = 3 .
    \end{equation}
    In the two-dimensional case, where $\psi_\xi$ is determined by \eqref{eq:laplacepolar2d}, we obtain     
    \begin{equation*}
        \psi_\xi(r) = \frac{1}{8\pi}\,r^2 \big(\ln(r) - 1 \big) + C_1 \, \ln(r) + C_2,\quad C_1, C_2 \in \R
    \end{equation*}
    (with the convention $r^2 \ln(r) := 0$, if $r = 0$).
    Again, due to $p_1 \in C(\overline{\Omega})$, we again have $C_1 = 0$, while $C_2$ is again compensated by the harmonic part $h$.  
    Therefore, in the two-dimensional case, $p_1$ is given by
    \begin{equation}\label{eq:p12d}
        p_1(\eta) = \beta \Big(\frac{1}{8 \pi} \, \|\eta - \xi\|^2 \big(\ln(\|\eta - \xi\|) - 1 \big)  + h(\eta)\Big), \quad \text{if } d = 2 .
    \end{equation}

    Concerning $p_2$ in case $d = 3$,
    the regularity of $y_d$ and standard interior regularity results, cf., e.g., \cite[Theorem~9.11]{GT01},  imply
    \begin{equation}\label{eq:p2est}
    \begin{aligned}
        \| p_2 \|_{C^{1, \gamma}(B_\rho(\xi))} 
        & \leq C\, \|p_2\|_{W^{2,s}(B_\rho(\xi))} \\
        & \leq C\, \big( \beta \|h_\xi\|_{C(\overline{\Omega})} +  \|y_d\|_{L^s(\Omega)} \big)
        \quad\text{with}\quad \gamma = 1 - \frac{d}{s},    
    \end{aligned}
    \end{equation}
    where $\rho> 0$ is such that $\overline{B_{2\rho}(\xi)} \subset \interior(\omega_1) \subset  \Omega$. 
    Note that our assumption on $s$ implies $\gamma > 0$. 
    In the two dimensional case, we argue as follows: We already know that $p_2$ is continuous on the whole $\overline{\Omega}$. 
    Moreover, $p_2$ solves
    \begin{equation*}
        - \laplace v = \beta h_\xi - y_d \quad \text{in } B_{2\rho}(\xi), \quad
        v = p_2 \quad \text{on } \partial B_{2\rho}(\xi) .
    \end{equation*}
    As the right hand side is H\"older continuous by assumption, cf.\ \eqref{eq:ydreg}, and since $h_\xi$ is harmonic,
    and the boundary data are continuous, \cite[Theorem~4.3]{GT01} implies that this equation admits a unique classical solution 
    $v \in C^2(B_{2\rho}(\xi))$. Therefore, $p_2 \in C^2(B_{2\rho}(\xi))$ and hence, the assumptions of \cite[Theorem~4.6]{GT01} are fulfilled, which implies
    \begin{equation*}
    \begin{aligned}
        \| p_2 \|_{C^{1, 1}(B_\rho(\xi))} 
        & \leq C\, \|p_2\|_{C^{2,\tau}(B_\rho(\xi))} \\
        & \leq C\big( \|p_2\|_{C(\overline{B_{2\rho}(\xi)})} + \|\beta h_\xi - y_d\|_{C^{0,\tau}(B_{2\rho}(\xi))} \big) \\
        & \leq C \big(\|h_\xi\|_{C^{0, \tau}(B_{2\rho}(\xi))} + \|h_\xi\|_{C(\overline{\Omega})} +  \|y_d\|_{C^{0,\tau}(\Omega)} \big)  .
    \end{aligned}
    \end{equation*}
        
    Moreover, due to the non-negativity of $\hat y$, the third contribution  
    \begin{equation*}
        p_3(\eta) = \int_\Omega G_\Omega(\eta, \zeta)\, \hat y(\zeta) \,\d\lambda^3(\zeta)
    \end{equation*}        
    is superharmonic, see, e.g., \cite[Theorem~3.3.1]{AG01}.
    Therefore, for every $\rho>0$ such that $\overline{B_\rho(\xi)} \subset  \interior(\omega_1) \subset \Omega$ and every 
    $\varphi \in C^{1,\kappa}(B_\rho(\xi))$, $\kappa \in (0,1]$, and $r \in (0,\rho)$, there holds that
    \begin{equation*}
    \begin{aligned}
        & \frac{1}{\HH^2(\partial B_r(\xi))} \int_{\partial B_r(\xi)} \big(p_3(\eta) + \varphi(\eta)\big) \d\HH^2(\eta) \\
        & \; \leq p_3(\xi) + \varphi(\xi) + \frac{1}{\HH^2(\partial B_r(\xi))} \, \nabla \varphi(\xi) \cdot \underbrace{\int_{\partial B_r(\xi)} (\eta - \xi)\,\d \HH^2(\eta)}_{=0}
        +  \|\varphi\|_{C^{1,\kappa}(B_\rho(\xi))} \, r^{1+\kappa} .
    \end{aligned}
    \end{equation*}
    Hence, for all $r \in (0,\rho)$, we obtain
    \begin{equation*}
         \int_{\partial B_r(\xi)} \big(p_3(\eta) + \varphi(\eta) \big) \d\HH^2(\eta) \leq \HH^2(\partial B_r(\xi)) 
         \Big(p_3(\xi) + \varphi(\xi) +  \|\varphi\|_{C^{1,\kappa}(B_\rho(\xi))} \, r^{1+\kappa} \Big)
    \end{equation*}
    and therefore, there exists a point $\eta_r \in \partial B_r(\xi)  \subset \interior(\omega_1)$ such that 
    \begin{equation}\label{eq:superharmest}
        p_3(\eta_r) + \varphi(\eta_r) \leq p_3(\xi) + \varphi(\xi) +   \|\varphi\|_{C^{1,\kappa}(B_\rho(\xi))} \, r^{1+\kappa} .
    \end{equation}
    
    Now,
    due to $\xi \in \supp(\bar u)$,
    \cref{lem:support_plan}
    implies the existence of $x \in \supp(u^0)$ such that
    \begin{equation}\label{eq:ximin}
        \xi \in \argmin_{\eta \in  \omega_1}
        \Big\{
            \underbrace{c(x, \eta) + \frac{1}{\alpha}\, \big(p_1(\eta) + p_2(\eta) + p_3(\eta)\big)}_{\displaystyle{=: f_x(\eta)}} \Big\} .
    \end{equation}
    Let us assume that $x\neq \xi$ and investigate the objective $f_x$. By assumption, $c(x, \,\cdot\,) \in C^2(B_{2\rho}(\xi))$, provided that $\rho > 0$ is sufficiently small. 
    Thus we obtain 
    \begin{equation*}
        \varphi := \alpha \, c(x, \,\cdot\,) + \beta\, h + p_2 \in C^{1,\kappa}(B_\rho(\xi))
        \quad \text{with} \quad \kappa = 
        \begin{cases}
            1, & \text{if } d = 2,\\
            \gamma, & \text{if } d = 3.
        \end{cases}
    \end{equation*}        
    Therefore, in the three-dimensional case, we deduce from \eqref{eq:superharmest} and \eqref{eq:p13d} that, for all $r < \rho$, it holds
    \begin{equation}\label{eq:fxabstieg}
    \begin{aligned}
        f_x(\eta_r) & = - \frac{\beta}{8\pi\alpha} \, \|\eta_r - \xi\|  + \frac{1}{\alpha} \big( p_3(\eta_r) + \varphi(\eta_r)\big) \\
        & \leq \frac{1}{\alpha}\big(p_3(\xi) + \varphi(\xi) + \|\varphi\|_{C^{1,\gamma}(B_\rho(\xi))} \, r^{1+\gamma}\big) - \frac{\beta}{8\pi\alpha} \, r \\
        &= f_x(\xi) + \frac{1}{\alpha}  \, \|\varphi\|_{C^{1,\gamma}(B_\rho(\xi))} \, r^{1+\gamma} - \frac{\beta}{8\pi\alpha} \, r 
    \end{aligned}
    \end{equation}
    and thus, $f_x(\eta_r) < f_x(\xi)$ for $r > 0$ sufficiently small, contradicting \eqref{eq:ximin}. 
    Similarly, in the two-dimensional case, \eqref{eq:superharmest} with $\kappa = 1$ and \eqref{eq:p12d} imply that, for all $r < \rho$, 
    \begin{equation}
    \begin{aligned}
        f_x(\eta_r) & \leq \frac{1}{\alpha}\big(p_3(\xi) + \varphi(\xi) + \|\varphi\|_{C^{1,1}(B_\rho(\xi))} \, r^2\big) + \frac{\beta}{8\pi \alpha}\,r^2 \big(\ln(r) - 1 \big)  \\
        &= f_x(\xi) +  \frac{\beta}{8\pi \alpha}\, r^2 \Big( \ln(r) + \frac{8\pi}{\beta} \|\varphi\|_{C^{1,1}(B_\rho(\xi))} - 1 \Big)  .
    \end{aligned}
    \end{equation}
    Again, for $r>0$ sufficiently small, we obtain $f_x(\eta_r) < f_x(\xi)$, which contradicts \eqref{eq:ximin}.
    Consequently, in both cases $d=2$ and $d=3$, $x$ has to be equal to $\xi$, 
    if $(x, \xi) \in \supp(\bar\pi)$ and no mass from any other point can be transported to $\xi$ as claimed.
    Furthermore, \eqref{eq:prop_support} implies
    \begin{equation*}
    \begin{aligned}
        0 < \beta = \bar u(\{\xi\}) 
        & =  \bar\pi( (\omega_0 \times \{\xi\}) \cap \supp(\bar\pi)) \\
        & = \bar\pi( (\{\xi\} \times \{\xi\}) \cap \supp(\bar\pi)) \le \bar\pi( \{\xi\} \times \omega_1)
        = u^0(\{\xi\})    
    \end{aligned}
    \end{equation*}
    and, consequently
    $\{\xi\}$ is also an atom of $u^0$.
\end{proof}

\begin{remark}
    At the end of \cref{sec:metric}, we will state a trivial example, which demonstrates that $\bar u$ may well contain a Dirac in $\interior(\omega)$,
    if $u^0$ does so, which demonstrates that \cref{thm:nodiractransport} is indeed sharp in case of metric transportation costs and \cref{cor:nodiracinterior} 
    does not hold in this case.
\end{remark}


\section{Existence of a transport map for strongly convex costs }\label{sec:stritctconv}

In this section, we consider the case of strongly convex transportation costs. To be more precise, we require the following

\begin{assumption}\label{assu:strongconv}
    We assume that $\omega_1$ is convex. Moreover, let $d_0 = d_1 =: d$ so that $\omega_0 , \omega_1 \subset \R^d$. 
    The transportation costs are given by $c(x, \xi) = h(\xi-x)$ with a function $h : \R^d \to \R$ that is strongly convex on bounded sets, i.e., for all $R > 0$
    there exists $\beta_R > 0$ such that 
    \begin{equation}\label{eq:hstrongconv}
        \dual{ x^* - \xi^* }{x - \xi} \ge \beta_R \norm{x - \xi}^2
        \qquad
        \forall (x, x^*), (\xi, \xi^*) \in \graph(\partial h).
    \end{equation}
\end{assumption}

Note that a finite convex function on $\R^d$ is automatically locally Lipschitz continuous such that 
$c$ defined above satisfies \cref{assu:standing}.

\begin{remark}
    The classical example for \cref{assu:strongconv} is $h(\xi-x) = \frac{1}{\gamma} \|\xi-x\|^\gamma$ with $\gamma \in (1,2]$. 
    Note that $h$ is twice continuously differentiable on $\R^d \setminus \set{0}$
    with
    \begin{equation*}
        \nabla^2 h(x)
        =
        \norm{x}^{\gamma-2} \parens*{ I - (2-\gamma) \frac{x}{\norm{x}} \frac{x^\top}{\norm{x}}}
        \succeq
        (\gamma-1) \norm{x}^{\gamma-2} I
        \succeq
        (\gamma-1) R^{\gamma-2} I
        .
    \end{equation*}
    Thus,
    $\beta_R = (\gamma-1) R^{\gamma-2}$ is possible in \eqref{eq:hstrongconv}.
    This choice of the cost function $c$ gives rise to $\wdist^c_{u^0}(u) = W_\gamma(u^0, u)^\gamma$ so that the generalized transportation distance is nothing else 
    than the Wasserstein distance of order $\gamma$
    raised to the $\gamma$-th power
    between $u^0$ and $u$.
\end{remark}

Since $\omega_0$ and $\omega_1$ are compact by assumption, there exists a radius $\varrho > 0$ such that $\omega_1 - \omega_0 \subset B_\varrho(0)$.
In the following, we employ the strong convexity of $h$ only on $B_\varrho(0)$. 
To ease notation, we denote the modulus of strong convexity on this set simply by $\beta \coloneqq \beta_\varrho$.

\begin{theorem}\label{thm:transmap}
    Let \cref{assu:strongconv} hold 
    and let $\bar u$ and its associated state $\bar y$ be locally optimal for \eqref{eq:optctrl}. Assume moreover 
    that the associated adjoint state $p$ from \eqref{eq:adjpde} is continuously differentiable in $\omega_1$ and that 
    there is a constant $\kappa < \alpha \, \beta$ such that 
    \begin{equation}\label{eq:kruemmungp}
        \dual{\nabla p(\xi) - \nabla p(\zeta)}{\xi - \zeta} \geq - \kappa\, \|\xi - \zeta\|^2 \quad \forall\, \xi, \zeta \in \omega_1.
    \end{equation}
    Then
    the optimal transport from $u^0$ to $\bar u$
    is given by a unique transport plan which is induced by
    a transportation map $T: \omega \to \omega$ such that $T_\# u^0 = \bar u$.
    Moreover, $T$ is H\"older continuous 
    with exponent $1/2$.
    If $h$ is continuously differentiable with Lipschitz continuous gradient in $\omega_0 - \omega_1$, then $T$ is Lipschitz continuous, too.
\end{theorem}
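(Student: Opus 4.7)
The plan is to combine the strong convexity of $h$ on $B_\varrho(0)$ with the semi-convexity bound \eqref{eq:kruemmungp} on $p$ to show that, for each $x \in \omega_0$, the function
\begin{equation*}
    g_x: \omega_1 \to \R, \qquad g_x(\eta) := c(x,\eta) + \tfrac{1}{\alpha}\, p(\eta) = h(\eta - x) + \tfrac{1}{\alpha}\, p(\eta)
\end{equation*}
is $\mu$-strongly convex on the convex set $\omega_1$, with $\mu := \beta - \kappa/\alpha > 0$. Indeed, \eqref{eq:hstrongconv} is the strong-monotonicity characterization of $\beta$-strong convexity of $h$ on $B_\varrho(0) \supset \omega_1 - \omega_0$, so $\eta \mapsto h(\eta - x)$ is $\beta$-strongly convex on $\omega_1$, while \eqref{eq:kruemmungp} amounts to $\kappa$-semi-convexity of $p$ on $\omega_1$; summing gives the desired strong convexity of $g_x$. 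In particular, $g_x$ admits a unique minimizer on $\omega_1$, which I denote by $T(x)$, and the quadratic growth estimate
\begin{equation*}
    g_x(\eta) \;\geq\; g_x(T(x)) + \tfrac{\mu}{2}\,\|\eta - T(x)\|^2
\end{equation*}
holds for every $\eta \in \omega_1$.

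By \cref{lem:support_plan}, every $\bar\pi \in \KK_c(u^0, \bar u)$ is supported in the set of pairs $(x,\xi)$ for which $\xi$ minimizes $c(x,\cdot) + \tfrac{1}{\alpha} p$ on $\omega_1$; by the uniqueness just established this set equals $\graph(T)$. Hence $\bar\pi = (\mathrm{Id}, T)_\# u^0$ is the unique optimal transport plan, and pushing forward the second marginal yields $\bar u = T_\# u^0$. For the regularity of $T$, take $x_1, x_2 \in \omega_0$ and set $\xi_i := T(x_i)$. Applying the growth estimate for $g_{x_1}$ at $\xi_2$ and for $g_{x_2}$ at $\xi_1$ and adding, the $p$-terms cancel and we obtain
\begin{equation*}
    h(\xi_2 - x_1) - h(\xi_1 - x_1) + h(\xi_1 - x_2) - h(\xi_2 - x_2) \;\geq\; \mu\, \|\xi_1 - \xi_2\|^2 .
\end{equation*}
Since $h$ is Lipschitz on $B_\varrho(0)$ with some constant $L$ (being a finite convex function on $\R^d$), the left-hand side is bounded by $2L\,\|x_1 - x_2\|$, yielding $\|T(x_1) - T(x_2)\|^2 \leq (2L/\mu)\,\|x_1 - x_2\|$, i.e., the claimed $1/2$-H\"older estimate. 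If in addition $\nabla h$ is Lipschitz with constant $M$ on $\omega_0 - \omega_1$, rewriting the left-hand side as the mixed second difference
\begin{equation*}
    \int_0^1 \langle \nabla h(\xi_2 - x_2 + s(x_2-x_1)) - \nabla h(\xi_1 - x_2 + s(x_2-x_1)),\; x_2 - x_1\rangle \,\d s
\end{equation*}
bounds it by $M\,\|\xi_1 - \xi_2\|\,\|x_1 - x_2\|$, from which Lipschitz continuity of $T$ with constant $M/\mu$ follows after dividing by $\|\xi_1 - \xi_2\|$.

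The conceptually crucial step is the strong-convexity estimate for $g_x$: it simultaneously delivers uniqueness of $T(x)$ (and thus of $\bar\pi$ via \cref{lem:support_plan}) and the quadratic growth bound that drives the regularity argument. The remainder is bookkeeping, with the Lipschitz continuity of $h$ (respectively of $\nabla h$) on $B_\varrho(0)$ serving only to control the cost mismatch after summing the two growth inequalities; the sole subtlety to handle carefully is the fact that $h$ need not be differentiable everywhere, so the combination of the two convexity properties must be performed at the level of convex functions on $\omega_1$ rather than via a pointwise computation with $\nabla h$.
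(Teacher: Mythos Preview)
Your argument is essentially identical to the paper's: both establish $\mu$-strong convexity of $g_x$ with $\mu = \beta - \kappa/\alpha$, deduce uniqueness of the minimizer $T(x)$, use \cref{lem:support_plan} to get $\supp(\bar\pi)\subset\graph(T)$ and hence $\bar\pi=(\mathrm{Id},T)_\# u^0$, and obtain the regularity of $T$ by summing the two quadratic-growth inequalities and bounding the resulting second difference of $h$.

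One small technical wrinkle in your Lipschitz step: your integral representation evaluates $\nabla h$ at points $\xi_i - \big((1-s)x_2 + s x_1\big)$, which lie in $\omega_1 - \conv(\omega_0)$ rather than $\omega_1 - \omega_0$, and $\omega_0$ is not assumed convex. The paper avoids this by integrating along the segment $[\xi_1,\xi_2]$ instead (writing the mixed difference as $\int_0^1 \langle \nabla h(\xi_1 - x_1 + \theta(\xi_2-\xi_1)) - \nabla h(\xi_1 - x_2 + \theta(\xi_2-\xi_1)),\, \xi_2-\xi_1\rangle\,\d\theta$), which stays inside $\omega_1 - \omega_0$ thanks to the assumed convexity of $\omega_1$; this yields the bound $\lip_{\omega_1-\omega_0}(\nabla h)\,\|x_1-x_2\|\,\|\xi_1-\xi_2\|$ and hence Lipschitz continuity of $T$.
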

\begin{proof}
   We again employ the complementarity relation of Kantorovich duality from \eqref{eq:optimality_support_2}, which now reads 
    \begin{equation}\label{eq:supppi}
        \supp(\bar\pi)
        \subset
        \big\{(x,\xi)\in \omega_0 \times \omega_1 : h(\xi-x) + \frac{1}{\alpha} p(\xi) = \inf_{\eta \in \omega_1} h(\eta-x) + \frac{1}{\alpha} p(\eta)\big\} ,
    \end{equation}
    where $\bar\pi$ is an arbitary optimal transport plan $\bar\pi \in \KK_c(u^0, \bar u)$.
    Since $h$ and $p$ are continuous and $\omega_1$ is compact, 
    the infimum in \eqref{eq:supppi} is attained for every $x\in \omega_0$. Moreover, thanks to \eqref{eq:kruemmungp}, the strong convexity of $h$, and the convexity of $\omega_1$, 
    the objective $\omega_1 \ni \eta \mapsto h(\eta-x) + \frac{1}{\alpha} p(\eta)$ is strongly convex with constant $\beta - \kappa/\alpha > 0$
    such that the minimizer is unique. 
    Thus the mapping 
    \begin{equation}\label{eq:deftransmap}
        T : x \ni \omega_0 \mapsto \xi = \argmin_{\eta \in \omega_1} h(\eta-x) + \frac{1}{\alpha} p(\eta)
        \in \omega_1
    \end{equation}
    is well defined. Let us investigate the regularity of $T$.
    The strong convexity of the objective in \eqref{eq:deftransmap} implies
    \begin{align*}
        h(\xi_1 - x_1) + \frac{1}{\alpha} p(\xi_1) + \frac{\beta - \kappa/\alpha}{2} \norm{\xi_1 - \xi_2}^2
        &\le
        h(\xi_2 - x_1) + \frac{1}{\alpha} p(\xi_2)
        \\
        h(\xi_2 - x_2) + \frac{1}{\alpha} p(\xi_2) + \frac{\beta - \kappa/\alpha}{2} \norm{\xi_1 - \xi_2}^2
        &\le
        h(\xi_1 - x_2) + \frac{1}{\alpha} p(\xi_1)
        .
    \end{align*}
    Adding these inequalities yields
    \begin{equation}
        \label{eq:fancy_estimate_for_sensitivity}
        \begin{aligned}
            (\beta - \kappa/\alpha) \norm{\xi_2 - \xi_1}^2
            &\le
            h(\xi_2 - x_1)
            -
            h(\xi_2 - x_2)
            +
            h(\xi_1 - x_2)
            -
            h(\xi_1 - x_1)
            \\&
            \le
            2\,\lip_{\omega_1-\omega_0}(h) \norm{x_1 - x_2},
        \end{aligned}
    \end{equation}
    where $\lip_{\omega_1 - \omega_0}(h)$ is the Lipschitz constant of the convex function $h$ on the compact set $\omega_1 - \omega_0$.
    This yields the claimed  Hölder continuity of $T$ with exponent $1/2$.

    Therefore $T$ is Borel measurable and we have to check that it defines a transport map. 
    To see this, we note that \eqref{eq:supppi} and \eqref{eq:deftransmap}
    imply that $\supp(\bar\pi) \subset G := \graph(T)$ for every optimal transport plan.
    This implies $\bar\pi = (\id, T)_{\#} u^0$, since
    \begin{align*}
        \parens*{ (\id, T)_{\#} u^0 }(B)
        &=
        u^0( (\id, T)^{-1} (B) )
        =
        u^0\parens*{
            \set*{
                x \in \omega_0 \given (x, T(x)) \in B
            }
        }
        \\&=
        \bar\pi\parens*{
            \set*{
                x \in \omega_0 \given (x, T(x)) \in B
            }
            \times
            \omega_1
        }
        \\&=
        \bar\pi\parens*{
            \bracks*{
                \set*{
                    x \in \omega_0 \given (x, T(x)) \in B
                }
                \times
                \omega_1
            }
            \cap
            G
        }
        =
        \bar\pi( B \cap G )
        =
        \bar\pi( B )
    \end{align*}
    for every Borel set $B \subset \omega_0 \times \omega_1$.
    Hence, every optimal transport plan is induced by the map $T$
    and this shows uniqueness.
     
    To show the improved regularity of $T$, if $h$ is differentiable, we improve \eqref{eq:fancy_estimate_for_sensitivity}
    by using
    \begin{align*}
        \MoveEqLeft
        h(\xi_2 - x_1)
        -
        h(\xi_2 - x_2)
        +
        h(\xi_1 - x_2)
        -
        h(\xi_1 - x_1)
        \\
        &=
        \int_0^1 \dual{\nabla h(\xi_1 - x_1 + \theta(\xi_2 - \xi_1)) - \nabla h(\xi_1 - x_2 + \theta(\xi_2 - \xi_1)) }{\xi_2 - \xi_1} \,\d\theta
        \\
        &\le
        \lip_{\omega_1 - \omega_0}(\nabla h)\,\norm{x_1 - x_2}\,\norm{\xi_1 - \xi_2}
        .
    \end{align*}
    This implies the Lipschitz continuity of $T$.
\end{proof}

\begin{remark}
    \cref{thm:transmap} implies the existence of an optimal transport map without any assumptions 
    on the marginals $u^0$ and $\bar u$. For the sole Kantorovich problem with fixed (i.e., given) marginals, 
    the existence of a transport map
    from $u^0$ to $\bar u$
    can, in general only be shown, if $\bar u$ is absolutely continuous 
    w.r.t.\ the Lebesgue measure, even in case of strictly convex transportation costs, see \cite[Theorem~1.17]{santambrogio} for details. 
    A classical counterexample is a Dirac measure that is transported to the Lebesgue measure. 
    In the optimal control setting of \cref{thm:transmap} however, the additional regularity of the Kantorovich 
    potential given by the adjoint state allows us to establish the existence of the transport map without any further 
    regularity assumptions on the marginals.
\end{remark}

The existence of a (continuous)
transport map implies several useful consequences.
In particular, the application of \cref{lem:support_pushforward}
implies the next result.

\begin{lemma}\label{lem:suppu}
    If the assumptions of \cref{thm:transmap} are satisfied,
    we have $\supp(\bar u) = T(\supp(u^0))$.
\end{lemma}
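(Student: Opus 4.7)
The plan is to invoke \cref{lem:support_pushforward} from the appendix, which (from its name and the way it is cited here) should state that for a nonnegative finite Borel measure $\mu$ on a compact set and a continuous map $T$, one has $\supp(T_\# \mu) = \overline{T(\supp(\mu))}$. The hypotheses of \cref{thm:transmap} give us exactly the two ingredients needed: the transport plan from $u^0$ to $\bar u$ is induced by the map $T$, i.e., $\bar u = T_\# u^0$, and $T \colon \omega_0 \to \omega_1$ is (at least $1/2$-Hölder) continuous.

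Concretely, I would first record that by \cref{thm:transmap} we have $\bar u = T_\# u^0$ with $T$ continuous on $\omega_0$. Applying the cited pushforward-support lemma then yields
\begin{equation*}
    \supp(\bar u) = \supp(T_\# u^0) = \overline{T(\supp(u^0))}.
\end{equation*}
To drop the closure, I note that $\supp(u^0)$ is a closed subset of the compact set $\omega_0$, hence compact, and continuous images of compact sets are compact, hence closed in $\omega_1$. Therefore $T(\supp(u^0))$ is already closed and the overline can be removed, giving the claimed identity $\supp(\bar u) = T(\supp(u^0))$.

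The main (and essentially only) obstacle is making sure the cited auxiliary lemma really gives the equality with the closure in the generality needed; this is standard (one inclusion uses continuity of $T$ together with the definition of the pushforward to show that any point not in $\overline{T(\supp(u^0))}$ has a neighborhood of $\bar u$-measure zero, and the reverse inclusion uses that $T$ maps $\supp(u^0)$ into $\supp(T_\# u^0)$ because preimages of neighborhoods of $T(x)$ are neighborhoods of $x$). Given \cref{lem:support_pushforward}, the remainder is just the compactness-plus-continuity remark above, so no further nontrivial work is required.
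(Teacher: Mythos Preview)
Your proposal is correct and matches the paper's approach exactly: the paper simply states that \cref{lem:suppu} follows from an application of \cref{lem:support_pushforward}, using that \cref{thm:transmap} gives $\bar u = T_\# u^0$ with $T$ continuous. One minor point: the paper's \cref{lem:support_pushforward} already assumes the domain $X$ is compact and concludes $\supp(T_\# \mu) = T(\supp(\mu))$ without a closure (compactness is used in its proof to show the nontrivial inclusion), so your separate step of removing the closure is unnecessary---though your argument for it is perfectly valid.
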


By \cref{thm:transmap}, we know that the transport map $T$ is even H\"older continuous with exponent $\theta \in [1/2, 1]$.
Thus \cref{lem:suppu} implies that for every $s\in [0,\infty)$, it holds 
\begin{equation*}
    \HH^{s/\theta}(\supp(\bar u)) \leq \kappa_T^{s/\theta}\, \HH^{s}(\supp(u^0)),
\end{equation*}
where $\HH^s$ denotes the $s$-dimensional Hausdorff measure and $\kappa_T > 0$ is the H\"older constant of $T$. 
Note that this inequality is shown in \cite[Proposition~3.5]{Maggi2012}
for the Lipschitz case $\theta = 1$
and the proof directly generalizes to $\theta < 1$.

Then, if we set $s = \dim_{\HH}(\supp(u^0)) + \varepsilon$ with some $\varepsilon > 0$, the above estimate implies
\begin{equation*}
    \HH^{s/\theta}(\supp(\bar u)) \leq \kappa_T^{s/\theta}\, \HH^{s}(\supp(u^0)) = 0
\end{equation*}     
such that $\dim_{\HH}(\supp(\bar u)) \leq s/\theta$. Letting $\varepsilon \searrow 0$ thus shows that 
the Hausdorff dimension $\dim_{\HH}(\supp(\bar u))$ is at most $\theta^{-1}$ times larger than 
the one of $\supp(u^0)$ under the Assumptions of \cref{thm:transmap}.
As a consequence, we have $\dim_{\HH}(\supp(\bar u)) = 0$ whenever 
$\dim_{\HH}(\supp(u^0)) = 0$. If $u^0$ is a discrete measure, i.e., a weighted sum of countably many 
Dirac measures, we can say even more:

\begin{corollary}\label{cor:barudiracs}
    Let \cref{assu:strongconv} be fulfilled and suppose that the adjoint state fulfills \eqref{eq:kruemmungp}. 
    If $\supp(u^0)$ consists of countable many points, 
    then $\supp(\bar u)$ consists of at most countable many points, too. 
    Similarly, if $\supp(u^0)$ is a finite set with cardinality $m\in \N$, then $|\supp(\bar u)| \leq m$.
\end{corollary}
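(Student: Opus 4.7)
The plan is to apply \cref{lem:suppu} and then use the elementary fact that the image of a (at most) countable set under any function is (at most) countable, and the image of a finite set of cardinality $m$ has cardinality at most $m$.

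More precisely, since \cref{assu:strongconv} is in force and the curvature condition \eqref{eq:kruemmungp} on the adjoint state $p$ is satisfied, \cref{thm:transmap} provides a (Hölder) continuous transport map $T \colon \omega_0 \to \omega_1$ with $T_{\#} u^0 = \bar u$, and \cref{lem:suppu} then gives the identity
\begin{equation*}
    \supp(\bar u) = T(\supp(u^0)) .
\end{equation*}
From this the assertion is immediate: if $\supp(u^0)$ is at most countable, then so is $T(\supp(u^0)) = \supp(\bar u)$, being a set-theoretic image; similarly, if $|\supp(u^0)| = m$, then $|\supp(\bar u)| = |T(\supp(u^0))| \leq m$.

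The only step that is not purely formal is the invocation of \cref{lem:suppu}, which itself relies on the existence and continuity of $T$ from \cref{thm:transmap}; no further argument is needed, so there is no real obstacle in this corollary. In particular, we do \emph{not} need to know whether $T$ is injective on $\supp(u^0)$, since the bound $|T(\supp(u^0))| \leq |\supp(u^0)|$ is trivial without injectivity.
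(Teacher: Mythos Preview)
Your proof is correct and follows essentially the same route as the paper: apply \cref{thm:transmap} to obtain the transport map $T$, invoke \cref{lem:suppu} to get $\supp(\bar u) = T(\supp(u^0))$, and then conclude by the trivial cardinality bound for images of functions.
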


\begin{proof}
    The assertions follow immediately from \cref{thm:transmap} and \cref{lem:suppu}, respectively.
    If $\supp(u^0) = \bigcup_{i\in\N} x_i$, then by \cref{lem:suppu} there holds
    $\supp(\bar u) = \bigcup_{i\in\N} T(x_i)$, which gives the first claim. The second can be proven analogously.
\end{proof}

The condition in \eqref{eq:kruemmungp} imposes a hard assumption on the curvature and the regularity of the adjoint state.
So the crucial question is, if there are examples were this assumption is met. For this purpose, we assume that 
$\omega_1 \subset \Omega$ and consider again a tracking type objective, this time however with an observation domain $D$ disjoint from $\omega_1$, i.e.,
\begin{equation}\label{eq:tracking}
    J(y) := \frac{1}{2} \, \| y - y_d\|^2_{L^2(D)},
\end{equation}
where $D \subset \Omega$ is a measurable set with $\dist(\overline{D}, \omega_1) > 0$.
Furthermore, $y_d$ is a given desired state in $L^2(D)$.
Due to $y\in W^{1,q}_0(\Omega) \embed L^s(\Omega)$ with $s =  dq/(d-q) \geq 2$,
which follows from \cref{assu:standing}, the objective $J$ is well defined and continuous.
The adjoint equation from \eqref{eq:adjpde} then reads
\begin{equation}\label{eq:adjpdeex}
    - \laplace p = \chi_D ( \bar y - y_d ) \quad \text{in } W^{-1,q'}(\Omega)
\end{equation}
and admits a unique solution $p \in W^{1,q'}_0(\Omega) \subset C(\overline{\Omega})$. 
With a little abuse of notation, we denote the solution operator of state equation with domain $\mathfrak{M}(\overline\Omega)$ 
and values in $L^2(D)$ by $S$, too. Then the solution operator of \eqref{eq:adjpdeex} is just the adjoint thereof. Now, since 
$\dist(\overline{D}, \omega_1) > 0$,
the adjoint state $p$ is harmonic in
$\Omega \setminus \overline{D}$.
We are going to repeat the arguments leading to \cite[Theorem~2.10]{GT01}
in order to get a slightly better estimate.
We define $r := \dist(\omega_1, \partial( \Omega \setminus \overline{D} ))$
and $C := \norm{p}_{C(\overline\Omega)}$.
From \cite[(2.31)]{GT01}, we get
\begin{equation}
  \label{eq:estimate_gradient}
  \norm{\nabla p(x)} \le \frac{2 d}{r} C \qquad\forall x \in \omega_1 + B_{r/2}(0).
\end{equation}
Now, let $v \in \R^d$ with $\norm{v} \le 1$ be arbitrary.
Since the Hessian $\nabla^2 p$ is harmonic in
$\Omega \setminus \overline{D}$,
we get
for all $y \in \omega_1$
the estimate
\begin{align*}
  \nabla^2 p(y) v
  &=
  \frac1{\omega_d (r/2)^d} \int_{B_{r/2}(y)} \nabla^2 p(x) v \, \d\lambda^{d}(x)
  \\&
  =
  \frac1{\omega_d (r/2)^d} \int_{\partial B_{r/2}(y)} (\nabla p(x)^\top v) n \, \d\HH^{d-1}(x)
  \\&
  \le
  \frac{2 d}{r} \sup_{x \in \partial B_{r/2}(y)} \abs{\nabla p(x)^\top v}
  \le
  \frac{4 d^2}{r^2} C,
\end{align*}
where $\omega_d$ is the volume of the unit ball of $\R^d$
and we used (in order):
mean value property of harmonic functions,
Gauß divergence theorem,
simple estimate of the integral
and
\eqref{eq:estimate_gradient}.
Thus,
\begin{equation}\label{eq:giltru}
    \| \nabla^2 p \|_{C(\omega_1)}
    \leq \frac{4d^2}{r^2} \, \|S\|_{\LL(\mathfrak{M}(\overline{\Omega}), L^2(D))}  \, \| \bar y - y_d\|_{L^2(D)}.
\end{equation}
Consequently \eqref{eq:kruemmungp} is satisfied provided that 
\begin{equation}\label{eq:kruemmungp2}
    \frac{4d^2 \, \|S\|_{\LL(\mathfrak{M}(\overline{\Omega}), L^2(D))}}{\dist(\omega_1, \partial( \Omega \setminus \overline{D} ))^2} \, \| \bar y - y_d\|_{L^2(D)} 
    < \alpha\, \beta.
\end{equation}
We thus arrive at the following result:

\begin{corollary}
    Suppose that, in addition to \cref{assu:strongconv}, $\omega_1\subset \Omega$ and
    the objective is given by \eqref{eq:tracking} with $\dist(\overline{D}, \omega_1) > 0$. 
    Let $\bar u \in \mathfrak{M}(\omega_1)$ be an optimal solution of \eqref{eq:optctrl} and suppose that 
    the associated state $\bar y$ satisfies \eqref{eq:kruemmungp2}. 
    Then, the assertions of \cref{thm:transmap} hold true, i.e., there exists a transport map from $u^0$ to $\bar u$, 
    which is H\"older continuous and even Lipschitz continuous, provided that $h\in C^{1,1}(\R^d)$.
\end{corollary}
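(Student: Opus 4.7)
The corollary essentially collects the discussion that immediately precedes it, so my plan is to verify, piece by piece, the hypotheses of \cref{thm:transmap} under the standing assumption of the corollary and then invoke that theorem.

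First, I would observe that $\omega_1 \subset \Omega \setminus \overline D$ because of $\omega_1 \subset \Omega$ and $\dist(\overline{D}, \omega_1) > 0$. Since the adjoint equation \eqref{eq:adjpdeex} has right-hand side $\chi_D(\bar y - y_d)$ which vanishes identically on $\Omega \setminus \overline D$, the distributional identity $-\laplace p = 0$ holds on the open set $\Omega \setminus \overline D$. Classical interior regularity for harmonic functions (Weyl's lemma together with $C^\infty$ smoothness of harmonic functions) therefore yields that $p$ is of class $C^\infty$ in a neighborhood of $\omega_1$. In particular, $p$ is continuously differentiable on $\omega_1$, as required by \cref{thm:transmap}.

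Next, I would translate the bound \eqref{eq:giltru} into the curvature estimate \eqref{eq:kruemmungp}. Since $\omega_1$ is convex, for any $\xi, \zeta \in \omega_1$ the segment $[\xi,\zeta]$ is contained in $\omega_1$, and the fundamental theorem of calculus combined with Cauchy–Schwarz gives
\begin{equation*}
    \dual{\nabla p(\xi) - \nabla p(\zeta)}{\xi - \zeta}
    =
    \int_0^1 \dual{\nabla^2 p(\zeta + t(\xi-\zeta))(\xi-\zeta)}{\xi-\zeta}\,\d t
    \ge
    - \|\nabla^2 p\|_{C(\omega_1)}\,\|\xi - \zeta\|^2 .
\end{equation*}
Hence \eqref{eq:kruemmungp} holds with $\kappa := \|\nabla^2 p\|_{C(\omega_1)}$. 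The bound \eqref{eq:giltru} combined with the standing assumption \eqref{eq:kruemmungp2} of the corollary then yields $\kappa < \alpha\,\beta$, exactly the condition required in \cref{thm:transmap}.

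With both the $C^1$-regularity of $p$ on $\omega_1$ and the curvature bound \eqref{eq:kruemmungp} verified, every hypothesis of \cref{thm:transmap} is met, and the existence, uniqueness, and $1/2$-Hölder continuity of the transport map $T$ follow directly. The Lipschitz strengthening under $h \in C^{1,1}(\R^d)$ also follows verbatim from the same theorem. I expect no genuine obstacle: the only subtle point is ensuring $p$ is differentiable on $\omega_1$ (rather than merely on the interior), which is why it was important above that $p$ is smooth on an open neighborhood of $\omega_1$ via $\dist(\overline D, \omega_1) > 0$; this same positive distance is also what allowed the a priori bound \eqref{eq:giltru} via the mean value property for the harmonic function $\nabla^2 p$.
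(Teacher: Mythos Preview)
Your proposal is correct and follows exactly the approach of the paper: the corollary is stated immediately after the discussion deriving \eqref{eq:giltru} and \eqref{eq:kruemmungp2}, and your argument simply makes explicit the two verifications (smoothness of the harmonic $p$ on a neighborhood of $\omega_1$, and the passage from the Hessian bound to \eqref{eq:kruemmungp} via the convexity of $\omega_1$) that the paper leaves implicit in the phrase ``Consequently \eqref{eq:kruemmungp} is satisfied.''
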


The condition in \eqref{eq:kruemmungp2} can be checked a priori. In the Wasserstein 2-case for instance, where $h(\xi) = \frac{1}{2} \|\xi\|^2$, 
we have $\beta = 1$. Moreover, the constraints in \eqref{eq:kant} imply
\begin{equation*}
    \|\bar y\|_{L^2(D)} \leq \|S\|_{\LL(\mathfrak{M}(\overline{\Omega}), L^2(D))}\, |u^0|(\omega_0)
\end{equation*}
and thus \eqref{eq:kruemmungp2} is fulfilled, provided that the Tikhonov regularization parameter $\alpha$ satisfies
\begin{equation}\label{eq:tikhonov}
    \alpha > \frac{4\,d^2 \, \|S\|_{\LL(\mathfrak{M}(\overline{\Omega}), L^2(D))}}{\dist(\omega_1, \partial( \Omega \setminus \overline{D} ))^2} \, 
    \Big( \|y_d\|_{L^2(D)} + \|S\|_{\LL(\mathfrak{M}(\overline{\Omega}), L^2(D))}\, |u^0|(\omega_0)\Big).
\end{equation}

\begin{corollary}
    Let \cref{assu:strongconv} hold true and assume that $\omega_1 \subset \Omega$. Let $\wdist^c_{u^0}$ be given by the Wasserstein 2-distance 
    and $J$ be given by the tracking type objective in \eqref{eq:tracking} 
    with $\dist(\overline{D}, \omega_1) > 0$. Suppose moreover that the Tikhonov parameter $\alpha$ fulfills \eqref{eq:tikhonov}. 
    Then, for every solution $\bar u$ of \eqref{eq:optctrl}, there exists a transport map from $u^0$ to $\bar u$, i.e., $\bar u = T_{\#} u^0$, which is Lipschitz 
    continuous such that $\dim_{\HH}(\supp(\bar u)) \leq \dim_{\HH}(\supp(u^0))$.
\end{corollary}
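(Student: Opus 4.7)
The plan is to reduce the claim to the preceding corollary by verifying the curvature smallness condition \eqref{eq:kruemmungp2} a priori under the hypothesis \eqref{eq:tikhonov}. The first step is to check that $h(\xi) = \tfrac{1}{2}\|\xi\|^2$ fits the setting: it satisfies \cref{assu:strongconv} with $\beta = 1$, and $\nabla h(\xi) = \xi$ is Lipschitz continuous on all of $\R^{d}$, in particular on the compact set $\omega_0 - \omega_1$. Thus all structural assumptions of the preceding corollary except possibly \eqref{eq:kruemmungp2} are already in force, and the improved (Lipschitz) regularity conclusion of \cref{thm:transmap} will be available once \eqref{eq:kruemmungp2} is secured.

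The next step is the a priori estimate on $\|\bar y - y_d\|_{L^2(D)}$. Since $\bar u$ attains the infimum (by \cref{thm:existoptctrl}) and the optimal value is finite, the regularization term $\wdist^c_{u^0}(\bar u)$ is finite, which by construction of the transport distance forces $\bar u \ge 0$ and $|\bar u|(\omega_1) = |u^0|(\omega_0)$. Combined with $\bar y = S \bar u$ and the triangle inequality, this yields
\begin{equation*}
    \|\bar y - y_d\|_{L^2(D)}
    \le
    \|y_d\|_{L^2(D)} + \|S\|_{\LL(\frakM(\overline\Omega),L^2(D))}\,|u^0|(\omega_0).
\end{equation*}
Plugging this bound into \eqref{eq:giltru} and using $\beta = 1$ shows that the assumed lower bound \eqref{eq:tikhonov} on the Tikhonov parameter is exactly what is needed to guarantee \eqref{eq:kruemmungp2}.

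With \eqref{eq:kruemmungp2} established, the previous corollary applies, producing a Lipschitz continuous transport map $T$ with $\bar u = T_{\#} u^0$. For the Hausdorff dimension estimate, \cref{lem:suppu} gives $\supp(\bar u) = T(\supp(u^0))$, and Lipschitz maps do not increase Hausdorff dimension — this is the content of \cite[Proposition~3.5]{Maggi2012} and was already invoked in the paragraph following \cref{lem:suppu} (with H\"older exponent $\theta = 1$). Hence $\dim_{\HH}(\supp(\bar u)) \le \dim_{\HH}(\supp(u^0))$.

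There is no real obstacle: the whole argument amounts to noting that the explicit bound \eqref{eq:tikhonov} was constructed precisely to absorb the worst case estimate of $\|\bar y\|_{L^2(D)}$ derived from the mass constraint $|\bar u|(\omega_1) = |u^0|(\omega_0)$, so that the curvature hypothesis of \cref{thm:transmap} is automatically satisfied and the preceding chain of results can be applied verbatim.
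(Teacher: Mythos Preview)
Your proof is correct and follows essentially the same approach as the paper, which derives the corollary from the discussion immediately preceding it: in the Wasserstein-2 case one has $\beta = 1$ and $\nabla h$ Lipschitz, the mass constraint $|\bar u|(\omega_1) = |u^0|(\omega_0)$ yields the a priori bound on $\|\bar y\|_{L^2(D)}$, and then \eqref{eq:tikhonov} forces \eqref{eq:kruemmungp2}, so that the preceding corollary and the Hausdorff-dimension argument after \cref{lem:suppu} apply. One minor imprecision: the a priori bound on $\|\bar y - y_d\|_{L^2(D)}$ should be inserted directly into the inequality \eqref{eq:kruemmungp2} rather than into \eqref{eq:giltru}, but the logic is unaffected.
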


\begin{remark}
    Note that the two above corollaries rely on the assumption that the observation domain $D$ and the control domain $\omega_1$ are separated. 
    In this case however, the theory of \cref{sec:tracking} does not apply, as it is essential for the analysis underlying \cref{sec:tracking} 
    that the observation domain contains the control domain, cf.\ the proof of \cref{lem:yboundsupp}. 
\end{remark}


\section{Power-type cost and absolutely continuous prior}\label{sec:abscont}

Throughout this section, we impose the following

\begin{assumption}\label{assu:u0abscont}
    Beside our standing assumptions, we suppose that
    $d_0 = d_1 =: d$.
    We moreover assume that $u^0 \ll \lambda^d$ and the associated density 
     is essentially bounded, i.e., $U^0 := \frac{\d u^0}{\d \lambda^d} \in L^\infty(\omega_0)$. 
\end{assumption}
Under this assumption, Brenier's theorem, cf.\ \cite[Theorem~1.17]{santambrogio}, yields the existence of an optimal transport map  $T: \omega_0 \to \omega_1$ such that 
$\bar u = T_\# u^0$ for an optimal control $\bar u$.
Instead of using this map $T$,
we will see that
the optimality system from \cref{thm:fon}
(and its consequence \cref{lem:support_plan})
allows us to apply Brenier's method of proof in the other 
direction, too.

We begin with an interior estimate for $\bar u$
which requires some regularity of the adjoint state.

\begin{theorem}\label{thm:uabscont}
    Let \cref{assu:u0abscont} hold true and
    assume that the transportation costs are given by $c(x,\xi) = \frac{1}{\gamma} \|\xi-x\|^\gamma$
    for some $\gamma \in (1,2]$.
    Let $\bar u \in \frakM(\omega_1)$ be an optimal control
    with associated adjoint state $p \in W^{1,q'}_0(\Omega)$.
    We further suppose that $p \in W^{2,r}_{\textup{loc}}(\interior(\omega_1))$ with $r \in (d,\infty]$.
    Then we have
    $\bar u\mres \interior(\omega_1) \ll \lambda^d$.
    The associated density function satisfies
    \begin{equation}\label{eq:densities}
        \bar U_{\interior(\omega_1)} := \frac{\d \bar u \mres \interior(\omega_1)}{\d \lambda^d}
        \in
        L^{r /d}_{\textup{loc}}(\interior(\omega_1))
        .
    \end{equation}
    If $\interior(\omega_1)$ satisfies the cone condition and $p \in W^{2,r}(\interior(\omega_1))$ with $r \in (d,\infty]$, 
    then we even have $\bar U_{\interior(\omega_1)} \in L^{r /d}(\interior(\omega_1))$.
\end{theorem}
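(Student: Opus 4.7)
My plan is to invert Brenier's classical argument: rather than pushing $u^0$ forward by a transport map and reading off the density of the pushforward, I would derive a map $S \colon \interior(\omega_1) \to \omega_0$ directly from the adjoint $p$ via the first-order optimality condition and use it to pull the density of $u^0$ back to $\bar u$ on $\interior(\omega_1)$.

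The starting point is \cref{lem:support_plan}. For an optimal plan $\bar\pi \in \KK_c(u^0, \bar u)$ and any $(x,\xi) \in \supp(\bar\pi)$ with $\xi \in \interior(\omega_1)$, the point $\xi$ is an interior minimiser of $\eta \mapsto \tfrac{1}{\gamma}\|\eta - x\|^\gamma + \tfrac{1}{\alpha}p(\eta)$, so the first-order condition
\begin{equation*}
    \|\xi - x\|^{\gamma-2}(\xi - x) + \tfrac{1}{\alpha}\nabla p(\xi) = 0
\end{equation*}
holds; note that $\nabla p$ has a continuous representative on $\interior(\omega_1)$ by the Sobolev embedding $W^{1,r}_{\textup{loc}} \embed C^0_{\textup{loc}}$, since $r > d$. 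For $\gamma \in (1,2]$, the monotone map $v \mapsto \|v\|^{\gamma-2}v$ is a bijection on $\R^d$ with inverse $F(w) := \|w\|^{(2-\gamma)/(\gamma-1)}w$, and $F$ is continuously differentiable on all of $\R^d$ (with $DF(0) = 0$ if $\gamma < 2$ and $F = \id$ if $\gamma = 2$). Solving the first-order condition for $x$ yields $x = S(\xi)$ with
\begin{equation*}
    S(\xi) := \xi + F\bigl(\tfrac{1}{\alpha}\nabla p(\xi)\bigr).
\end{equation*}
The chain rule for Sobolev functions, applied to $F \circ (\nabla p/\alpha)$ with $F$ locally Lipschitz, gives $S \in W^{1,r}_{\textup{loc}}(\interior(\omega_1);\R^d)$ with a continuous representative.

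Fix now a compact $K \subset \interior(\omega_1)$ and a Borel set $A \subset K$. The derived identity forces $\supp(\bar\pi) \cap (\omega_0 \times A) \subset S(A) \times A$, so, using that $u^0$ is the first marginal of $\bar\pi$,
\begin{equation*}
    \bar u(A) = \bar\pi(\omega_0 \times A) \leq \bar\pi\bigl(S(A) \times \omega_1\bigr) = u^0(S(A)).
\end{equation*}
Applying the area formula for continuous Sobolev maps (valid since $r > d$) in the weighted form
\begin{equation*}
    u^0(S(A)) = \int_{S(A)} U^0 \,\d \lambda^d \leq \int_A U^0(S(\xi)) \,|\det DS(\xi)| \,\d\lambda^d(\xi),
\end{equation*}
together with $U^0 \in L^\infty(\omega_0)$, gives $\bar u(A) \leq \|U^0\|_\infty \int_A |\det DS| \,\d\lambda^d$. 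Since the right-hand side vanishes whenever $\lambda^d(A) = 0$, inner regularity of $\bar u$ (reducing to compact $A$) together with the Radon--Nikodym theorem yields $\bar u \mres K \ll \lambda^d$ with pointwise density bound $\bar U_K \leq \|U^0\|_\infty |\det DS|$. Since $DS = I + DF(\nabla p/\alpha)\cdot \nabla^2 p/\alpha$ with $DF$ bounded on the compact image of $\nabla p$ over $K$, one has $|\det DS| \in L^{r/d}(K)$, proving \eqref{eq:densities}. Under the cone condition and the global hypothesis $p \in W^{2,r}(\interior(\omega_1))$, the Sobolev embedding extends $\nabla p$ continuously to $\overline{\interior(\omega_1)}$, so $S$ is continuous up to the boundary and the same argument yields $\bar U_{\interior(\omega_1)} \in L^{r/d}(\interior(\omega_1))$ globally.

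The main technical hurdle will be making the area-formula step fully rigorous for a map that is only $W^{1,r}$ (not Lipschitz) and whose Jacobian may vanish along critical sets of $\nabla p$; this requires invoking a version of the area formula valid for continuous Sobolev maps with $r > d$, and a careful check that $S(A)$ is measurable (as a continuous image of a Borel set, hence analytic and in particular $u^0$-measurable) so that $u^0(S(A))$ is well-defined.
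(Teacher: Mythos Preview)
Your proposal is correct and follows essentially the same route as the paper: derive the inverse map $S$ (the paper calls it $\widetilde T$) from the first-order condition at interior points, show $S \in W^{1,r}_{\textup{loc}}$ via the chain rule for Sobolev functions, bound $\bar u(A) \le u^0(S(A))$ through the support of $\bar\pi$, and then apply a change-of-variables formula valid for continuous $W^{1,r}$-maps with $r>d$ (the paper cites Mal\'y's results in \cite{Maly1994} for this step and Marcus--Mizel for the chain rule). The measurability concern you flag is handled in the paper simply by invoking the multiplicity-function version of the area formula, so no separate argument about analytic sets is needed.
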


\begin{proof}
    We denote by
    $\bar\pi$ the optimal transport plan (which is unique in this case, see \cite[Theorem~1.17]{santambrogio}).  
    We utilize \eqref{eq:optimality_support_2}
    which now reads
    \begin{equation}\label{eq:supppi2}
        \supp(\bar\pi)  \subset \Big\{(x,\xi) \in \omega_0 \times \omega_1 :
        \tfrac{1}{\gamma}\, \|\xi-x\|^\gamma - \psi(\xi) = \inf_{\eta \in \omega_1} \tfrac{1}{\gamma} \, \|\eta-x\|^\gamma - \psi(\eta)\Big\},
    \end{equation}
    where $\psi$ is defined by $\psi = - p /\alpha$.
    Let now $(x, \xi) \in \supp(\bar\pi)$ with $\xi\in \interior(\omega_1)$ be arbitrary. 
    Then, due to $p \in W^{2,r}_{\textup{loc}}(\interior(\omega_1)) \embed C^1(\interior(\omega_1))$, the necessary optimality conditions for the minimization problem in \eqref{eq:supppi2} read
    \begin{equation*}
        0 = \norm{\xi - x}^{\gamma - 2} (\xi - x) - \nabla\psi(\xi).
    \end{equation*}
    Rearranging yields
     \begin{equation}\label{eq:deftildeT}
         x = \xi - \norm{\nabla\psi(\xi)}^{\frac{2-\gamma}{\gamma-1}} \nabla\psi(\xi) =: \widetilde T(\xi)
         \quad \text{for all } (x, \xi) \in \supp(\bar\pi) \cap (\omega_0 \times \interior(\omega_1)).
    \end{equation}
    In order to show that $\bar u$ is absolutely continuous in the interior of $\omega_1$,
    let a Borel set $A \subset \interior(\omega_1)$ be given.
    Since $\bar u$ is the marginal of $\bar \pi$, we have
    \begin{equation*}
        \bar u(A)
        =
        \bar\pi( \omega_0 \times A )
        =
        \bar\pi( \supp(\bar\pi) \cap (\omega_0 \times A) )
        ,
    \end{equation*}
    where we also used \eqref{eq:prop_support}.
    From \eqref{eq:deftildeT}
    we get
    \begin{equation*}
        \supp(\bar\pi) \cap (\omega_0 \times A)
        \subset
        \set{ (\widetilde T(\xi), \xi) \given \xi \in A }
        \subset
        \widetilde T(A) \times \omega_1
        .
    \end{equation*}
    Together with the previous equality,
    this yields
    \begin{equation}
        \label{eq:estimate_bar_u_A}
        \bar u(A)
        \le
        \bar\pi( \widetilde T(A) \times \omega_1 )
        =
        u^0( \widetilde T(A) )
        =
        \int_{\widetilde T(A)} U^0(x)\,\d \lambda^d(x)
        .
    \end{equation}
    Now let $K \subset \R^d$ be an arbitrary compact subset of $\interior(\omega_1)$. Then, by compactness, there exists 
    a finite cover of $K$ of open balls and, w.l.o.g., we may assume that the radii of these balls are so small so that the closure of each such ball 
    is contained in $\interior(\omega_1)$. Now let $B$ be one of these balls. Then, due to $p \in W^{2,r}_{\textup{loc}}(\interior(\omega_1))$, 
    we have $\nabla \psi \in W^{1,r}(B)$ and, since $B$ has a regular boundary, this in turn implies  $\nabla \psi \in L^\infty(B)$ thanks to to $r > d$.
    Due to $\gamma \in (1,2]$, the function
    $\R^d \ni v \mapsto \norm{v}^{(2-\gamma)/(\gamma-1)} v \in \R^d$ is Lipschitz on bounded sets.
    Consequently, the chain rule from \cite[Theorem~1]{MarcusMizel1979} gives $\widetilde T \in W^{1,r}(B;\R^d)$,
    where we again used that $B$ admits a regular boundary and thus satisfies the cone condition. 
    Since $B$ was an arbitrary ball from the finite cover of $K$ and $K$ was an arbitrary compact subset of $\interior(\omega_1)$, 
    this implies $\widetilde T \in W^{1,r}_{\textup{loc}}(\interior(\omega_1);\R^d)$.
    By assumption, $r > d$,
    which implies that $\widetilde T$ is continuous on $\interior(\omega_1)$
    and Hölder continuous in a neighborhood of every point from $\interior(\omega_1)$.
    Consequently, we can apply the change-of-variables formula
    from \cite[Prop.~1.1 and Thm.~1.3]{Maly1994}.
    Therein, the multiplicity function
    \begin{equation*}
        N(x, \widetilde T, A)
        =
        \#\set{ \xi \in A \given \widetilde T(\xi) = x }
    \end{equation*}
    appears and we trivially have
    $N(x, \widetilde T, A) \ge 1$ for all $x \in \widetilde T(A)$.
    Consequently,
    \begin{align*}
        \int_{\widetilde T(A)} U^0(x)\,\d \lambda^d(x)
        &\le
        \int_{\widetilde T(A)} N(x, \widetilde T, A) U^0(x)\,\d \lambda^d(x)
        \\&=
        \int_{A} U^0(\widetilde T(\xi)) |\det D \widetilde T(\xi)|\,\d \lambda^d(\xi) 
        .
    \end{align*}
    Together with \eqref{eq:estimate_bar_u_A},
    we get
    \begin{equation}\label{eq:area}
        \bar u (A) \leq \int_{A} U^0(\widetilde T(\xi)) |\det D \widetilde T(\xi)|\,\d \lambda^d(\xi) 
        \quad \forall\, A \in \BB(\interior(\omega_1)) .
    \end{equation}
    First of all, this tells us that indeed $\bar u \mres \interior(\omega_1) \ll \lambda^d$ as claimed.
    There is thus a density function 
    $\bar U_{\interior(\omega_1)} := \frac{\d \bar u \mmres \interior(\omega_1)}{\d \lambda^d}\in L^1(\interior(\omega_1))$. 
    Since \eqref{eq:area} holds for every Borel subset of $\interior(\omega_1)$, we deduce 
    \begin{equation}\label{eq:barUpointwise}
        0 \leq \bar U_{\interior(\omega_1)} \leq (U^0 \circ \widetilde T) |\det D \widetilde T| \quad \text{$\lambda^d$-a.e.\ in $\interior(\omega_1)$},
    \end{equation}        
    where the non-negativity of $\bar U$ follows from $\bar u \geq 0$. This implies
    \begin{align*}
        |\bar{U}_{\interior(\omega_1)}(\xi)|^{r /d}
        & \le |U^0(\widetilde T(\xi))|^{r  /d} \, |\det D\widetilde T(\xi)|^{r /d} \\ 
        & \le C \, \|U^0\|_{L^\infty(\omega_0)}^{r  /d} \, |D\widetilde T(\xi)|^{r}
    \end{align*}
    for $\lambda^d$-a.a.\ $\xi \in \interior(\omega_1)$.
    This shows the regularity claims for
    $\bar U_{\interior(\omega_1)}$. 
    Note that for the global regularity result on $\interior(\omega_1)$, one needs that $\interior(\omega_1)$ satisfies the cone condition 
    in order to apply the chain rule and Sobolev embeddings on that set as we did for the compact subset $K$ above.
\end{proof}

For boundary estimates, we need the convexity of the domain $\Omega$
and an auxiliary lemma.
\begin{lemma}
	\label{lem:convex_set_stuff}
	Let $\Omega \subset \R^d$, $d \in \N$, be convex, open and bounded.
	Let a Borel set $A \subset \partial\Omega$ be given
	and define
	\begin{equation*}
		B
		:=
		\set{
			x \in \overline\Omega
			\given
			\exists y \in A:
			\dist(x, \partial\Omega) = \norm{x - y}
		}
		.
	\end{equation*}
	Then,
	$B$ is Lebesgue measurable and
	\begin{equation*}
		\lambda^d(B)
		\le
		r \HH^{d-1}(A)
		,
	\end{equation*}
	where $r := \max_{x \in \Omega} \dist(x, \partial\Omega)$.
\end{lemma}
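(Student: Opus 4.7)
Our plan is to combine the coarea formula with an inward-normal parametrization of $\overline\Omega$ whose area element is contracted by the non-negative principal curvatures of $\partial\Omega$.

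For measurability, note that $(x, y) \mapsto \|x - y\| - \dist(x, \partial\Omega)$ is continuous on $\overline\Omega \times \partial\Omega$, so $\{(x, y) \in \overline\Omega \times A : \|x-y\| = \dist(x, \partial\Omega)\}$ is Borel and $B$ is its projection onto the first coordinate; hence $B$ is analytic, and thus Lebesgue measurable. Since $\dist(\cdot, \partial\Omega)$ is $1$-Lipschitz with unit gradient $\lambda^d$-a.e.\ on $\Omega$, the coarea formula delivers
\begin{equation*}
\lambda^d(B) = \int_0^r \HH^{d-1}\bigl(B \cap S_t\bigr)\,\d t, \qquad S_t := \{x \in \overline\Omega : \dist(x, \partial\Omega) = t\},
\end{equation*}
so it suffices to prove $\HH^{d-1}(B \cap S_t) \leq \HH^{d-1}(A)$ for every $t \in (0, r)$.

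For this pointwise bound, let $\partial\Omega^*$ denote the set of boundary points at which $\partial\Omega$ admits a unique outward unit normal; by convexity of $\Omega$, $\partial\Omega^*$ has full $\HH^{d-1}$-measure, and by Alexandrov's theorem $\partial\Omega$ is twice differentiable $\HH^{d-1}$-a.e.\ on $\partial\Omega^*$, with non-negative principal curvatures $\kappa_1(y), \dots, \kappa_{d-1}(y)$. Writing $\nu(y)$ for the inward unit normal, we consider $\sigma_t(y) := y + t\,\nu(y)$; in principal-curvature coordinates one computes
\begin{equation*}
|J_{\sigma_t}(y)| = \prod_{i=1}^{d-1}\bigl(1 - t\,\kappa_i(y)\bigr) \in [0, 1],
\end{equation*}
where non-negativity of each factor holds throughout the regime in which $\sigma_t(y) \in S_t$ (which forces $t \leq 1/\kappa_i(y)$ by the focal-distance property of convex surfaces). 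For convex $\Omega$, a standard convex-analytic argument shows that $\HH^{d-1}$-a.e.\ point of $S_t$ has a unique nearest boundary point and that this point lies in $\partial\Omega^*$; consequently $B \cap S_t \subset \sigma_t(A \cap \partial\Omega^*)$ up to an $\HH^{d-1}$-null set. The area formula then yields
\begin{equation*}
\HH^{d-1}(B \cap S_t) \leq \int_{A \cap \partial\Omega^*} |J_{\sigma_t}(y)|\,\d\HH^{d-1}(y) \leq \HH^{d-1}(A),
\end{equation*}
and integrating over $t \in (0, r)$ gives $\lambda^d(B) \leq r\,\HH^{d-1}(A)$.

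The main technical obstacle is the rigorous justification of two classical convex-geometric ingredients: Alexandrov's twice-differentiability of $\partial\Omega$ together with the Jacobian formula it delivers, and the $\lambda^d$-negligibility of the set of interior points whose nearest boundary point is either non-unique or not in $\partial\Omega^*$. Both facts are standard in the theory of convex bodies, but their careful combined use is what is needed to guarantee that the inward-normal parametrization captures all of $B$ except on a null set and to secure the crucial bound $|J_{\sigma_t}| \leq 1$.
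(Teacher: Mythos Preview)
Your overall strategy---coarea to reduce to the slice estimate $\HH^{d-1}(B\cap S_t)\le \HH^{d-1}(A)$---matches the paper's, and your measurability argument via analytic sets is fine (the paper instead invokes measurable-selection machinery from \cite{AubinFrankowska2009}). The divergence is in how the slice estimate is proved.

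The paper's argument is considerably more elementary than yours. Writing $\overline\Omega_t:=\{x:\dist(x,\partial\Omega)\ge t\}$, which is closed and convex since the signed distance is convex, the paper shows directly that $B\cap S_t\subset \operatorname{proj}_{\overline\Omega_t}(A)$: if $x\in B\cap S_t$ and $y\in A$ satisfies $\|x-y\|=t$, then $\|\operatorname{proj}_{\overline\Omega_t}(y)-y\|\le t$; the $1$-Lipschitzness of the distance forces equality, and uniqueness of the metric projection onto the convex set $\overline\Omega_t$ yields $x=\operatorname{proj}_{\overline\Omega_t}(y)$. Since the projection is $1$-Lipschitz, $\HH^{d-1}(B\cap S_t)\le\HH^{d-1}(A)$ follows at once---no second-order structure, no curvatures, no area formula.

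Your route, by contrast, has a genuine gap at the area-formula step. The inequality $\HH^{d-1}(\sigma_t(E))\le\int_E|J_{\sigma_t}|\,\d\HH^{d-1}$ that you use requires $\sigma_t(y)=y+t\,\nu(y)$ to be Lipschitz on $\partial\Omega^*$, i.e., the Gauss map $\nu$ to be Lipschitz. For a general bounded convex set this fails: if $\partial\Omega$ is locally the graph of $a\mapsto |a|^{3/2}$, then $\nu$ is only H\"older-$\tfrac12$. Alexandrov's theorem supplies a.e.\ second-order differentiability, but a.e.\ differentiability alone does not give the desired area inequality; you would additionally need Lusin's condition~(N) for $\sigma_t$, which you neither state nor verify. (Incidentally, one of your two acknowledged ``technical obstacles'' is in fact empty: an inner touching ball at $y\in\partial\Omega$ forces the supporting hyperplane at $y$ to be unique, so the nearest boundary point to any interior point of a convex set automatically lies in $\partial\Omega^*$---no null-set exception is needed there.) The paper's projection argument sidesteps all of this.
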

\begin{proof}
	We define the set-valued map
	$F \colon \overline\Omega \rightrightarrows \partial\Omega$,
	\begin{equation*}
		F(x) := \set{ y \in \partial\Omega \given \dist(x, \partial\Omega) = \norm{x - y} }.
	\end{equation*}
	Then, it is clear that $F$ has nonempty and closed images.
	Its graph
	\begin{equation*}
		\graph(F) =
		\set{ (x, y) \in \overline\Omega \times \partial\Omega \given \dist(x, \partial\Omega) = \norm{x - y} }
	\end{equation*}
	is a closed subset of $\overline\Omega \times \partial\Omega$,
	consequently, it is Borel.
	Now, we can apply
	\cite[Theorem~8.1.4]{AubinFrankowska2009}
	to get the measurability of $F$
	and, in particular,
	$B = \set{x \in \overline\Omega \given F(x) \cap A \ne \emptyset } = F^{-1}(A)$
	is a Lebesgue measurable subset of $\overline\Omega$.

	We denote by $s \colon \R^d \to \R$ the signed distance function of $\Omega$.
	It is easy to check that $s$ is convex and has Lipschitz constant $1$.
	Rademacher's theorem implies that $s$ is differentiable a.e.\ and
	from \cite[Theorem~4.8]{Federer1959}
	we get $\norm{\nabla s} = 1$ at all points of differentiability.
	Consequently, we can apply the coarea formula
	from \cite[Theorem~18.1]{Maggi2012}
	to obtain
	\begin{equation*}
		\lambda^d(B)
		=
		\int_B \norm{\nabla s} \, \d\lambda^d
		=
		\int_{\R} \HH^{d-1}( B \cap \set{s = t}) \, \d t
		=
		\int_{-r}^0 \HH^{d-1}( B \cap \set{s = t}) \, \d t
		,
	\end{equation*}
	where the last identity uses that $s$ takes only values in $[-r,0]$ on $\overline\Omega$.
		Note that the coarea formula in \cite{Maggi2012}
		is only stated for Borel sets.
		However, we can approximate the Lebesgue measurable set $B$
		with Borel sets $B_1, B_2$ such that
		$B_1 \subset B \subset B_2$
		and
		$\lambda^d(B_1) = \lambda^d(B) = \lambda^d(B_2)$.
		An application of the Borelian coarea formula to $B_1$
		and $B_2$ yields the above formula for $B$.

	Now, for any $t \in (-r,0)$,
	the set $\set{s \le t}$ is convex and closed.
	We claim
	\begin{equation}
		\label{eq:some_sets}
		B \cap \set{s = t}
		\subset
		\proj_{\set{s \le t}}(A)
		.
	\end{equation}
	Let $x \in B \cap \set{s = t}$ be given,
	i.e., $\dist(x, \partial\Omega) = -t$.
	The definition of $B$ implies the existence of $y \in A$
	with $\norm{x - y} = -t$.
	Set $z := \proj_{\set{s \le t}}(y)$.
	Then, $\norm{z - y} \le -t$, since $\norm{x - y} = -t$.
	Moreover,
	$0 = s(y) \le s(z) + \norm{z - y} \le t - t = 0$.
	This shows $\norm{z - y} = -t$
	and, thus, $x = z \in \proj_{\set{s \le t}}(A)$.
	Hence, \eqref{eq:some_sets} holds.

	Now, since $\proj_{\set{s \le t}}$ is Lipschitz with constant $1$,
	we get
	\begin{equation*}
		\HH^{d-1}(B \cap \set{s = t})
		\le
		\HH^{d-1}(A),
	\end{equation*}
	see
	\cite[Proposition~3.5]{Maggi2012}.
	Using this estimate in the coarea formula above yields the claim.
\end{proof}

In the case
$\omega_0 \subset \omega_1 = \overline\Omega$,
this lemma can be used to prove the regularity of the boundary part of
locally optimal controls.

\begin{theorem}\label{thm:uabscont_bdry}
    Let \cref{assu:u0abscont} hold true and suppose that
    $\Omega$ is convex with boundary $\Gamma := \partial\Omega$ and
    $\omega_0 \subset \omega_1 = \overline\Omega$.
    We further assume that the transport cost is given by $c(x,\xi) = h(\norm{x - \xi})$
    for some strictly increasing function $h$.
    Consider a locally optimal control $\bar u \in \frakM(\overline{\Omega})$
    together with the associated adjoint state $p \in W_0^{1,q'}(\Omega) \embeds C_0(\Omega)$.
    Then,
    $\bar u \mres \Gamma \ll \HH^{d-1}$.
    The associated density function satisfies
    \begin{equation}\label{eq:densities}
        \bar U_\Gamma :=  \frac{\d \bar u \mres \Gamma}{\d \HH^{d-1}} \in L^{\infty}(\Gamma;\HH^{d-1}) .
    \end{equation}
\end{theorem}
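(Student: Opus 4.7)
The plan is to estimate $\bar u(A)$ for an arbitrary Borel set $A \subset \Gamma$ and show the bound
\begin{equation*}
    \bar u(A) \le \|U^0\|_{L^\infty(\omega_0)}\, r\, \HH^{d-1}(A),
    \qquad r := \max_{x \in \Omega}\dist(x, \Gamma),
\end{equation*}
from which the conclusion (including the density bound) follows by the Radon--Nikodým theorem. The key observation to unlock this estimate is that the adjoint state $p \in C_0(\Omega)$ vanishes on $\Gamma$, which collapses the support condition \eqref{eq:optimality_support_2} into a purely geometric statement about nearest boundary points.

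First I would pick any optimal transport plan $\bar\pi \in \KK_c(u^0,\bar u)$, so that $\bar u(A) = \bar\pi(\omega_0 \times A)$, and observe, via \eqref{eq:prop_support}, that $\bar\pi$ is concentrated on $\supp(\bar\pi)$. Then, for every $(x,\xi) \in \supp(\bar\pi)$ with $\xi \in A \subset \Gamma$, \cref{lem:support_plan} gives
\begin{equation*}
    c(x,\xi) + \tfrac{1}{\alpha} p(\xi)
    \le
    c(x,\eta) + \tfrac{1}{\alpha} p(\eta)
    \qquad \forall \eta \in \overline\Omega.
\end{equation*}
Restricting to $\eta \in \Gamma$ and using $p|_\Gamma = 0$ (which follows from the embedding $W^{1,q'}_0(\Omega) \embed C_0(\Omega)$) reduces this to $h(\|x-\xi\|) \le h(\|x-\eta\|)$ for every $\eta \in \Gamma$. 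Since $h$ is strictly increasing, $\xi$ realizes $\dist(x,\Gamma)$, so $x$ lies in the set $B$ from \cref{lem:convex_set_stuff} associated with the choice of $A$.

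Consequently $\supp(\bar\pi) \cap (\omega_0 \times A) \subset B \times \overline\Omega$, and thus
\begin{equation*}
    \bar u(A)
    =
    \bar\pi\bigl(\supp(\bar\pi) \cap (\omega_0 \times A)\bigr)
    \le
    \bar\pi(B \times \overline\Omega)
    =
    u^0(B)
    \le
    \|U^0\|_{L^\infty(\omega_0)}\, \lambda^d(B),
\end{equation*}
where the last inequality uses \cref{assu:u0abscont}. Now \cref{lem:convex_set_stuff} gives $\lambda^d(B) \le r\, \HH^{d-1}(A)$, finishing the estimate. Since $A$ was an arbitrary Borel subset of $\Gamma$, this proves $\bar u \mres \Gamma \ll \HH^{d-1}$ with $\bar U_\Gamma \in L^\infty(\Gamma;\HH^{d-1})$ and $\|\bar U_\Gamma\|_{L^\infty} \le r\,\|U^0\|_{L^\infty(\omega_0)}$.

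The main obstacle I anticipate is purely bookkeeping: one has to be careful that the set of first coordinates of $\supp(\bar\pi) \cap (\omega_0 \times A)$ need not be Borel, but this is entirely sidestepped by the fact that \cref{lem:convex_set_stuff} already produces a Lebesgue-measurable superset $B$ with the right measure bound, so no additional measurability arguments are needed. The only subtle substantive step is the use of $p|_\Gamma = 0$, without which the minimization in \eqref{eq:optimality_support_2} would involve the unknown values of $p$ on $\Gamma$ and the clean reduction to \cref{lem:convex_set_stuff} would fail.
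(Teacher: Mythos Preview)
Your proof is correct and follows the same approach as the paper. The one subtlety you brush aside is that $B$ from \cref{lem:convex_set_stuff} is only Lebesgue (not Borel) measurable, so $\bar\pi(B \times \overline\Omega)$ is not a priori defined; the paper resolves this by passing to a Borel superset $\widetilde B \supset B$ with $\lambda^d(\widetilde B) = \lambda^d(B)$ before evaluating $\bar\pi$ and $u^0$ --- precisely the small ``additional measurability argument'' you claim is unnecessary.
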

\begin{proof}
    Next take an arbitrary $(x, \xi) \in \supp(\bar\pi)$ with $\xi \in \Gamma$.
    As in the proof of \cref{thm:uabscont},
    we utilize
    \eqref{eq:optimality_support_2}.
    Using $p = 0$ on $\Gamma$
    this implies
    \begin{equation*}
        h(\norm{\xi - x}) \le \inf_{\eta \in \Gamma} h(\norm{\eta - x}),
    \end{equation*}
    i.e.,
    $\dist(x, \Gamma) = \norm{\xi - x}$, since $h$ is strictly increasing by assumption.
    For a Borel subset $A \subset \Gamma$,
    we define
    \begin{equation*}
        B
        :=
        \set{
            x \in \overline\Omega
            \given
            \exists y \in A:
            \dist(x, \Gamma) = \norm{x - y}
        }
        .
    \end{equation*}
    From \cref{lem:convex_set_stuff},
    we get the Lebesgue measurability of $B$
    and $\lambda^d(B) \le r \HH^{d-1}(A)$
    for some $r > 0$.
    Consequently, there is a Borel set $\widetilde B \subset \overline\Omega$
    with $B \subset \widetilde B$ and $\lambda^d(\widetilde B) = \lambda^d(B) $.
    The above argument implies
    \begin{equation*}
        (\overline\Omega \times A) \cap \supp(\bar\pi)
        \subset
        B \times A
        \subset
        \widetilde B \times A
        .
    \end{equation*}
    Consequently,
    \begin{align*}
        \bar u(A)
        &=
        \bar\pi( \overline\Omega \times A )
        =
        \bar\pi( (\overline\Omega \times A) \cap \supp(\bar\pi) )
        \le
        \bar\pi( \widetilde B \times A )
        \le
        \bar\pi( \widetilde B \times \overline\Omega )
        =
        u^0( \widetilde B )
        \\&
        \le
        r \norm{U^0}_{L^\infty(\Omega)} \HH^{d-1}(A)
    \end{align*}
    for all Borel sets $A \subset \Gamma$.
    An application of the Radon--Nikodým theorem
   (using that $\HH^{d-1} \mres \Gamma$ is a finite measure)
    yields the claim.
\end{proof}

We end this section by applying the above two theorems to a problem with tracking type objective, where  
our findings from \cref{sec:tracking} allow us to verify the regularity assumptions on the adjoint state $p$ 
from \ref{thm:uabscont}:

\begin{proposition}
    Suppose  that  $\Omega$ is convex and  $\omega_0 \subset \omega_1 = \overline\Omega$
    and that $u_0$ has an essentially bounded density w.r.t.\ the Lebesgue measure, see \cref{assu:u0abscont}.
    Further, consider the transport costs $c(x, \xi) = \frac12 \norm{x - \xi}^2$ and
    that the tracking type objective $J$ from \eqref{eq:trackingOmega} so that 
    the optimal control problem reads
    \begin{equation}\tag{P$_{2, \Omega}$}
        \left\{\quad
        \begin{aligned}
            \min \quad & \frac{1}{2}\,\| y - y_d\|_{L^2(\Omega)}^2 + \frac{\alpha}{2}\, W_2(u^0, u)^2\\
            \textup{w.r.t.} \quad & y \in W_0^{1,q}(\Omega), \quad u \in \frakM(\overline{\Omega}),\\
            \textup{s.t.} \quad & - \laplace y = u \mres \Omega \text{ in } W^{-1,q}(\Omega)
        \end{aligned}
        \right.
    \end{equation}
    with a desired state $y_d \in C(\overline{\Omega})$.
    Then every optimal control $\bar u$ is absolutely continuous w.r.t.\ the Lebesgue measure in $\Omega$ and 
    the $(d-1)$-dimensional Hausdorff measure on $\Gamma := \partial\Omega$.
    Moreover, the density function of $\bar u \mres \Gamma$  satisfies \eqref{eq:densities} and the density function 
    of $\bar u \mres \Omega$ is the same for all optimal controls. Furthermore, 
    provided that \cmchange{$y_d \in C^{0,\gamma}(\Omega)$ with some $\gamma > 0$}, 
    the latter satisfies $\bar U _\Omega \in L^\infty_{\textup{loc}}(\Omega)$ and, 
    if, in addition, the domain is of class \cmchange{$C^{2,\gamma}$}, then even $\bar U_\Omega \in L^\infty(\Omega)$.
\end{proposition}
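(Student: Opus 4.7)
The plan is to combine the essential boundedness of the optimal state from \cref{thm:statebound} with \cref{thm:uabscont,thm:uabscont_bdry}, and then bootstrap to extract the stated higher regularity of the density.

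First, I would verify \cref{assu:contcosts}: $c(x,\xi)=\tfrac12\|\xi-x\|^2$ is in $C^\infty(\omega_0\times\omega_1)$, $y_d\in C(\overline\Omega)=C(\omega_1)$, and $\omega_1=\overline\Omega$. Hence \cref{thm:statebound} gives $\bar y\in L^\infty(\Omega)\cap H^1_0(\Omega)$, and the adjoint equation becomes $-\Delta p=\bar y-y_d$ with right-hand side in $L^\infty(\Omega)$. Interior $L^r$-regularity for the Poisson equation (Gilbarg--Trudinger, Theorem~9.11) then yields $p\in W^{2,r}_{\mathrm{loc}}(\Omega)$ for every $r\in(1,\infty)$. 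Since $\interior(\omega_1)=\Omega$ and the cost is of power-type with exponent $\gamma=2$, \cref{thm:uabscont} applies for any finite $r>d$ and gives $\bar u\mres\Omega\ll\lambda^d$ with density $\bar U_\Omega\in L^{r/d}_{\mathrm{loc}}(\Omega)$ for every finite $r$. For the boundary part, we write $c(x,\xi)=h(\|x-\xi\|)$ with $h(t)=t^2/2$ strictly increasing; the convexity of $\Omega$ and $\omega_0\subset\omega_1=\overline\Omega$ let us apply \cref{thm:uabscont_bdry} to obtain $\bar u\mres\Gamma\ll\HH^{d-1}$ with $\bar U_\Gamma\in L^\infty(\Gamma;\HH^{d-1})$.

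Uniqueness of $\bar U_\Omega$ across optimal controls follows from \cref{cor:convex}, since $J(y)=\tfrac12\|y-y_d\|_{L^2(\Omega)}^2$ is strictly convex and $\Omega\cap\omega_1=\Omega$. For the improved local regularity when $y_d\in C^{0,\gamma}(\Omega)$, I would bootstrap: having $\bar U_\Omega\in L^s_{\mathrm{loc}}(\Omega)$ for every finite $s$, the state equation $-\Delta\bar y=\bar U_\Omega$ (the boundary mass of $\bar u$ is annihilated by $E^*$) together with interior $W^{2,s}$-regularity and Sobolev embedding forces $\bar y\in C^{0,\alpha}_{\mathrm{loc}}(\Omega)$ for every $\alpha\in(0,1)$. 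Hence $-\Delta p=\bar y-y_d\in C^{0,\gamma}_{\mathrm{loc}}(\Omega)$; interior Schauder estimates then give $p\in C^{2,\gamma}_{\mathrm{loc}}(\Omega)\subset W^{2,\infty}_{\mathrm{loc}}(\Omega)$, and a second application of \cref{thm:uabscont} with $r=\infty$ yields $\bar U_\Omega\in L^\infty_{\mathrm{loc}}(\Omega)$.

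For the global $L^\infty$-estimate under the additional assumption $\Omega\in C^{2,\gamma}$, the same bootstrap is carried out with global $W^{2,s}$- and Schauder estimates: first $\bar y\in W^{2,s}(\Omega)\subset C^{0,\alpha}(\overline\Omega)$, then $p\in C^{2,\gamma}(\overline\Omega)\subset W^{2,\infty}(\Omega)$, and since convex open sets satisfy the cone condition, the global part of \cref{thm:uabscont} delivers $\bar U_\Omega\in L^\infty(\Omega)$. The main obstacle I expect is verifying that the elliptic estimates really apply at the required level of regularity of $\partial\Omega$: the interior $W^{2,s}$- and Schauder estimates are routine on any open set, but the last step genuinely needs the $C^{2,\gamma}$-assumption on the boundary to pass from interior to global regularity of both $\bar y$ and $p$.
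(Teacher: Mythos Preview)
Your proposal is correct and follows essentially the same route as the paper: verify \cref{assu:contcosts} and apply \cref{thm:statebound} to get $\bar y\in L^\infty(\Omega)$, use interior $W^{2,r}$-regularity for $p$ to invoke \cref{thm:uabscont}, use \cref{thm:uabscont_bdry} with $h(t)=t^2/2$ for the boundary part, deduce uniqueness of $\bar U_\Omega$ from \cref{cor:convex}, and then bootstrap through the state and adjoint equations (first locally via interior $W^{2,s}$- and Schauder-type estimates, then globally under the $C^{2,\gamma}$-assumption on $\partial\Omega$) to reach $p\in W^{2,\infty}$ and hence $\bar U_\Omega\in L^\infty$. The only cosmetic differences are in the elliptic references you cite (Gilbarg--Trudinger~9.11 and interior Schauder, versus the paper's Troianiello~3.8 and Gilbarg--Trudinger~4.3/6.14/9.15), which amount to the same estimates.
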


\begin{proof}
    Let us first consider $\bar u \mres \Gamma$. From \cref{thm:fon}, we know that the adjoint state satisfies $p \in C_0(\Omega)$ such that, 
    by setting $h(r) := \frac{1}{2} r^2$, all assumptions of \cref{thm:uabscont_bdry} are fulfilled implying in turn the assertions on $\bar u \mres \Gamma$. 
    
    Concerning $\bar u \mres \Omega$, we apply \cref{thm:uabscont}, for which we have to verify the regularity assumptions 
    on the adjoint state as solution of  
    \begin{equation}\label{eq:adjpdeex}
        - \laplace p = \bar y - y_d \quad \text{in } W^{-1,q'}(\Omega).
    \end{equation}
    Since the transportation costs are quadratic and thus smooth and the desired state is assumed to be continuous, 
    the assumptions of \cref{thm:statebound} are fulfilled and thus the associated state $\bar y$ is essentially bounded in $\Omega$.   
    Therefore, by applying interior regularity results from \cite[Theorem~3.8]{Troianiello1987}, we find
    $p \in W^{2,r}_{\textup{loc}}(\Omega)$ for every $r < \infty$ such that \cref{thm:uabscont} gives that $\bar u \mres \Omega$ is indeed 
    absolutely continuous w.r.t.\ the Lebesgue measure. Moreover, by \cref{cor:convex}, the optimal control restricted to $\Omega$ is unique, 
    which gives the uniqueness of the density $\bar U_\Omega$. 
    
    \cmchange{In order to show the essential boundedness of $\bar U_\Omega$ in the interior, 
    let $\Omega' \subset \R^d$ be an arbitrary open set with $\overline{\Omega'} \subset \Omega$.
    Since $p \in W^{2,r}_{\textup{loc}}(\Omega)$ with $r \in (d,\infty)$ arbitrary, 
    we already know that $\bar U_\Omega \in L^{r/d}_{\textup{loc}}(\Omega)$. 
    Therefore, by applying \cite[Theorem~3.8]{Troianiello1987} to the state equation, we obtain $\bar y \in W^{2,r/d}_{\textup{loc}}(\Omega)$. 
    Hence $\bar y \in C^{0,\gamma}(\Omega'')$, where $\Omega''$ is an open set with smooth boundary and
    $\overline{\Omega'} \subset \Omega'' \subset \overline{\Omega''} \subset \Omega$. 
    Since $y_d \in C^{0,\gamma}(\Omega'')$ by assumption, too, and $p$ is continuous, \cite[Theorem~4.3]{GT01} implies that 
    \begin{equation*}
        - \laplace v = \bar y - y_d \text{ in } \Omega'', \quad v = p \text{ on } \partial\Omega''
    \end{equation*}        
    admits a unique classical solution $v \in C^2(\Omega'')$. Thus we obtain $p \in C^{2}(\overline{\Omega'})$ and, 
    since $\Omega'$ was arbitrary, this implies $p \in W^{2,\infty}_{\textup{loc}}(\Omega)$.
    Hence, $\bar U_\Omega \in L^\infty_{\textup{loc}}(\Omega)$ thanks to \cref{thm:uabscont}.}    
    
    
    For the regularity up to the boundary, we first note that, thanks to the assumed regularity of the domain, 
    \cite[Theorem~9.15]{GT01} implies that the adjoint state $p$ as solution of \eqref{eq:adjpdeex}
    satisfies $p\in W^{2,r}(\Omega)$ for every $r < \infty$. Therefore, \cref{thm:uabscont} yields 
    $\bar U_\Omega \in L^{r/d}(\Omega)$ and thus, again by \cite[Theorem~9.15]{GT01}, 
    $\bar y \in W^{2,r/d}(\Omega) \embed C^{0,\gamma}(\Omega)$.
    \cmchange{Since also $y_d \in C^{0,\gamma}(\Omega)$ 
    and $\Omega$ is of class $C^{2,\gamma}$, 
    \cite[Theorem~6.14]{GT01} implies that $p \in C^{2,\gamma}(\Omega) \embed W^{2,\infty}(\Omega)$.}
    Given this regularity of the adjoint state, \cref{thm:uabscont} directly implies $\bar U_\Omega \in L^\infty(\Omega)$, 
    provided that the domain is of class \cmchange{$C^{2,\gamma}$}, as claimed.
\end{proof}

\begin{remark}
  \label{rem:numerics}
  We mention that the regularity results
  of this section are confirmed
  by the numerical results in \cite{BorchardWachsmuth2025:1}.
  In particular,
  in case $\omega_1 = \overline\Omega$
  the computed approximations of the optimal controls
  seem to be absolutely continuous in the interior with bounded density,
  see \cref{thm:uabscont}.
\end{remark}


\section{Metric transport costs}\label{sec:metric}

As in the previous section,
we assume that the dimension of $\omega_0$ is the same as the one of $\omega_1$, i.e., $\omega_0, \omega_1 \subset \R^d$ with 
$d \coloneqq d_1 = d_0 \ge 2$. This section is devoted to the case, where the transportation costs are given by the 
euclidean distance of two points, i.e., $c(x,\xi) \coloneqq \|x-\xi\|$, such that $\wdist^c_{u^0}(u) = W_1(u^0, u)$ is just the Wasserstein distance of order 1 between $u^0$ and $u$.

\begin{theorem}\label{thm:transray}
    Let $c(x,\xi) \coloneqq \|x-\xi\|$ and let $\bar u$ be an arbitrary local minimizer of \eqref{eq:optctrl}. 
    Moreover, assume that the adjoint state $p$ associated with $\bar u$ is differentiable at every point in $\supp(\bar u) \cap \interior(\omega_1)$. 
    Then, for every $\xi \in \supp(\bar u) \cap \interior(\omega_1)$, there exists an $r \geq 0$ such that 
    \begin{equation}\label{eq:transrayxi}
        \{ x \in \omega_0 : (x, \xi) \in \supp(\bar\pi)\} \subset [\xi, \xi + r \, \nabla p(\xi)],
    \end{equation}
    and the following complementarity system is fulfilled
    \begin{equation}\label{eq:complnablap}
        r \geq 0, \quad r \big(\|\nabla p(\xi)\| - \alpha\big) = 0, \quad \|\nabla p(\xi)\| \leq \alpha .
    \end{equation}
    Thus, if $u^0$ is absolutely continuous w.r.t.\ the Lebesgue measure, then $\bar u$ is non-atomic in the interior of $\omega_1$.

    If, in addition, $\omega_0 \subset \omega_1 = \overline{\Omega}$ and $\Omega$ is convex, then $\bar u \mres \Gamma \ll \HH^{d-1}$ and 
    its density function satisfies $\frac{\d \bar u {\footnotesize \mres} \Gamma}{\d \HH^{d-1}} \in L^\infty(\Gamma;\HH^{d-1})$, where $\Gamma$ again denotes the 
    boundary of $\Omega$.   As a consequence, under these additional assumptions, $\bar u$ is non-atomic on the whole 
    $\omega_1= \overline{\Omega}$, provided that $u^0 \ll \lambda^d$.
\end{theorem}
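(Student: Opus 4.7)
The starting point would be \cref{lem:support_plan}. For any $\xi \in \supp(\bar u) \cap \interior(\omega_1)$ the lemma guarantees the existence of at least one $x \in \supp(u^0)$ with $(x,\xi) \in \supp(\bar\pi)$, and, more importantly, \emph{every} such pair satisfies that $\xi$ minimizes $\eta \mapsto \|x-\eta\| + \tfrac1\alpha p(\eta)$ on $\omega_1$. Since $\xi$ lies in the interior and $p$ is differentiable at $\xi$, I would read off first-order optimality by splitting according to whether $x = \xi$ or $x \neq \xi$.

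For $x \neq \xi$, the map $\eta \mapsto \|x-\eta\|$ is smooth at $\xi$ with gradient $(\xi - x)/\|\xi - x\|$, and the Euler equation reads
\begin{equation*}
    \frac{\xi - x}{\|\xi - x\|} = -\frac{1}{\alpha} \nabla p(\xi),
\end{equation*}
which forces $\|\nabla p(\xi)\| = \alpha$ and, after rearrangement, $x = \xi + \tfrac{\|x-\xi\|}{\alpha} \nabla p(\xi)$. For $x = \xi$, the convex subdifferential of $\|\cdot - x\|$ at $\xi$ is the closed unit ball, so the optimality inclusion $0 \in \overline{B_1(0)} + \tfrac1\alpha \nabla p(\xi)$ gives $\|\nabla p(\xi)\| \leq \alpha$. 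Combining both cases, $\|\nabla p(\xi)\| \leq \alpha$ always holds (since by \cref{lem:support_plan} at least one admissible $x$ exists). Setting $r := \alpha^{-1} \sup\{\|x-\xi\| : (x,\xi) \in \supp(\bar\pi)\}$, the two-case analysis shows that every such $x$ lies on the segment $[\xi,\xi + r\nabla p(\xi)]$ — trivially if $x = \xi$, and by the Euler identity otherwise — giving \eqref{eq:transrayxi}. The complementarity \eqref{eq:complnablap} then follows: if $r > 0$, some $x \neq \xi$ appears in the supremum, forcing $\|\nabla p(\xi)\| = \alpha$.

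For the interior non-atomicity I would argue by contradiction. If $\xi \in \interior(\omega_1)$ were an atom of $\bar u$, the marginal property together with the fact that $\bar\pi$ is concentrated on $\supp(\bar\pi)$ (cf.\ \eqref{eq:prop_support}) gives
\begin{equation*}
    0 < \bar u(\{\xi\}) = \bar\pi(\omega_0 \times \{\xi\}) = \bar\pi\big((\omega_0 \times \{\xi\}) \cap \supp(\bar\pi)\big).
\end{equation*}
The segment inclusion \eqref{eq:transrayxi} bounds the right-hand side by $\bar\pi([\xi, \xi + r\nabla p(\xi)] \times \omega_1) = u^0([\xi,\xi + r\nabla p(\xi)])$. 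Since $d \geq 2$, this segment is $\lambda^d$-null, and $u^0 \ll \lambda^d$ forces its $u^0$-measure to vanish, which is the desired contradiction.

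The boundary statement falls directly under \cref{thm:uabscont_bdry} with the strictly increasing profile $h(r) = r$ (and the essentially bounded density of $u^0$ from \cref{assu:u0abscont}), yielding $\bar u \mres \Gamma \ll \HH^{d-1}$ with bounded density. Since $\HH^{d-1}$ has no atoms on $\Gamma$ for $d \geq 2$, $\bar u \mres \Gamma$ has no atoms either, so combined with the interior result, $\bar u$ is non-atomic on all of $\overline\Omega$. The only delicate point of the plan is the two-case first-order analysis — in particular the subdifferential treatment at $x = \xi$ — while the remainder is a straightforward deployment of the optimality system via \cref{lem:support_plan} and of the boundary regularity theorem already established for metric-type costs.
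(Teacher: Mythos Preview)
Your argument is correct and essentially identical to the paper's: both derive the transport-ray inclusion from the first-order condition $\nabla\psi(\xi)\in\partial\|\cdot-x\|(\xi)$ via \cref{lem:support_plan}, deduce the complementarity from the structure of this subdifferential, bound $\bar u(\{\xi\})$ by $u^0$ of a line segment, and invoke \cref{thm:uabscont_bdry} with $h=\mathrm{id}$ for the boundary part. The only cosmetic difference is that you split into the cases $x=\xi$ and $x\neq\xi$ explicitly, whereas the paper handles both at once through the subdifferential of the norm.
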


\begin{remark}
    The set on the right hand side of \eqref{eq:transrayxi} is also known as  \emph{transport rays}, cf.\ \cite[Section~3.1.3]{santambrogio}. 
\end{remark}

\begin{proof}[Proof of \cref{thm:transray}]
  To prove \eqref{eq:transrayxi}, let $\xi \in \supp(\bar u) \cap \interior(\omega_1)$ 
  be arbitrary and employ again the characterization from \cref{lem:support_plan} of the support of $\bar\pi$, 
   which, in case of metric costs, reads
    \begin{equation}
        \label{eq:wo_ist_support_metrisch}
        \supp(\bar\pi) \subset \big\{ (x,\xi) \in \omega_0 \times \omega_1 \colon 
        \|\xi - x\| - \psi(\xi) = \inf_{\eta \in \omega_1} \|\eta - x\| - \psi(\eta) \big\}.
    \end{equation}
    Since $\xi$ belongs to the interior of $\omega_1$, the assumed differentiability of $\psi = - p/\alpha$ and the convexity of the norm imply
    \begin{equation*}
        (x, \xi) \in \supp(\bar\pi) 
        \quad  \Longrightarrow \quad 
        \nabla \psi(\xi) \in \partial  \|\cdot - x\|(\xi).
    \end{equation*}
    Therefore,
    for every $(x, \xi) \in \supp(\bar\pi) \cap (\omega_0 \times \interior(\omega_1))$, there holds
    \begin{equation}\label{eq:transportray}
        x = \xi  - \|\xi - x\| \,\nabla \psi(\xi),
    \end{equation}
    which is \eqref{eq:transrayxi}. From $\nabla \psi(\xi) \in  \partial  \|\cdot - x\|(\xi)$, we moreover deduce that $\|\nabla \psi(\xi)\| \leq 1$ and 
    that $x \neq \xi$ implies $\|\nabla \psi(\xi)\| = 1$. In view of $\psi = - p/\alpha$, this gives \eqref{eq:complnablap}.
    
    Thanks to \eqref{eq:transrayxi}, $\{x \in \omega_0 : (x, \xi) \in \supp(\bar\pi)\}$ 
    is contained in a line segment $[\xi, \hat x]$ with some $\hat x \in \omega_0$. Thus, if $u^0 \ll \lambda^d$, then it follows
    \begin{equation*}
        \bar u(\{\xi\}) = \bar\pi(\omega_0 \times \{\xi\}) 
        \subset  \bar\pi([\xi, \hat x] \times \omega_1) = u^0([\xi, \hat x]) = 0
    \end{equation*}
    so that $\bar u$ is indeed non-atomic in $\interior(\omega_1)$, provided that $u^0 \ll \lambda^d$.
    
    The second assertion immediately follows from \cref{thm:uabscont_bdry} by setting $h$ to the identity in that theorem.
\end{proof}

Under an additional smoothness assumption on the adjoint state, we can prove that $\bar u$ is not only non-atomic but even 
absolutely continuous w.r.t.\ the $(d-1)$-dimensional Hausdorff measure.

\begin{theorem}\label{thm:metrichausdorff}
    Let again $c(x,\xi) \coloneqq \|x-\xi\|$ and let $\bar u$ be an arbitrary local minimizer of \eqref{eq:optctrl} with adjoint state $p$. 
    Suppose moreover that $p$ is continuously differentiable in $\interior(\omega_1)$ with Lipschitz continuous gradient with Lipschitz constant $L > 0$
    and the prior satisfies \cref{assu:u0abscont}, i.e., $u^0$ is absolutely continuous w.r.t.\ the Lebesgue measure with 
    essentially bounded density function $U^0 \in L^\infty(\omega_0)$. Then $\bar u  \mres \interior(\omega_1)$ 
    is absolutely continuous w.r.t.\ $\HH^{d-1}$.
\end{theorem}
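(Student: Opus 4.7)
The strategy is to lift every $\HH^{d-1}$-null Borel set $A \subset \interior(\omega_1)$ to a Lebesgue-null subset of $\omega_0$ by means of the transport-ray structure provided by \cref{thm:transray}, and then invoke the bounded density of $u^0$ together with the marginal property of the optimal plan.

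Fix an optimal plan $\bar\pi \in \KK_c(u^0,\bar u)$ and a Borel set $A \subset \interior(\omega_1)$ with $\HH^{d-1}(A) = 0$. By \eqref{eq:prop_support} we may assume $A \subset \supp(\bar u)$. Set $D := \sup\{\norm{x-\xi} \colon x\in\omega_0,\,\xi\in\omega_1\}$ and $M := D/\alpha$; the complementarity relation \eqref{eq:complnablap} gives $r(\xi)\,\norm{\nabla p(\xi)} = \norm{\xi - x}\leq D$ whenever $r(\xi) > 0$ (forcing $\norm{\nabla p(\xi)} = \alpha$), hence $r(\xi) \le M$ uniformly over $\xi \in A$. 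Define
\begin{equation*}
F \colon \interior(\omega_1) \times [0,M] \to \R^d, \qquad F(\xi,t) := \xi + t\,\nabla p(\xi),
\end{equation*}
which is Lipschitz because $\nabla p$ is bounded by $\alpha$ and Lipschitz with constant $L$ on $\interior(\omega_1)$. \cref{thm:transray} then yields
\begin{equation*}
\{x\in \omega_0 \colon \exists \xi \in A, \;(x,\xi)\in\supp(\bar\pi)\} \subset F(A \times [0,M]).
\end{equation*}

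The standard product inequality for Hausdorff measures gives $\HH^{d}(A \times [0,M]) = 0$ in $\R^{d+1}$, and since $F$ is Lipschitz from $\R^{d+1}$ into $\R^d$, the image $F(A \times [0,M])$ has vanishing $d$-dimensional Hausdorff measure, hence vanishing outer Lebesgue measure. Enclosing it in a Borel $\lambda^d$-null set $\widetilde B$ and using $u^0 = U^0 \lambda^d$ with $U^0 \in L^\infty(\omega_0)$ produces $u^0(\widetilde B) = 0$. The desired conclusion follows from the chain
\begin{equation*}
\bar u(A) = \bar\pi(\omega_0 \times A) = \bar\pi\bigl((\omega_0 \times A) \cap \supp(\bar\pi)\bigr) \le \bar\pi(\widetilde B \times \omega_1) = u^0(\widetilde B) = 0,
\end{equation*}
where the second equality uses \eqref{eq:prop_support} for $\bar\pi$ and the inequality uses the inclusion above.

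The only technical nuisance is that $F(A\times[0,M])$ is merely analytic rather than Borel, but this is harmless since we pass to a Borel majorant before evaluating $u^0$. The essential point is the Lipschitz regularity of the "transport-ray map" $F$, which is exactly what the Lipschitz hypothesis on $\nabla p$ delivers; no second-order information on $p$ is required because we never compute a Jacobian determinant of $F$, only its Lipschitz constant. This is in contrast to \cref{thm:uabscont}, where absolute continuity with respect to $\lambda^d$ required $W^{2,r}$-regularity of the adjoint.
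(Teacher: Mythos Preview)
Your proof is correct and follows the same essential route as the paper: define the transport-ray map $F(\xi,t)=\xi+t\nabla p(\xi)$, use \cref{thm:transray} to get the inclusion of the relevant slice of $\supp(\bar\pi)$ into $F(A\times[0,M])\times\omega_1$, and control the $\lambda^d$-measure of the image via the product estimate $\HH^d(A\times[0,M])\le C\,\HH^{d-1}(A)$ together with the Lipschitz property of $F$.

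The only substantive difference is the level of quantitative information extracted. You argue directly that $\HH^{d-1}(A)=0$ implies $\HH^d(F(A\times[0,M]))=0$ by the elementary fact that Lipschitz maps do not increase Hausdorff measure, which suffices for absolute continuity and sidesteps any measurability issue by passing to a Borel hull. The paper instead invokes the coarea inequality of Esmayli--Haj\l{}asz to obtain the uniform bound $\bar u(A)\le C\,\HH^{d-1}(A)$ for \emph{all} Borel $A$ with $\HH^{d-1}(A)<\infty$; this is strictly stronger (even though, as the paper notes, one cannot apply Radon--Nikod\'ym since $\HH^{d-1}\mres\interior(\omega_1)$ is not $\sigma$-finite). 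Your version is more elementary and perfectly adequate for the stated theorem; the paper's version buys the quantitative constant at the price of the heavier coarea machinery. One small remark: the bound $\norm{\nabla p}\le\alpha$ you quote holds only on $\supp(\bar u)\cap\interior(\omega_1)$, but since you already reduced to $A\subset\supp(\bar u)$ this is harmless, and in any case $\nabla p$ is globally bounded on the bounded set $\interior(\omega_1)$ by its assumed Lipschitz continuity.
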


\begin{proof} 
First,
$p$ is differentiable at every point in $\supp(\bar u) \cap \interior(\omega_1)$ and \cref{thm:transray} is applicable.
Let us set $T := \alpha^{-1} \sup\set{ \abs{x - \xi} \given x \in \omega_0, \xi \in \omega_1}$
and introduce the function $F \colon \omega_1 \times [0,T] \to \R^d$ by
\begin{equation*}
	F(\xi, t) := \xi + t \nabla p(\xi).
\end{equation*}
Then, \eqref{eq:transrayxi} and \eqref{eq:complnablap}
imply for any Borel set $A \subset \interior(\omega_1)$ with $\HH^{d-1}(A) < \infty$
the inclusion
\begin{align*}
	[\omega_0 \times A] \cap \supp(\bar\pi)
	&\subset
	\set{ (x,\xi) \in \omega_0 \times A \given \exists t \in [0,T] : x = \xi + t \nabla p(\xi) = F(\xi, t)}
	\\
	&\subset
	[ \omega_0 \cap F(A \times [0,T])] \times A
	.
\end{align*}
Consequently,
\begin{equation}\label{eq:coareaineq0}
\begin{aligned}
	\bar u(A)
	&=
	\bar\pi( \omega_0 \times A )
	=
	\bar\pi( (\omega_0 \times A) \cap \supp(\bar\pi) )
	\\
	&\le
	\bar\pi(
		(\omega_0 \cap F(A \times [0,T])) \times \omega_1
	)
	=
	u^0(\omega_0 \cap F(A \times [0,T]))
	\\
	&=
	\int_{\omega_0 \cap F(A \times [0,T])} U^0 \d\lambda^d
	\le
	\norm{U^0}_{L^\infty(\omega_0)} 
	\int_{\omega_0 \cap F(A \times [0,T])} 1 \d\lambda^d
	\\
	&\le
	\norm{U^0}_{L^\infty(\omega_0)} 
	\int_{\omega_0 \cap F(A \times [0,T])} \HH^0( F^{-1}(\set{x}) \cap (A \times [0,T]) ) \d\lambda^d(x)
	.
\end{aligned}
\end{equation}
In the last inequality,
we used that
$x \in \omega_0 \cap F(A \times [0,T])$
implies
$F^{-1}(\set{x}) \cap (A \times [0,T]) \ne \emptyset$,
i.e.,
$\HH^0(F^{-1}(\set{x}) \cap (A \times [0,T]) ) \ge 1$.
We note that, at this point, the measurability of the integrand is not clear, but it will follow from the arguments below.
Next,
we apply the
Nöbeling--Szpilrajn--Eilenberg--Federer--Davies inequality
(or simply coarea inequality)
from \cite[Theorem~1.1]{EsmayliHajlasz2021}
in the setting
\begin{equation*}
	X = \omega_1 \times [0,T]
	,\quad
	Y = \R^d
	,\quad
	t = s = d
	,\quad
	E = A \times [0,T]
	.
\end{equation*}
Note that, due to our assumptions on $p$, the mapping $F$ is Lipschitz continuous with Lipschitz constant 
$L_F \leq 1 + T \, L + \|\nabla p\|_{C(\omega_1)}$
so that the assumptions of \cite[Theorem~1.1]{EsmayliHajlasz2021} are met.
This implies
\begin{equation}\label{eq:coareaineq1}
	\int_{\R^d} \HH^0( F^{-1}(\set{x}) \cap (A \times [0,T]) ) \d\HH^d(x)
	\le
	L_F^d\, \HH^d( A \times [0,T] )
	.
\end{equation}
To estimate the Hausdorff measure on the right hand side, let $\delta > 0$ be arbitrary and consider
\begin{equation*}
\begin{aligned}
    \HH^d_\delta( A \times [0,T] )
    &= \inf \Big\{ \sum_{i\in \N} \frac{\omega_d}{2^d}\, \diam(B_i)^d \colon \diam(B_i) < \delta, \; A \times [0,T] \subset \bigcup_{i\in\N} B_i \Big\} .
\end{aligned}
\end{equation*}
Let $\bigcup_{i\in \N} A_i$ be an arbitrary countable covering of $A$ with $a_i := \diam(A_i) < \delta / \sqrt{2}$.
Set $m_i := \min\{ k\in \N : k a_i > T\}$. Then 
\begin{equation*}
    \bigcup_{i\in \N} \bigcup_{k = 0}^{m_i} \hat B_{ik} \quad \text{with} \quad \hat B_{ik}:= A_i \times [k a_i, (k+1) a_i]
\end{equation*}
is a countable covering of $A \times [0,T]$ with $\diam(\hat B_{ik}) \leq \sqrt{2} a_i < \delta$. Thus we obtain
\begin{equation*}
\begin{aligned}
    \HH^d_\delta( A \times [0,T] )
    & \leq \sum_{i\in \N} \sum_{k=0}^{m_i}  \frac{\omega_d}{2^d}\, \diam(\hat B_{ik})^d \\
    & \leq 2^{d/2-1} \,\frac{\omega_d}{\omega_{d-1}} (T+\delta) \sum_{i\in \N} \frac{\omega_{d-1}}{2^{d-1}}\, \diam(A_i)^{d-1} .
\end{aligned}
\end{equation*}
Since $\bigcup_{i\in \N} A_i$ was an arbitrary covering of $A$ of diameter less than $\delta/\sqrt{2}$, this shows 
$\HH^d_\delta( A \times [0,T] ) \leq 2^{d/2-1} \frac{\omega_d}{\omega_{d-1}} (T+\delta) \, \HH^{d-1}_{\delta/\sqrt{2}}(A)$ and, as $\delta > 0$ was arbitrary, this in turn gives 
\begin{equation}\label{eq:hausdorffprodukt0}
    \HH^d( A \times [0,T] ) \leq 2^{d/2-1} \,\frac{\omega_d}{\omega_{d-1}} \,T \, \HH^{d-1}(A) < \infty.
\end{equation}
Note that, due to \eqref{eq:hausdorffprodukt0}, the integrand on the left hand side of \eqref{eq:coareaineq1} is $\HH^d$-measurable 
so that the integral is well defined, cf.~\cite[Theorem~1.1]{EsmayliHajlasz2021}.
Further, we recall that $\lambda^d = \HH^d$ on $\R^d$.
Returning to \eqref{eq:coareaineq0} and \eqref{eq:coareaineq1}, the inequality in \eqref{eq:hausdorffprodukt0} implies 
\begin{equation}\label{eq:hausdorffprodukt}
	\bar u(A)
	\le
	2^{d/2-1} \,\frac{\omega_d}{\omega_{d-1}}  \,\norm{U^0}_{L^\infty(\omega_0)} \,
	L_F^d\, T \,
	\HH^{d-1}( A )
\end{equation}
for any Borel set $A \subset \interior(\omega_1)$ with $\HH^{d-1}(A) < \infty$.
This shows that $\bar u  \mres \interior(\omega_1)$ is indeed absolutely continuous w.r.t.\ $\HH^{d-1}$.
\end{proof}

Note that we cannot apply the Radon--Nikodým theorem
in the above situation, since the measure $\HH^{d-1} \mres \interior(\omega_1)$
is not $\sigma$-finite (unless $\interior(\omega_1) = \emptyset$).
To give a simple example that the Radon--Nikodým theorem can fail in this case,
we mention that,
similarly to \eqref{eq:hausdorffprodukt}, it holds $\lambda^d(A) \le \HH^{d-1}(A)$ for all Borel sets $A \subset \R^d$,
but, of course, $\lambda^d$ does not have a density w.r.t.\ $\HH^{d-1}$.

In contrast to the Wasserstein-2-case in \cref{thm:uabscont}, we observe that, in case of metric transport costs, we are only able to prove
that the optimal control is absolutely continuous w.r.t.\ $\HH^{d-1}$ and not w.r.t.\ $\lambda^d$, 
provided that the prior is absolutely continuous w.r.t.\ to the Lebesgue measure. The next example shows that 
this result is sharp in the sense that it is indeed possible to obtain an optimal control that is not absolutely continuous w.r.t.\ the Lebesgue measure, 
although the prior is so.

\begin{example}\label{ex:linemeasure}
    We set $\omega_1 = \omega_0 = \overline{\Omega} = \overline{B_1(0)}$ and $\alpha = 1$. For the objective, we choose the tracking type 
    objective from \eqref{eq:trackingOmega}.  Moreover, we set the prior and the optimal control to
    \begin{equation*}
        u^0 := \lambda^2 \mres (B_1(0)\setminus B_{1/2}(0)), \quad \bar u := \frac{3}{4}\, \HH^1 \mres \partial B_{1/2}(0)
    \end{equation*}
    such that
    $u^0(\overline{\Omega}) = \bar u(\overline{\Omega}) = \frac{3}{4}\pi$. The optimal state is defined as solution of
    \begin{equation*}
        \bar y \in H^1_0(\Omega), \quad 
        \int_\Omega \nabla \bar y \cdot \nabla v \,\d\lambda^2 = \frac{3}{4} \int_{\partial B_{1/2}(0)} v\, \d\HH^1
        \quad \forall\, v\in H^1_0(\Omega) .
    \end{equation*}
    For the adjoint state, we choose 
    \begin{equation*}
        p(\xi) := \|\xi\|^2 - 1
    \end{equation*}        
    so that $p = 0$ on $\partial \Omega$. Note that $p$ obviously satisfies the smoothness assumptions in \cref{thm:metrichausdorff}.
    In order to fulfill the adjoint equation \eqref{eq:adjpde}, we define the desired state $y_d$ by 
    \begin{equation*}
        y_d := \bar y + \laplace p \in H^1(\Omega).
    \end{equation*}
    Note that $\{ \xi \in \overline{\Omega} \colon \|\nabla p(\xi)\| = 1 \} = \partial B_{1/2}(0) = \supp(\bar u)$ in accordance with the 
    complementarity relation in \eqref{eq:complnablap}. In order to compute the $\overline{c}$-conjugate function of $\psi = - p$, we observe that 
    \begin{equation}\label{eq:transrayex}
        \argmin_{\eta \in \omega_1} \|\eta - x\| - \psi(\eta) 
        = \argmin_{\eta \in \overline{\Omega}} \|\eta - x\| + p(\eta) =
        \begin{cases}
            x, & \|x\| < \frac{1}{2} , \\
             \frac{x}{2 \|x\|}, & \|x\| \geq \frac{1}{2}
        \end{cases}          
    \end{equation}
    and consequently,
    \begin{equation*}
        \psi^{\overline{c}}(x) = 
        \begin{cases}
            p(x), & \|x\| < \frac{1}{2}, \\
            \|x\| - \frac{5}{4}, & \|x\| \geq \frac{1}{2} .
        \end{cases}
    \end{equation*}
    In light of \eqref{eq:transrayex}, we see that the transport rays in this example are given by $[\xi, 2 \xi]$ with $\xi \in \partial B_{1/2}(0)$.
    For $A, B \in \BB(\overline{\Omega})$, we define 
    \begin{equation*}
        \bar\pi(A \times B) := \int_0^{2\pi}  \int_{1/2}^1 \chi_B(\tfrac{1}{2} \cos\varphi, \tfrac{1}{2} \sin\varphi) 
        \chi_A(\varrho \cos\varphi, \varrho \sin\varphi) \varrho\,\d \varrho \,\d\varphi
    \end{equation*}
    so that $\bar\pi \geq 0$,
    \begin{equation*}
        \bar\pi(A \times \overline{\Omega}) 
        = \lambda^2\big[A \cap (B_1(0)\setminus B_{1/2}(0))\big] = u^0(A) ,
    \end{equation*}
    and
    \begin{equation*}
    \begin{aligned}
        \bar\pi(\overline{\Omega} \times B) &= \int_0^{2\pi}  \int_{1/2}^1 \chi_B(\tfrac{1}{2} \cos\varphi, \tfrac{1}{2} \sin\varphi) \varrho\,\d \varrho \,\d\varphi \\
        & = \frac{3}{4} \, \int_0^{2\pi}  \chi_B(\tfrac{1}{2} \cos\varphi, \tfrac{1}{2} \sin\varphi) \frac{1}{2}\, \d\varphi
        = \frac{3}{4} \,\HH^1(\partial B_{1/2}(0)\cap B) = \bar u(B) .
    \end{aligned}
    \end{equation*}
    Therefore, $\bar\pi$ is feasible for the Kantorovich problem associated with $u^0$ and $\bar u$.
    For its support we obtain
    \begin{align*}
        \supp(\bar\pi) 
        & = \big\{ (x, \xi) \in \overline{B_1(0)\setminus B_{1/2}(0)} \times \partial B_{1/2}(0) \colon \tfrac{x}{\|x\|} = \tfrac{\xi}{\|\xi\|} \big\} \\
        & = \big\{ (x, \xi) \in \overline{B_1(0)\setminus B_{1/2}(0)} \times \partial B_{1/2}(0) \colon \|x\| - \tfrac{1}{2} = \|x - \xi\| \big\} \\
        & = \big\{ (x, \xi) \in \supp(u^0) \times \supp(\bar u) \colon \psi^{\overline{c}}(x) + \psi(\xi) = \|x - \xi\| \big\} .
    \end{align*}
    Therefore, $\bar\pi$ and $\psi = - p$ satisfy the complementarity condition of the Kantorovich problem 
    such that $\bar\pi$ is the solution of the Kantorovich problem, while $\psi$ is the dual solution.
    All in all, we have seen that $\bar u$ and $\bar y$ satisfy the state equation, $p$ satisfies the adjoint equation, $\bar\pi$ is the solution of 
    the Kantorovich problem associated with $u^0$ and $\bar u$, and $\psi = - p$ solves the dual Kantorovich problem. 
    Therefore, according to \cref{thm:fon}, $\bar u$ is a solution of the optimal control problem, which is not absolutely continuous
    w.r.t.\ the Lebesgue measure, although $u^0$ is so.
    
    Note moreover that, according to \cref{thm:fon}, the adjoint state $p$ is a solution of the dual Kantorovich problem, but it is not $c$-concave.
    Owing to \cite[Proposition~3.1]{santambrogio}, a function is $c$-concave in case of metric costs, if and only if its Lipschitz constant 
    is less or equal $1$, which is obviously not fulfilled in this example, cf.\ also \cref{rem:pnotcconc} in this context.
\end{example}

The next result shows that no mass is transported at all in case of metric transportation costs, if the Tikhonov parameter is sufficiently large.
In a sense, this corresponds to the well-known sparsity results for $L^1(\Omega)$- or Radon-norm regularization terms, where 
the optimal control vanishes, if the Tikhonov parameter is chosen large enough, cf., e.g., \cite{stadler09, ClasonKunisch2011, CasasKunisch2014}.

\begin{proposition}\label{prop:sparsity}
    Suppose that $d_0 = d_1 = d$ and that $\omega_0 = \omega_1 =: \omega$ is convex and 
    let the transportation costs be given by  $c(x,\xi) = \|x-\xi\|$. Moreover, let $\bar u$ be a locally optimal control with associated state 
    $\bar y$ and adjoint state $p$ and assume that the adjoint state is Lipschitz continuous on $\omega$ with 
    Lipschitz constant satisfying 
    \begin{equation}\label{eq:lipp}
        \lip_\omega(p) < \alpha.
    \end{equation}       
    Then $\bar u = u^0$.
\end{proposition}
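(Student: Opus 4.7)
The plan is to use the subgap between $\lip_\omega(p)$ and $\alpha$ to show that, for any optimal transport plan $\bar\pi \in \KK_c(u^0, \bar u)$, the support of $\bar\pi$ must lie on the diagonal of $\omega \times \omega$; from this $\bar u = u^0$ follows immediately by looking at the marginals.

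First, I would show the following pointwise minimization fact: for every fixed $x \in \omega$ the map
\begin{equation*}
    \omega \ni \eta \mapsto \|\eta - x\| + \tfrac{1}{\alpha} p(\eta)
\end{equation*}
attains its minimum over $\omega$ uniquely at $\eta = x$. Indeed, for any $\eta \in \omega$,
\begin{equation*}
    \big(\|\eta - x\| + \tfrac{1}{\alpha} p(\eta)\big) - \tfrac{1}{\alpha}p(x)
    \;\ge\; \|\eta - x\| - \tfrac{\lip_\omega(p)}{\alpha}\,\|\eta - x\|
    \;=\; \Big(1 - \tfrac{\lip_\omega(p)}{\alpha}\Big)\,\|\eta - x\|,
\end{equation*}
which, by \eqref{eq:lipp}, is strictly positive whenever $\eta \neq x$.

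Second, since $\bar u$ is locally optimal with finite objective value, $|\bar u|(\omega) = |u^0|(\omega)$ and an optimal transport plan $\bar\pi \in \KK_c(u^0,\bar u)$ exists. By \cref{lem:support_plan}, more precisely \eqref{eq:optimality_support_2}, for every $(x,\xi) \in \supp(\bar\pi)$ the point $\xi \in \omega_1 = \omega$ is a minimizer of $\eta \mapsto \|\eta - x\| + \tfrac{1}{\alpha} p(\eta)$ over $\omega$. By the first step, this forces $\xi = x$, so
\begin{equation*}
    \supp(\bar\pi) \subset \Delta := \{(x,x) \colon x \in \omega\}.
\end{equation*}

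Third, since $\bar\pi$ is a (finite, hence inner regular) Borel measure whose support lies in the closed set $\Delta$, we have $\bar\pi(\Delta^c) = 0$ by \eqref{eq:prop_support}. For any Borel set $B \subset \omega$, the sets $(\omega \times B)\cap\Delta$ and $(B \times \omega)\cap\Delta$ both coincide with $\{(x,x) \colon x \in B\}$. Hence
\begin{equation*}
    \bar u(B) = \bar\pi(\omega \times B) = \bar\pi\big((\omega \times B) \cap \Delta\big) = \bar\pi\big((B \times \omega) \cap \Delta\big) = \bar\pi(B \times \omega) = u^0(B),
\end{equation*}
which proves $\bar u = u^0$.

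There is no real obstacle beyond noticing the contraction estimate in the first step; the Lipschitz condition on $p$ together with the fact that the metric cost has unit gradient along each line makes the identity plan the unique minimizer of the Kantorovich problem, and the optimality system from \cref{thm:fon}/\cref{lem:support_plan} converts this analytic fact into the structural conclusion $\bar u = u^0$. Note in particular that the convexity of $\omega$ is not required for the argument; only the values of $p$ at points of $\omega$ enter via the Lipschitz constant $\lip_\omega(p)$.
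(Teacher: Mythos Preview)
Your proof is correct and follows essentially the same route as the paper: both use \eqref{eq:optimality_support_2} together with the strict Lipschitz bound $\lip_\omega(p) < \alpha$ to force $\supp(\bar\pi)$ onto the diagonal, and then read off $\bar u = u^0$ from the marginals. The only cosmetic difference is that the paper first invokes the identity $\psi^{\overline{c}} = -\psi$ (valid since $\lip_\omega(\psi) < 1$, cf.\ \cite[Proposition~3.1]{santambrogio}) to rewrite the support condition as $\psi(\xi) - \psi(x) = \|x-\xi\|$, whereas you work directly with the minimization characterization; your observation that convexity of $\omega$ is not actually used is also correct.
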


\begin{proof}
    We again define $\psi := - \alpha^{-1} p$. Then, by assumption, the Lipschitz constant of $\psi$ is less than one such that 
    there holds $\psi^{\overline{c}} = - \psi$, cf.\ e.g.\ \cite[Proposition~3.1]{santambrogio}.
    We again employ \eqref{eq:optimality_support_2}, which, in this case, reads
    \begin{equation}\label{eq:supppimetr}
        \supp(\bar\pi)
       \subset \{(x,\xi) \in \omega\times \omega  \colon \psi(\xi) - \psi(x) = \|x-\xi\|\},
    \end{equation}
    where, again, $\bar\pi$ is a solution to the Kantorovich problem associated with $u^0$ and $\bar u$.
    However, since $\lip_\omega(\psi) < 1$ by assumption, there holds $\psi(\xi) - \psi(x) < \|x-\xi\|$ for all $x\neq \xi$ 
    and consequently $\supp(\bar\pi) \subset \{(x, x) \colon x\in \omega\}$.
    Hence, for every Borel set $A\subset \omega$, we obtain
    \begin{equation*}
        u^0(A) = \bar\pi(A\times \omega)
        = \bar\pi(A\times A) = \bar\pi(\omega\times A) = \bar u(A),
    \end{equation*}
    which proves the claim.
\end{proof}

Similarly to the discussion concerning \eqref{eq:kruemmungp} at the end of the \cref{sec:stritctconv}, 
the question arise, in which situations, the condition in \eqref{eq:lipp} is satisfied, which is investigated in the following.

\begin{corollary}\label{cor:u=u^0}
    Let again $d_0 = d_1 = d$ and $\omega_0 = \omega_1 =: \omega$ be convex and the transport costs be given by $c(x,\xi) = \|x-\xi\|$.
    Suppose in addition that $\omega \subset\Omega$.
    Furthermore, assume that $J$ is Fr\'echet differentiable from $W^{1,q}(\Omega)$ to $\R$. 
    Suppose moreover that there exists $r > d$ such that, for every $M > 0$,  there is a constant $C_M < \infty$ such that 
    \begin{equation}\label{eq:nablaJbound}
        \sup_{\|y\|_{W^{1,q}(\Omega)} \leq M} \norm{J'(y)}_{L^r(\Omega)} \leq C_M
        .
    \end{equation}
    Then the number
    \begin{equation}\label{eq:Cpest}
        C_p := \sup_{|u|(\omega) \leq |u^0|(\omega)} \|\nabla S^* J' (S(u)) \|_{L^\infty(\omega; \R^d)}
    \end{equation}
    is finite. If the Tikhonov parameter satisfies $\alpha > C_p$, then $\bar u = u^0$ is the unique optimal solution of \eqref{eq:optctrl}. 
\end{corollary}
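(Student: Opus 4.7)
The plan is to establish both assertions by interior elliptic regularity combined with \cref{prop:sparsity}. First, I would bound $C_p$: given any $u \in \frakM(\omega)$ with $|u|(\omega) \leq |u^0|(\omega)$, the linearity and complete continuity of $S$ from \cref{lem:pdeexist} (combined with the boundedness of $S$ on bounded subsets of $\frakM(\omega)$) yield a uniform bound $\norm{S(u)}_{W^{1,q}(\Omega)} \le M$ for some $M$ depending only on $|u^0|(\omega)$. Assumption \eqref{eq:nablaJbound} then gives $\norm{J'(S(u))}_{L^r(\Omega)} \le C_M$. Hence the adjoint state $p = S^* J'(S(u))$ satisfies $-\laplace p = J'(S(u))$ with right-hand side in $L^r(\Omega) \subset W^{-1,q'}(\Omega)$, and $p\in W_0^{1,q'}(\Omega)$.

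Second, I would apply interior elliptic regularity on a neighborhood of $\omega$. Since $\omega \subset \Omega$ is compact with $\dist(\omega, \partial\Omega) > 0$, I can choose an open set $\omega'$ with smooth boundary such that $\omega \subset \omega' \subset \overline{\omega'} \subset \Omega$. The interior $W^{2,r}$-estimate (e.g.\ \cite[Theorem~9.11]{GT01} or \cite[Theorem~3.8]{Troianiello1987}) applied to $-\laplace p = J'(S(u))$ then provides
\begin{equation*}
    \norm{p}_{W^{2,r}(\omega')} \le C\bigl(\norm{J'(S(u))}_{L^r(\Omega)} + \norm{p}_{L^r(\Omega)}\bigr) \le \tilde C_M
\end{equation*}
with $\tilde C_M$ depending on $M$ only, where we also used the continuous dependence of $p$ in $W_0^{1,q'}(\Omega)$ (and hence in $L^r(\Omega)$) on the data. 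Because $r > d$, the Sobolev embedding $W^{2,r}(\omega') \embed C^1(\overline{\omega'})$ yields $\norm{\nabla p}_{L^\infty(\omega;\R^d)} \le C'\tilde C_M$ uniformly in $u$, proving $C_p < \infty$.

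Third, I would combine these bounds with \cref{prop:sparsity}. Let $\bar u$ be any optimal control, which exists by \cref{thm:existoptctrl}. Since the regularizer is infinite whenever the marginals have different total mass, we necessarily have $|\bar u|(\omega) = |u^0|(\omega)$, so $\bar u$ is admissible for the supremum defining $C_p$. The associated adjoint $p$ is therefore continuously differentiable on $\omega'$ with $\norm{\nabla p}_{L^\infty(\omega;\R^d)} \le C_p$. Convexity of $\omega$ lets me apply the mean value theorem along segments $[x, y] \subset \omega$ to conclude
\begin{equation*}
    \lip_\omega(p) \le \norm{\nabla p}_{L^\infty(\omega;\R^d)} \le C_p < \alpha.
\end{equation*}
Hypothesis \eqref{eq:lipp} of \cref{prop:sparsity} is thus met, and \cref{prop:sparsity} yields $\bar u = u^0$. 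Since this argument applies to any optimal control, $u^0$ is the unique optimal solution.

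The only delicate step is the uniform interior $W^{2,r}$ bound on $p$ in the neighborhood $\omega'$, which is the ingredient that truly exploits the hypothesis $r > d$; everything else is a straightforward application of the tools already assembled. No new technical difficulty is expected beyond choosing $\omega'$ with smooth boundary so the standard interior estimates apply verbatim despite $\Omega$ being only Lipschitz.
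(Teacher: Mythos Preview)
Your proposal is correct and follows essentially the same route as the paper: bound $\|S(u)\|_{W^{1,q}_0(\Omega)}$ uniformly via \cref{lem:pdeexist}, apply \eqref{eq:nablaJbound}, invoke interior $W^{2,r}$-regularity (the paper also cites \cite[Theorem~9.11]{GT01}) together with the Sobolev embedding $W^{2,r}\hookrightarrow W^{1,\infty}$ for $r>d$, and finish with \cref{prop:sparsity}. The only cosmetic differences are that you introduce an auxiliary neighborhood $\omega'$ and keep the lower-order term $\|p\|_{L^r(\Omega)}$ in the interior estimate (which the paper suppresses), while the paper writes the estimate directly on $\omega$ and uses the convexity of $\omega$ for the embedding $W^{2,r}(\omega)\hookrightarrow W^{1,\infty}(\omega)$.
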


\begin{proof}
        From \cref{lem:pdeexist}, we get
        a constant $C > 0$ such that
        \begin{equation*}
            \norm{S(u)}_{W_0^{1,q}(\Omega)}
            \le
            C \norm{u}_{\frakM(\omega)}
            =
            C \abs{u}(\omega)
            \le
            C \abs{u^0}(\omega)
            =: M
        \end{equation*}
        for all $u \in \frakM(\omega)$ with $\abs{u}(\omega) \le \abs{u^0}(\omega)$.
        From \eqref{eq:nablaJbound}, we infer
    \begin{equation*}
        C_J := \sup_{|u|(\omega) \leq |u^0|(\omega)} \|J'(S(u))\|_{L^r(\Omega)} \le C_M.
    \end{equation*}
    According to \cite[Theorem 9.11]{GT01}, the following interior regularity estimate holds true for the solution of the adjoint equation 
    \eqref{eq:adjpde}
    \begin{equation}\label{eq:interiorreg}
        \| p \|_{W^{2,r}(\omega)} \leq C\, \|J' (y)\|_{L^r(\Omega)}
    \end{equation}
    with a constant $C > 0$. Here we used that $\dist(\omega, \partial\Omega) > 0$ by assumption. 
    Thus $S^*$, the solution operator of \eqref{eq:adjpde}, is a linear and bounded operator from 
    $L^r(\Omega)$ to $W^{2,r}(\omega) \embed W^{1,\infty}(\omega)$, where the latter embedding follows from 
    $r> d$ and the convexity of $\omega$. Therefore we obtain 
    \begin{equation*}
        C_p \leq \|\nabla S^*\|_{\LL(L^r(\Omega), L^\infty(\omega; \R^d))} \, C_J < \infty
    \end{equation*}
    as claimed.
    
    To show the second assertion, let $\alpha > C_p$ and let
    $\bar u$ be an arbitrary local minimizer with associated adjoint state 
    $p$ according to \cref{thm:fon}. 
    Then $\lip_\omega(p) = \| \nabla p \|_{L^\infty(\omega)} \leq C_p < \alpha$
    and \cref{prop:sparsity} implies $\bar u = u^0$.
\end{proof}

\begin{remark}\label{rem:tracking}
    The condition in \eqref{eq:nablaJbound} is rather restrictive, in particular if $d > 2$.
    To illustrate this issue, let us again consider the classical example of a tracking type objective, i.e., 
    \begin{equation*}
        J(y) := \tfrac{1}{2} \| y - y_d \|_{L^2(\Omega)}^2
    \end{equation*} 
    with a given desired state $y_d \in L^\infty(\Omega)$. 
    Due to the Sobolev embedding $W^{1,q}(\Omega) \embed L^s(\Omega)$ with $s = dq/(d-q)$, 
    we then obtain $J'(y) = y - y_d\in L^s(\Omega)$. Now, since $q < d/(d-1)$, there holds
    $s < d/(d-2)$. Thus, condition \eqref{eq:nablaJbound} is satisfied for $d=2$
    if $q$ is chosen large enough.
    In case of $d = 3$ however, we arrive at $s<3$ such that, because of $r > 3$, \eqref{eq:nablaJbound} 
    is not automatically fulfilled by the regularity guaranteed by the state equation. 
    There are however relevant examples, where this condition is met even if $d>2$. 
    If we consider for instance the tracking-type objective from \eqref{eq:tracking} with $\dist(\overline{D}, \omega) > 0$, then
    a classical localization argument shows that $y =  S(u) \in C(\overline{D})$, since the right hand side $u$ is contained in $\omega$.
    Consequently $J'(y) = \chi_D (y - y_d) \in L^\infty(\Omega)$, which implies \eqref{eq:nablaJbound} for this example.
\end{remark}

In case of metric transportation costs, one can slightly sharpen the result of \cref{thm:nodiractransport}, as the following result shows:

\begin{proposition}\label{thm:nodirac3d}
    Consider again metric transportation costs, i.e., $c(x,\xi) = \|x-\xi\|$ and let $d_0 = d_1 = 3$.
    Let $J$ be of tracking type, i.e., given by \eqref{eq:trackingOmega}, 
    with a desired state $y_d \in L^s(\Omega)$ with $s > 3$. 
    Then, a point $\xi  \in \interior(\omega_1)$ can only be an atom of an optimal control $\bar u$ if it is also an atom of the prior $u^0$ 
    and no mass is transported to $\xi$ from any other point of $\omega_0$.
    Moreover, in this case, we have $\bar u(\{\xi\}) \le 8 \pi \alpha$. 
\end{proposition}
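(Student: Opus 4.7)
The plan is to build directly on the proof of \cref{thm:nodiractransport}. Since the cost $c(x, \cdot) = \|\cdot - x\|$ is $C^\infty$ on $\interior(\omega_1) \setminus \{x\}$, the hypotheses of \cref{thm:nodiractransport} are met, and its conclusions immediately yield the first two claims: any atom $\xi \in \interior(\omega_1)$ of $\bar u$ must also be an atom of $u^0$, and the only $x \in \omega_0$ with $(x,\xi) \in \supp(\bar\pi)$ is $x = \xi$ itself. Combined with \cref{lem:support_plan}, this forces $\xi$ to minimize
\begin{equation*}
    f_\xi(\eta) := \|\eta - \xi\| + \frac{1}{\alpha}\, p(\eta)
\end{equation*}
over $\omega_1$.

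To establish the new bound $\bar u(\{\xi\}) \le 8\pi\alpha$, I would set $\beta := \bar u(\{\xi\}) > 0$ and reuse the decomposition from the proof of \cref{thm:nodiractransport}: writing $\bar u = \beta\, \delta_\xi + \bar u \mres (\omega_1 \setminus \{\xi\})$, the adjoint splits as $p = p_1 + p_2 + p_3$ where, in three dimensions,
\begin{equation*}
    p_1(\eta) = \beta\Big(-\tfrac{1}{8\pi}\|\eta - \xi\| + h(\eta)\Big),
\end{equation*}
with $h$ harmonic on $\Omega$, $p_2 \in C^{1,\gamma}(B_\rho(\xi))$ for some $\gamma > 0$ (this uses $y_d \in L^s(\Omega)$ with $s > 3$), and $p_3$ is superharmonic and non-negative. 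Substituting into $f_\xi$ collapses the non-smooth terms into a single linear-in-distance contribution:
\begin{equation*}
    f_\xi(\eta) = \Big(1 - \frac{\beta}{8\pi\alpha}\Big)\|\eta - \xi\| + \frac{1}{\alpha}\big(\beta h(\eta) + p_2(\eta) + p_3(\eta)\big).
\end{equation*}

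Next, setting $\varphi := \beta h + p_2 \in C^{1,\gamma}(B_\rho(\xi))$ and invoking the superharmonic mean-value estimate \eqref{eq:superharmest}, for every sufficiently small $r > 0$ one obtains $\eta_r \in \partial B_r(\xi)$ such that
\begin{equation*}
    \alpha\big(f_\xi(\eta_r) - f_\xi(\xi)\big) \le \Big(\alpha - \frac{\beta}{8\pi}\Big)\, r + \|\varphi\|_{C^{1,\gamma}(B_\rho(\xi))}\, r^{1+\gamma}.
\end{equation*}
If one had $\beta > 8\pi\alpha$, the linear term would dominate as $r \searrow 0$, giving $f_\xi(\eta_r) < f_\xi(\xi)$ and contradicting the minimality of $\xi$. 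Hence $\beta \le 8\pi\alpha$.

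The main obstacle is conceptual rather than computational: in \cref{thm:nodiractransport} the quantity $\alpha\, c(x, \cdot)$ is absorbed into the Hölder-regular function $\varphi$, a move that is not directly available here because $c(\xi, \cdot) = \|\cdot - \xi\|$ fails to be $C^1$ at $\xi$. The key observation is that the non-smooth part of $c(\xi, \cdot)$ and the non-smooth part of $p_1$ are both proportional to $\|\eta - \xi\|$ and merge into a single linear-in-$r$ term whose coefficient $\alpha - \beta/(8\pi)$ precisely encodes the threshold $8\pi\alpha$.
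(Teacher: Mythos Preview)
Your proposal is correct and follows essentially the same route as the paper's proof: invoke \cref{thm:nodiractransport} for the first two claims, use $(\xi,\xi)\in\supp(\bar\pi)$ to force $\xi$ to minimize $f_\xi$, decompose $p=p_1+p_2+p_3$ with $\varphi:=\beta h+p_2\in C^{1,\gamma}$, apply the superharmonic estimate \eqref{eq:superharmest} to $p_3+\varphi$, and read off the threshold $8\pi\alpha$ from the combined linear-in-$r$ coefficient. Your closing remark about merging the two non-smooth $\norm{\eta-\xi}$ contributions is exactly the point that distinguishes this case from the generic argument in \cref{thm:nodiractransport}.
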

\begin{proof}
    The first assertion has already been proven in \cref{thm:nodiractransport}. 
    To show the second assertion, let us return to the proof of \cref{thm:nodiractransport}, more precisely to \eqref{eq:ximin}.
    If $\bar u(\xi) = \beta > 0$ and $\xi \in \interior(\omega_1)$, then \cref{thm:nodiractransport} implies $(\xi,\xi) \in \supp(\bar \pi)$ and thus, there must necessarily hold
    \begin{equation}\label{eq:ximin2}
        \xi \in \argmin_{\eta \in \omega_1}
        \Big\{
            \underbrace{\norm{ \eta - \xi} + \frac{1}{\alpha}\, \big(p_1(\eta) + p_2(\eta) + p_3(\eta)\big)}_{\displaystyle{=: f_{\xi}(\eta)}} \Big\} 
    \end{equation}   
    with $p_1$, $p_2$, and $p_3$ as defined in \eqref{eq:defp123}. As seen in the proof of \cref{thm:nodiractransport}, 
    \begin{equation*}
        (p_1 + p_2 + p_3)(\eta) = - \frac{\beta}{8\pi\alpha}\, \|\eta - \xi\| + p_3(\eta) + \underbrace{ \beta\, h(\eta) + p_2(\eta) }_{\displaystyle{=: \varphi(\eta)}} ,
    \end{equation*}
    where $h$ is a harmonic correction of the boundary values, cf.\ \eqref{eq:p13d}, such that $\varphi \in C^{1, \gamma}(B_\rho(\xi))$, provided that $\rho > 0$ 
    is sufficiently small, see also \eqref{eq:p2est}. Moreover, $p_3$ is superharmonic such that, again, \eqref{eq:superharmest} holds, this time with $\kappa = \gamma$, 
    i.e., for every $r < \rho$, there exists $\eta_r \in \partial B_r(\xi)$ with 
    \begin{equation*}
        p_3(\eta_r) + \varphi(\eta_r) \leq p_3(\xi) + \varphi(\xi) +   \|\varphi\|_{C^{1,\gamma}(B_\rho(\xi))} \, r^{1+\gamma} .
    \end{equation*}
    Consequently, similarly to \eqref{eq:fxabstieg}, we obtain
   \begin{equation*}
    \begin{aligned}
        f_{\xi}(\eta_r) & = \|\eta_r - \xi\| - \frac{\beta}{8\pi\alpha} \, \|\eta_r - \xi\|  + \frac{1}{\alpha} \big( p_3(\eta_r) + \varphi(\eta_r)\big) \\
        & \leq f_{\xi}(\xi) +  \frac{1}{\alpha}\, \|\varphi\|_{C^{1,\gamma}(B_\rho(\xi))} \, r^{1+\gamma} - \Big(\frac{\beta}{8\pi\alpha} - 1\Big) r .
    \end{aligned}
    \end{equation*}
    Therefore, if $\bar u(\{\xi\}) = \beta > 8\pi\alpha$ were true,
    this implies $f_\xi(\eta_r) < f_\xi(\xi)$ for $r > 0$ small enough
    which contradicts \eqref{eq:ximin2}.
\end{proof}

\begin{remark}\label{rem:diraccond}
    The above method of proof can be generalized to more general transportation costs of the form $c(x, \xi) = h(\|x - \xi\|)$
    with a function $h:\R \to \R$ that satisfies $h(0) = 0$. 
    To be more precise, in three dimensions, if $\xi \in \interior(\omega_1)$ is an atom of $\bar u$ with mass $\beta >0$, 
    then, for all $r>0$ sufficiently small, there must necessarily hold that
    \begin{equation}\label{eq:nodiracineq}
        h(r) \geq \frac{\beta}{8 \pi \alpha}\,r - \frac{1}{\alpha} \, \|\varphi\|_{C^{1,\gamma}(B_\rho(\xi))}\, r^{1+\gamma} 
        ,
    \end{equation}
    where $\varphi$ is defined as in the above proof.
    If now $c(x, \xi) = \frac{1}{\kappa} \|x - \xi\|^\kappa$ with $\kappa\in (1, 2)$ so that $h(r) = \frac{1}{\kappa} r^\kappa$, 
    then this condition will always be violated, if $r> 0$ is sufficiently small. Thus, in case of power-type transport costs with exponent between $1$ 
    and $2$, any optimal control $\bar u$ cannot possess an atom in $\interior(\omega_1)$ 
    provided that $J$ is of tracking type with a desired state $y_d \in L^s(\Omega)$ with $s > 3$.
    Note that this case is not covered by \cref{cor:nodiracinterior}, since the transport costs are not twice continuously differentiable in this case.\\    
    In the two dimensional case, the situation changes due to the different structure of the fundamental solution, cf.~\eqref{eq:p12d}.  
    The condition analogous to \eqref{eq:nodiracineq} then reads as follows: 
    an optimal control $\bar u$ can only have an atom at $\xi\in \interior(\omega_1)$ with mass $\beta > 0$, if
    \begin{equation*}
        h(r) \geq - r^2\Big( \frac{\beta}{8 \pi \alpha} (\ln(r) + 1) - \frac{1}{\alpha} \, \|\varphi\|_{C^{1,1}(B_\rho(\xi))} \Big) 
    \end{equation*}
    for all $r> 0$ small enough. Since this is fulfilled by $h(r) = \frac{1}{\kappa} r^\kappa$ with $\kappa \in (1,2)$ for $r>0$ sufficiently small, 
    the above argument to exclude atoms in $\interior(\omega_1)$ in the case 
    $c(x, \xi) = \frac{1}{\kappa} \|x - \xi\|^\kappa$, $\kappa\in (1, 2)$, does not apply in two dimensions.
\end{remark}

We end this section with a trivial example that illustrates our above findings on metric costs.

\begin{example}
    Let $d=3$ and $\overline{\Omega} = \omega_0 = \omega_1 = \overline{B_1(0)}$. Furthermore, $J$ is given by the 
    tracking type objective from \eqref{eq:trackingOmega}. Define the adjoint state $p$ as 
    \begin{equation}\label{eq:adjex}
        p(\xi) := - \kappa \, \big( \|\xi\| - 1\big) \quad \text{with} \quad 0 < \kappa < \alpha
    \end{equation}
    so that $p$ satisfies the homogeneous Dirichlet boundary conditions and $- \laplace p = 8\pi\kappa\,\Phi_0$,
    where $\Phi_0(\xi) =  \frac{1}{4 \pi} \|\xi\|^{-1}$ is the fundamental solution. We set $y_d \equiv 0$ and $\bar y = 8\pi\kappa\,\Phi_0$ so that 
    \begin{equation*}
        \bar u = - \laplace \bar y = 8\pi\kappa\,\delta_0 ,
    \end{equation*}
    where $\delta_0$ denotes the Dirac measure at $\xi = 0$. Note that $\bar u(\{0\}) = 8\pi\kappa < 8\pi\alpha$, cf.\ \cref{thm:nodirac3d}.
    We moreover set the prior to $u^0 = 8\pi\kappa\,\delta_0$ such that $\supp(\bar \pi) = \{(0,0)\}$.
    Note that for
    \begin{equation*}
        \psi(\eta) = -\frac{1}{\alpha}\, p(\eta) = \frac{\kappa}{\alpha} \big( \|\eta\| - 1\big) 
    \end{equation*}
    we have $\lip_{\overline{\Omega}}(\psi) = \lip_{\overline{\Omega}}(p)/\alpha = \kappa/\alpha < 1$.
    Consequently,
    \cite[Proposition~3.1]{santambrogio}
    implies $\psi^{\overline{c}} = -\psi$.
    Hence, \cref{thm:fon} is applicable and this shows that $\bar u$
    is indeed a minimizer.
\end{example}

\begin{remark}
    \label{rem:some_random_remark}
    We observe that the metric transportation costs are crucial in the above example. If $c(x, \xi) = \frac{1}{\kappa} \|x - \xi\|^\kappa$ 
    with $\kappa > 1$, then the transport costs cannot compensate for the norm contribution of the adjoint state in \eqref{eq:adjex} and $\xi = 0$ 
    does not solve the minimization problem analogous to \eqref{eq:ximin2}.
    Therefore, the mass of $u^0$ located at $\xi = 0$ is transported away from that point 
    in case of smooth transportation costs. 
    This is in essence the reason, why no Dirac can appear in $\interior(\omega_1)$ 
    in case of smooth transportation costs in three dimensions, cf.\ \cref{cor:nodiracinterior} and \cref{rem:diraccond}. 
\end{remark}


\section*{Acknowledgement}
The authors are very grateful to Christian Clason (KFU Graz) and Paul Manns (TU Dortmund) for several helpful discussions 
in the early state of the manuscript.

\appendix

\section{Subdifferential of the generalized transportation distance}\label{sec:subdifftrans}

In this section, we derive the characterization of the subdifferential of $\wdist^c_{\mu}$ that is used in \cref{thm:fon}. 
We point out that the result is not new and can for instance be found in \cite[Proposition~7.17]{santambrogio}, 
but, for convenience of the reader, we present the proof in detail.
Throughout this section, let $c: \omega_0 \times \omega_1 \to \R$ be a continuous cost functional
on the compact sets $\omega_0 \subset \R^{d_0}$, $\omega_1 \subset \R^{d_1}$,
and $\mu \in \frakM(\omega_0)$ be a given marginal satisfying $\mu \geq 0$.
Let us define the functional $F$ by
\begin{equation}\label{eq:defF}
    F : C(\omega_1) \ni \psi \mapsto - \int_{\omega_0} \psi^{\overline{c}}(x) \,\d \mu(x) \in \R, 
\end{equation}
where we used that the $\overline{c}$-conjugate is bounded due to the compactness of $\omega_1$.

\begin{lemma}\label{lem:Fconj}
    The conjugate functional to $F$ is given by 
    \begin{equation}
        F^* : \frakM(\omega_1) \ni \nu \mapsto \wdist^c_{\mu}(\nu) \in \R\cup \{\infty\}.
    \end{equation}
\end{lemma}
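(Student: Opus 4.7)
The plan is to compute $F^*(\nu)$ directly from its definition and split into cases depending on whether $\nu$ is admissible for the Kantorovich problem. By definition,
\begin{equation*}
    F^*(\nu) = \sup_{\psi \in C(\omega_1)} \left[ \int_{\omega_1} \psi \, \d\nu + \int_{\omega_0} \psi^{\overline{c}} \, \d\mu \right],
\end{equation*}
so I must show that this supremum equals $\wdist^c_\mu(\nu)$, with the convention that it is $+\infty$ whenever $\nu \not\geq 0$ or $\nu(\omega_1) \neq \mu(\omega_0)$.

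In the admissible case $\nu \geq 0$ with $\nu(\omega_1) = \mu(\omega_0)$, the identity is precisely Kantorovich duality: the supremum above equals the primal value $\inf \int c \, \d\pi$ over transport plans with marginals $\mu$ and $\nu$, which is $\wdist^c_\mu(\nu)$. This may be quoted from \cite[Theorem~1.39 or Proposition~3.1]{santambrogio} or \cite[Theorem~5.10]{Vil09}, using the continuity of $c$ on the compact product $\omega_0 \times \omega_1$ to guarantee that the sup is attained and that restricting to $\overline{c}$-concave potentials (the form appearing in $F^*$) does not change the value.

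For the case $\nu \geq 0$ but $\nu(\omega_1) \neq \mu(\omega_0)$, I would exploit constant shifts. A direct computation gives $(\psi - t)^{\overline{c}} = \psi^{\overline{c}} + t$, so replacing $\psi$ by $\psi - t$ changes the pairing by $t\,(\mu(\omega_0) - \nu(\omega_1))$. Letting $t \to +\infty$ or $t \to -\infty$ according to the sign of $\mu(\omega_0) - \nu(\omega_1)$ shows $F^*(\nu) = +\infty$, which matches $\wdist^c_\mu(\nu) = +\infty$.

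The remaining case is $\nu$ signed with nontrivial negative part $\nu^- \neq 0$. Here I would use the Jordan decomposition together with inner/outer regularity and Urysohn's lemma to produce a function $\phi \in C(\omega_1)$, $0 \leq \phi \leq 1$, with $\int \phi \, \d\nu < 0$. Taking $\psi = -t\phi$ for $t > 0$ yields $\int \psi \, \d\nu = -t\int \phi \, \d\nu \to +\infty$, while
\begin{equation*}
    \psi^{\overline{c}}(x) = \inf_{\xi \in \omega_1}\bigl(c(x,\xi) + t\phi(\xi)\bigr) \geq \min_{\omega_0 \times \omega_1} c
\end{equation*}
is bounded below uniformly in $t$, so $\int \psi^{\overline{c}} \, \d\mu$ stays bounded below. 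Hence the pairing diverges and $F^*(\nu) = +\infty = \wdist^c_\mu(\nu)$. The only nonroutine step is invoking Kantorovich duality in the admissible case; the other two cases are essentially bookkeeping with continuity of $c$ and regularity of $\nu$.
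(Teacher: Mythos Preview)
Your proof is correct but follows a different route from the paper. The paper applies a single abstract Fenchel duality theorem (\cite[Theorem~2.187]{BonnansShapiro2000}) after checking a Slater-type interior-point condition on the constraint $\varphi \oplus \psi \le c$; this delivers strong duality for the Kantorovich problem uniformly in $\nu$, so the identity $\wdist^c_\mu(\nu) = F^*(\nu)$ drops out in one stroke, including the infeasible cases where both sides are $+\infty$. You instead split off the admissible case, quote the standard Kantorovich duality there, and then handle the two inadmissible cases by explicit test-function constructions (constant shifts when the masses disagree, a Urysohn bump against the negative part when $\nu$ is signed). Your approach is more elementary and makes the divergence mechanism in the inadmissible cases completely transparent, at the cost of a case distinction; the paper's approach is shorter and uniform but relies on a heavier duality framework and on the reader checking that the abstract theorem indeed covers the situation where the primal feasible set is empty.
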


\begin{proof}
    The assertion follows directly from the Kantorovich duality in compact sets. It is straightforward to see that 
    the pre-dual problem to the Kantorovich problem \eqref{eq:kant} is 
    \begin{equation*}
        \sup\set*{
            \int_{\omega_0} \varphi \,\d \mu  + \int_{\omega_1} \psi \,\d \nu \given
            \varphi \in C(\omega_0), \; \psi \in C(\omega_1), \;
            \varphi \oplus \psi \le c
        }, 
    \end{equation*}        
    cf.\ also \cite[Section~1.2]{santambrogio}.
    Here, $(\varphi \oplus \psi)(x, \xi) := \varphi(x) + \psi(\xi)$.
    Since the regularity condition 
    \begin{equation*}
        0 \in \interior\big\{ v_0 \oplus v_1 - c + v \colon v_0 \in C(\omega_0), v_1 \in C(\omega_1), v\in C(\omega_0 \times \omega_1), v \geq 0 \big\}
    \end{equation*}
    is trivially fulfilled
    (e.g., by choosing $v_0 \equiv - \|c\|_{C(\omega_0 \times \omega_1)} - 1$
    and $v_1 \equiv 0$), the duality result from 
    \cite[Theorem~2.187]{BonnansShapiro2000} is applicable, which gives
    \begin{equation}\label{eq:kantdual}
    \begin{aligned}
        \wdist^c_{\mu}(\nu)
        &= \sup\Big\{ 
        \begin{aligned}[t]
            \int_{\omega_0} \varphi \,\d \mu  &+ \int_{\omega_1} \psi \,\d \nu : 
            \varphi \in C(\omega_0), \; \psi \in C(\omega_1), \\[-1.5ex]
            & \qquad\qquad \varphi(x) + \psi(y) \leq c(x,y) \;\forall \, (x,y) \in \omega_0\times \omega_1\Big\}
        \end{aligned} \\
        &= \sup\Big\{ 
        \begin{aligned}[t]
            \int_{\omega_0} \varphi \,\d \mu  + \int_{\omega_1} \psi \,\d \nu : \;
            & \varphi \in C(\omega_0), \; \psi \in C(\omega_1), \\[-1.5ex]
            & \varphi(x) \leq \inf_{y\in \omega_1} c(x,y) - \psi(y) \;\forall \, x\in \omega_0\Big\}
        \end{aligned} \\
        &= \sup\Big\{ 
        \int_{\omega_0} \psi^{\overline{c}} \,\d \mu  + \int_{\omega_1} \psi \,\d \nu :  \psi \in C(\omega_1)\Big\}
        = F^*(\nu),
    \end{aligned}
    \end{equation}
    where we used the continuity of $\psi^{\overline{c}}$ and $\mu \geq 0$ for the second to last equality.
\end{proof}

\begin{lemma}
    The functional $F$ from \eqref{eq:defF} is convex and continuous on the whole $C(\omega_1)$.
\end{lemma}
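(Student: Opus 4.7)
The plan is to establish convexity and continuity separately, both relying on the fact that the $\overline{c}$-conjugate operation is \emph{concave} and \emph{$1$-Lipschitz} in $\psi$ with respect to the supremum norm.

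For convexity, I would fix $x \in \omega_0$ and observe that the map $\psi \mapsto \psi^{\overline{c}}(x) = \inf_{\xi \in \omega_1}\bigl(c(x,\xi) - \psi(\xi)\bigr)$ is the pointwise infimum of affine functions of $\psi$ (namely $\psi \mapsto c(x,\xi) - \psi(\xi)$ for each $\xi$), hence concave in $\psi$. Therefore $\psi \mapsto -\psi^{\overline{c}}(x)$ is convex for each $x$, and since $\mu \ge 0$, integrating preserves convexity, giving convexity of $F$.

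For continuity, I would show the stronger statement that $F$ is globally Lipschitz with constant $\mu(\omega_0)$. The key estimate is that for any $\psi_1, \psi_2 \in C(\omega_1)$ and any $x \in \omega_0$, we have $|\psi_1^{\overline{c}}(x) - \psi_2^{\overline{c}}(x)| \le \|\psi_1 - \psi_2\|_{C(\omega_1)}$. This follows from the elementary fact that for any $\xi$,
\begin{equation*}
    c(x,\xi) - \psi_1(\xi) \ge c(x,\xi) - \psi_2(\xi) - \|\psi_1 - \psi_2\|_{C(\omega_1)},
\end{equation*}
and taking the infimum over $\xi$ on both sides yields $\psi_1^{\overline{c}}(x) \ge \psi_2^{\overline{c}}(x) - \|\psi_1 - \psi_2\|_{C(\omega_1)}$; swapping the roles of $\psi_1$ and $\psi_2$ gives the symmetric bound. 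Integrating against $\mu$ then yields
\begin{equation*}
    |F(\psi_1) - F(\psi_2)| \le \mu(\omega_0)\,\|\psi_1 - \psi_2\|_{C(\omega_1)},
\end{equation*}
which is Lipschitz continuity, and hence continuity on all of $C(\omega_1)$.

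I do not anticipate any real obstacle here — both arguments are one-line manipulations of the definition of the $\overline{c}$-conjugate, using only that $\mu$ is a finite nonnegative Borel measure and that $c$ enters only as a constant shift in the two estimates. The only mild care needed is to verify (as already noted in the text preceding Lemma~\ref{lem:Fconj}) that $\psi^{\overline{c}}$ is continuous and hence $\mu$-integrable, so that $F(\psi)$ is well defined and finite, which is why the compactness of $\omega_0$ and $\omega_1$ together with the continuity of $c$ are used implicitly.
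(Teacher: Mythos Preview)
Your proposal is correct and follows essentially the same approach as the paper: concavity of $\psi \mapsto \psi^{\overline c}(x)$ (as an infimum of affine maps) together with $\mu \ge 0$ for convexity, and the $1$-Lipschitz estimate $\|\psi_1^{\overline c} - \psi_2^{\overline c}\|_{C(\omega_0)} \le \|\psi_1 - \psi_2\|_{C(\omega_1)}$ for continuity. You supply a bit more detail than the paper does, but the argument is the same.
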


\begin{proof}
    It is easily seen that the mapping $C(\omega_1) \ni \psi \mapsto \psi^{\overline c}(x)$ is concave 
    for every $x\in \omega_0$ such that the non-negativity of $\mu$ implies that $F$ is convex. 
    Moreover, it holds that 
    \begin{equation*}
        \|\psi_1^{\overline c} - \psi_2^{\overline c}\|_{C(\omega_0)} \leq \|\psi_1 - \psi_2\|_{C(\omega_1)}
        \quad \forall \, \psi_1, \psi_2 \in C(\omega_1),
    \end{equation*}
    which implies the continuity of $F$.
\end{proof}

Consequently, $F$ is proper, convex, and lower semicontinuous such that
for all $\psi \in C(\omega_1)$
and $\nu \in \frakM(\omega_1)$
the equivalences
\begin{equation*}
    \psi \in \partial F^*(\nu)
    \quad \Longleftrightarrow \quad
    \nu \in \partial F(\psi) 
    \quad \Longleftrightarrow \quad
    F(\psi) + F^*(\nu) = \int_{\omega_1} \psi\,\d\nu
\end{equation*}
are obtained by standard arguments of convex analysis.
Thanks to \cref{lem:Fconj}, this means
\begin{equation*}
    \psi \in \partial \wdist^c_{\mu}(\nu) 
    \quad \Longleftrightarrow \quad
    \wdist^c_{\mu}(\nu) = \int_{\omega_0} \psi^{\overline c} \,\d\mu + \int_{\omega_1} \psi\, \d\nu,
\end{equation*}
which, in view of the equivalent reformulation of the dual Kantorovich problem in \eqref{eq:kantdual} yields
the following

\begin{proposition}\label{prop:kantsubdiff}
    A function $\psi \in C(\omega_1)$ is an element of the convex subdifferential $\partial\wdist^c_{\mu}(\nu)$, 
    if and only if $(\psi^{\overline{c}}, \psi)$ solves the pre-dual Kantorovich problem with marginals $\mu$ and $\nu$.
\end{proposition}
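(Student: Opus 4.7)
The plan is to simply unwind the chain of convex-analytic equivalences already assembled in the paragraph immediately preceding the proposition. Since $F$ is proper, convex and continuous on $C(\omega_1)$, and its Fenchel conjugate is $F^* = \wdist^c_\mu$ by \cref{lem:Fconj}, the Fenchel--Moreau identity $F^{**} = F$ together with the standard equivalence between the subdifferential relation and equality in Young's inequality gives
\begin{equation*}
    \psi \in \partial \wdist^c_\mu(\nu)
    \quad\Longleftrightarrow\quad
    \wdist^c_\mu(\nu) = \int_{\omega_0} \psi^{\overline c}\,\d\mu + \int_{\omega_1}\psi\,\d\nu.
\end{equation*}
This is precisely the statement recorded just above the proposition; I would cite it as the starting point of the proof.

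It remains to identify the right-hand side with optimality in the pre-dual Kantorovich problem. By definition of the $\overline c$-conjugate, the pair $(\psi^{\overline c}, \psi)$ always satisfies $\psi^{\overline c}(x) + \psi(\xi) \le c(x,\xi)$ on $\omega_0 \times \omega_1$ and is therefore feasible for the dual problem (with marginals $\mu$ and $\nu$). Moreover, the reformulation carried out inside the proof of \cref{lem:Fconj}, see \eqref{eq:kantdual}, shows that
\begin{equation*}
    \wdist^c_\mu(\nu)
    = \sup\Big\{ \int_{\omega_0} \psi^{\overline c}\,\d\mu + \int_{\omega_1}\psi\,\d\nu : \psi \in C(\omega_1)\Big\},
\end{equation*}
and that the supremum over general feasible pairs $(\varphi,\psi)$ is already attained among pairs of the form $(\psi^{\overline c},\psi)$ (using $\varphi \le \psi^{\overline c}$ pointwise for any feasible $\varphi$ and $\mu \ge 0$). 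Consequently, the displayed equality is equivalent to $(\psi^{\overline c},\psi)$ attaining the supremum in the dual Kantorovich problem, which is the claim.

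There is no real obstacle here: the heavy lifting, namely the computation of $F^*$ and the reduction to $\overline c$-conjugate pairs, has been done in \cref{lem:Fconj}, and the continuity of $F$ ensures that the biconjugate is $F$ itself so that the symmetric subdifferential equivalence is legitimate. The only mild subtlety worth mentioning is the restriction from arbitrary dual-feasible pairs $(\varphi,\psi)$ to the canonical pair $(\psi^{\overline c},\psi)$; I would spell this out in one line, noting that $\varphi \le \psi^{\overline c}$ pointwise whenever $\varphi \oplus \psi \le c$, so that passing to $\overline c$-conjugates never decreases the dual objective against the nonnegative measure $\mu$.
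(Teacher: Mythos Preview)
Your proposal is correct and follows essentially the same route as the paper: the paper's proof is precisely the paragraph preceding the proposition, which uses the Fenchel--Young equality together with \cref{lem:Fconj} and the reformulation \eqref{eq:kantdual} to obtain the equivalence. Your version merely spells out the feasibility of $(\psi^{\overline c},\psi)$ and the reduction from arbitrary feasible pairs a bit more explicitly, but the argument is identical.
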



\section{Miscellaneous auxiliary results}\label{sec:misc}

\begin{lemma}\label{lem:localreg}
    Let $\Omega \subset \R^d$, $d \in \set{2, 3}$, be a bounded Lipschitz domain
    and fix $q \in (p_\Omega', \frac{d}{d-1})$, where $p_\Omega > d$ denotes again the exponent from \cite[Theorem~0.5(a)]{JK95}.
    Then, for every $\mu \in \frakM(\Omega)$, there exists a unique solution $w \in W^{1,q}_0(\Omega)$ of 
    \begin{equation}\label{eq:jerisonkenig}
        - \laplace w = \mu \quad \text{in } W^{-1,q}(\Omega),
    \end{equation}
    and, for every open set $M \subset \Omega$ with $\dist(M, \supp(\mu)) > 0$, there holds $w \in C(\overline{M})$.
\end{lemma}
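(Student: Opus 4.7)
The plan splits into existence/uniqueness and then local continuity via a cutoff-and-bootstrap argument. For the first part, the inequalities $p_\Omega > d \ge d/(d-1)$ place $q \in (p_\Omega', d/(d-1)) \subset (p_\Omega', p_\Omega)$, so the Jerison--Kenig isomorphism $-\laplace \colon W^{1,q}_0(\Omega) \to W^{-1,q}(\Omega)$ applies. Since $q' > d$, the Sobolev embedding $W^{1,q'}_0(\Omega) \hookrightarrow C_0(\Omega)$ gives $\frakM(\Omega) \hookrightarrow W^{-1,q}(\Omega)$ by duality, producing a unique solution $w \in W^{1,q}_0(\Omega)$.

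For the local continuity I would fix a smooth bounded $\phi \in C^\infty(\R^d)$---defined on all of $\R^d$ rather than just on $\Omega$, so that $\phi$ may equal $1$ at points of $\overline M \cap \partial\Omega$---with $\phi \equiv 1$ on an open neighborhood of $\overline M$ and $\phi \equiv 0$ on an open neighborhood of $\supp(\mu)$, and set $v := \phi w \in W^{1,q}_0(\Omega)$. A distributional computation using $\phi\mu = 0$ and the identity $\nabla\phi\cdot\nabla w = \operatorname{div}(w\nabla\phi) - w\,\laplace\phi$ gives
\begin{equation*}
  -\laplace v = -2\,\nabla\phi\cdot\nabla w - w\,\laplace\phi = -2\,\operatorname{div}(w\,\nabla\phi) + w\,\laplace\phi.
\end{equation*}
By Sobolev, $w \in L^{q^*}(\Omega)$ with $q^* := dq/(d-q)$, hence $w\nabla\phi, w\laplace\phi \in L^{q^*}(\Omega)$ and $-\laplace v \in W^{-1,q^*}(\Omega)$.

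The idea is now to bootstrap $v$ into $W^{1,p}_0(\Omega)$ for some $p > d$, so that the Sobolev embedding $W^{1,p}_0(\Omega) \hookrightarrow C(\overline\Omega)$ together with $v = w$ on $\overline M$ will conclude the proof. In two dimensions, $q > 1$ forces $q^* > 2 = d$, and a single application of Jerison--Kenig for any $p \in (d, \min(q^*, p_\Omega))$---combined with uniqueness of the Dirichlet Laplacian in the $W^{1,\min(q,p)}_0$-scale (which still lies inside the Jerison--Kenig window)---yields $v \in W^{1,p}_0(\Omega)$ directly. In three dimensions, however, $q < 3/2$ forces $q^* < 3 = d$, so one pass is insufficient; I would then introduce a nested pair of cutoffs $\phi_1, \phi_2$ with $\supp\phi_2 \subset \{\phi_1 = 1\}$ and $\phi_2 \equiv 1$ on a neighborhood of $\overline M$, both vanishing near $\supp(\mu)$. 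The first pass yields $v_1 := \phi_1 w \in W^{1,p_1}_0(\Omega)$ for some $p_1 \in (d/2, q^*] \cap (p_\Omega', p_\Omega)$ (which uses $q^* > d/2 = 3/2$, a consequence of $q > 1$), and the second pass applied to $v_2 := \phi_2 w = \phi_2 v_1$---where on $\supp\nabla\phi_2 \cup \supp\laplace\phi_2 \subset \{\phi_1=1\}$ one may replace $w$ by $v_1$---lifts the regularity to $W^{1,p_2}_0(\Omega)$ for some $p_2 > d$, since $p_1 > d/2$ translates into $p_1^* > d$.

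The main bookkeeping issue---arguably the only subtle point---is checking that in the three-dimensional case the bootstrap exponents $p_1, p_2$ can always be chosen inside the Jerison--Kenig window $(p_\Omega', p_\Omega)$ while simultaneously respecting the embedding constraints $p_1 \le q^*$ and $p_2 \le p_1^*$; this is possible precisely because the standing assumption $p_\Omega > d$ is strict, so that both intervals $(d/2, p_\Omega) \cap (0, q^*]$ and $(d, p_\Omega) \cap (0, p_1^*]$ are nonempty.
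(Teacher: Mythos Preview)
Your proposal is correct and follows essentially the same approach as the paper: a cutoff-plus-bootstrap argument using the Jerison--Kenig isomorphism, with one pass sufficing in $d=2$ and two nested cutoffs needed in $d=3$. The only cosmetic difference is that you rewrite the right-hand side in divergence form $-2\operatorname{div}(w\nabla\phi)+w\laplace\phi$, whereas the paper keeps it as a functional pairing; the exponent bookkeeping and the key observation that the inner cutoff's derivatives live where the outer cutoff equals $1$ are identical.
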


\begin{proof}
    The existence and uniqueness of solutions to \eqref{eq:jerisonkenig} is again due to \cite[Theorem~0.5]{JK95}.
    The proof of the continuity on $\overline{M}$ is based on a classical localization and boot strapping argument. 
    Due to $\dist(M, \supp(\mu)) > 0$,  there exists an open set $N$ such that 
    \begin{equation*}
      M \subset\subset N \subset\subset \R^d \setminus \supp(\mu).
    \end{equation*}
    Then we have $\dist(N, \supp(\mu)) > 0$ such that 
    there exists a non-negative function $\varphi \in C^\infty(\R^d)$ 
    with $\varphi \equiv 1$ in $N$ and $\varphi \equiv 0$ in $\supp(\mu)$. For every $v\in W^{1,q'}_0(\Omega)$, we obtain
    \begin{equation*}
    \begin{aligned}
        \int_\Omega \nabla (\varphi \, w) \cdot \nabla v\, \d\lambda^d 
        & = \int_\Omega w\, \nabla \varphi \cdot \nabla v\, \d\lambda^d + \int_\Omega \nabla w \cdot \nabla (\varphi \, v)\, \d\lambda^d 
        - \int_\Omega (\nabla w \cdot \nabla \varphi) v\, \d\lambda^d \\
        & = \int_\Omega w\, \nabla \varphi \cdot \nabla v\, \d\lambda^d + \underbrace{\int_\Omega \varphi\, v \,\d\mu}_{=0} 
        - \int_\Omega (\nabla w \cdot \nabla \varphi) v\, \d\lambda^d,
    \end{aligned}
    \end{equation*}
    where we used that $\varphi \equiv 0$ in $\supp(\mu)$.
    Therefore, $z \coloneqq w\varphi \in W^{1,q}_0(\Omega)$ solves 
    \begin{equation}\label{eq:zeq}
        - \laplace z = g \quad \text{in } W^{-1,q}(\Omega)
    \end{equation}
    with $g \in W^{-1,q}(\Omega)$ defined as
    \begin{equation*}
        \dual{g}{v} \coloneqq \int_\Omega w\, \nabla \varphi \cdot \nabla v\, \d\lambda^d - \int_\Omega (\nabla w \cdot \nabla \varphi) v\, \d\lambda^d, \
        \quad v \in W^{1,q'}_0(\Omega) .
    \end{equation*}
    Now, since $w \in W^{1,q}_0(\Omega) \embed L^s(\Omega)$ with $s = dq/(d-q)$ and $\varphi \in C^\infty(\R^d)$, 
    the right hand side $g$ is also well defined as a functional on $W^{1,\kappa}_0(\Omega)$ for all $\kappa$ satisfying 
    \begin{equation*}
        \kappa \geq s' = \frac{s}{s-1} = \frac{dq}{dq - d + q} = \frac{dq}{(d+1)q - d}.
    \end{equation*}    
    In case of $d = 2$, the condition $q < d/(d-1)$ implies $s' > 1$ such that every $\kappa > 1$ is allowed. If we choose $\kappa < 2$, but sufficiently close to two, 
    then \cite[Theorem~0.5]{JK95} yields that \eqref{eq:zeq} considered as an equation in $W^{-1,\kappa'}(\Omega)$
    admits a unique solution $\hat z \in W^{1,\kappa'}_0(\Omega)$ with $\kappa' > 2$.
    Since $\kappa' > q'$, $\hat z$ also solves \eqref{eq:zeq} and, as this equation is uniquely solvable by \cite[Theorem~0.5]{JK95}, we obtain 
    $z = \hat z \in W^{1,\kappa'}_0(\Omega) \embed C(\overline{\Omega})$ by means of Sobolev embeddings.
    Thus, we have $w \varphi = z \in C(\overline{\Omega})$ and, due to $\varphi \equiv 1$ in $N$, this gives the result for $d=2$.
    
    In case of $d=3$, we obtain $s' > 3/2$ and thus $\kappa > 3/2$, which is unfortunately not sufficient to deduce the result, so we have to repeat the argument. 
    We set $\kappa = 2$ and thus obtain $z\in H^1_0(\Omega)$ as solution of \eqref{eq:zeq} by the Lax-Milgram lemma.
    Now, we choose a function $\phi \in C^\infty(\R^d)$ with $\phi \equiv 1$ in $M$ and $\phi \equiv 0$ in $\overline{\Omega} \setminus N$.
    Then, for all $v\in H^1_0(\Omega)$, there holds
    \begin{equation*}
    \begin{aligned}
        \int_\Omega \nabla (\phi \, z) \cdot \nabla v\, \d\lambda^d 
        & = \int_\Omega z\, \nabla \phi \cdot \nabla v\, \d\lambda^d + \int_\Omega \nabla z \cdot \nabla (\phi \, v)\, \d\lambda^d 
        - \int_\Omega (\nabla z \cdot \nabla \phi) v\, \d\lambda^d \\
        & = \int_\Omega z\, \nabla \phi \cdot \nabla v\, \d\lambda^d + \underbrace{\dual{g}{\phi\, v}}_{=0} 
        - \int_\Omega (\nabla z \cdot \nabla \phi) v\, \d\lambda^d,
    \end{aligned}
    \end{equation*}
    where we used that $\phi$ vanishes in $\Omega \setminus N$ and $\varphi$ is constant in $N$. Similar to before, the right hand side
    \begin{equation*}
        \tilde g : H^1_0(\Omega) \to \R, \quad 
        \dual{\tilde g}{v} \coloneqq \int_\Omega z\, \nabla \phi \cdot \nabla v\, \d\lambda^d - \int_\Omega (\nabla z \cdot \nabla \phi) v\, \d\lambda^d
    \end{equation*}
    is well defined on $W^{1, \beta}_0(\Omega)$ with $\beta \geq 6/5$,
    since $z \in H^1_0(\Omega) \embed L^6(\Omega)$ in case of $d=3$ and $\phi \in C^\infty(\R^d)$.
    Therefore, we may choose $\beta$ smaller than, but sufficiently close to $3/2$. Then, according to \cite[Theorem~0.5]{JK95}, 
    the Poisson equation 
    \begin{equation*}
        - \laplace \tilde z = \tilde g \quad \text{in } W^{-1, \beta'}(\Omega)
    \end{equation*}
    admits a unique solution $\tilde z \in W^{1,\beta'}_0(\Omega)$. By construction, $\phi z$ is the unique solution of this equation 
    considered as an equation in $H^{-1}(\Omega)$, 
    and thus $\phi z = \tilde z \in W^{1,\beta'}_0(\Omega) \embed C(\overline{\Omega})$ because of $\beta' > 3$. 
    Since $\phi = \varphi \equiv 1$ on $M$, we finally obtain $w \in C(\overline{M})$ as claimed.
\end{proof}

\begin{lemma}[Comparison principle]\label{lem:comparison}
    Let $\Omega \subset \R^d$, $d \in \set{2, 3}$, be a bounded Lipschitz domain and
    $\mu \in \frakM(\Omega)$ be given with $\mu \geq 0$. Then $w = (-\laplace)^{-1} E_\Omega^* \mu \geq 0$ a.e.\ in $\Omega$.
\end{lemma}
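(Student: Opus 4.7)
The plan is to approximate the measure $\mu$ by non-negative smooth compactly supported functions, invoke the classical weak maximum principle for the regularized problems, and pass to the limit using the continuity of the solution operator from \cref{lem:pdeexist}.

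For the approximation step, I would take a standard non-negative mollifier $\rho_n$ with support in $B_{1/n}(0)$ and set $\Omega_n := \{x \in \Omega : \dist(x, \partial\Omega) > 2/n\}$. Then $\mu_n := \rho_n \ast (\mu \mres \Omega_n)$ lies in $C_c^\infty(\Omega)$ and satisfies $\mu_n \geq 0$. A short Fubini-plus-uniform-continuity argument applied to any $\varphi \in C(\overline\Omega)$ shows $\mu_n \weak^* \mu$ in $\frakM(\overline\Omega)$.

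For the maximum principle on the regularized problems, I would use that $\mu_n \in L^2(\Omega) \subset H^{-1}(\Omega)$ so that Lax--Milgram produces a unique $w_n \in H^1_0(\Omega)$ with $-\laplace w_n = \mu_n$; uniqueness in the $W^{1,q}_0$-framework of \cite[Theorem~0.5]{JK95} then forces $w_n = (-\laplace)^{-1} E_\Omega^* \mu_n$. Testing the equation with $w_n^- := \min(w_n, 0) \in H^1_0(\Omega)$ yields
\begin{equation*}
    \int_\Omega |\nabla w_n^-|^2 \,\d\lambda^d
    =
    -\int_\Omega \mu_n\, w_n^- \,\d\lambda^d
    \leq 0,
\end{equation*}
since $\mu_n \geq 0$ and $w_n^- \leq 0$. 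Hence $w_n^- = 0$ and thus $w_n \geq 0$ almost everywhere.

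To pass to the limit, I would exploit that $q' > d$ makes the embedding $W^{1,q'}_0(\Omega) \embed C(\overline\Omega)$ compact, together with the standard fact that the adjoint of a compact operator turns weak* convergent sequences into norm convergent ones. Therefore $E_\Omega^* \mu_n \to E_\Omega^* \mu$ strongly in $W^{-1,q}(\Omega)$, and the continuity of $(-\laplace)^{-1}$ from \cref{lem:pdeexist} yields $w_n \to w$ in $W^{1,q}_0(\Omega)$, hence in $L^1(\Omega)$, and pointwise almost everywhere along a subsequence. The non-negativity is inherited by $w$. I do not anticipate any deep obstacle; the only point needing care is the construction of $\mu_n$, where the interior cut-off $\Omega_n$ is essential to ensure $\mu_n \in C_c^\infty(\Omega)$, so that the $H^1_0$-framework used in the maximum-principle step applies and no mass escapes $\Omega$ during the mollification.
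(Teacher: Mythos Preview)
Your approach is correct but takes a genuinely different route from the paper. The paper argues by duality: for an arbitrary non-negative $g \in C_c^\infty(\Omega)$ it solves the adjoint problem $-\laplace v = g$ in $W^{1,q'}_0(\Omega) \embed C(\overline\Omega)$, shows $v \geq 0$ by testing with $\min\{0,v\}$, and then computes
\[
    \int_\Omega g\, w \,\d\lambda^d = \int_\Omega \nabla v \cdot \nabla w \,\d\lambda^d = \int_\Omega v \,\d\mu \geq 0.
\]
Since $g \geq 0$ was arbitrary, $w \geq 0$ a.e. No approximation of $\mu$ is needed.

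Your argument (mollify $\mu$, apply the $H^1_0$ maximum principle to the regularized problem, pass to the limit via complete continuity) is the more hands-on textbook route. The duality proof is shorter because it shifts the maximum-principle step to the smooth adjoint problem, where the test function is automatically admissible and no interior cut-off or limit passage is required; your version, on the other hand, makes the stability of the sign under weak$^*$ perturbation of the data explicit. One minor slip: in your displayed identity there is a spurious minus sign. Testing $-\laplace w_n = \mu_n$ with $w_n^- = \min(w_n,0)$ yields $\int_\Omega |\nabla w_n^-|^2 \,\d\lambda^d = +\int_\Omega \mu_n\, w_n^- \,\d\lambda^d \leq 0$ (as $\mu_n \geq 0$ and $w_n^- \leq 0$); the conclusion $w_n^- = 0$ is unaffected.
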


\begin{proof}
    We argue by duality. For that purpose, let $g \in C^\infty_c(\Omega)$, $g\geq 0$ a.e.\ in $\Omega$, be arbitrary and 
    define $v\in W^{1,q'}_0(\Omega) \embed C(\overline{\Omega})$ as solution of  
    \begin{equation*}
        \int_\Omega \nabla v \cdot \nabla \varphi\,\d\lambda^d = \int_\Omega g\,\varphi\,\d\lambda^d
        \quad \forall\, \varphi \in W^{1,q}_0(\Omega),
    \end{equation*}
    where $q < d / (d-1)$ is chosen such that, according to
    \cite[Theorem~0.5]{JK95}, this equation admits a unique solution.
    Moreover, by testing this equation with 
    $\varphi = \min\{0, v\}$ and using the continuity of $v$, we immediately verify $v(x) \geq 0$ for all $x\in \overline{\Omega}$.
    Together with the non-negativity of $\mu$, this yields
    \begin{equation*}
        \int_\Omega g\, w\, \d\lambda^d = \int_\Omega \nabla v \cdot \nabla w\,\d\lambda^d 
        = \int_\Omega v \,\d \mu \geq 0.
    \end{equation*}
    Since $g \in C^\infty_c(\Omega)$, $g\geq 0$, was arbitrary, this gives the claim.
\end{proof}

The next lemma provides a formula for the support of the push-forward of a measure.
Note that it is similar to \cite[Proposition~2.14]{Maggi2012},
which is not directly applicable since $T$ is only defined on $X$.

\begin{lemma}
    \label{lem:support_pushforward}
    Let $X \subset \R^n$ be compact.
    Further, let $\mu$ be a finite Borel measure on $X$
    and let $T \colon X \to \R^m$ be continuous.
    Then, $\supp(T_{\#} \mu) = T(\supp(\mu))$.
\end{lemma}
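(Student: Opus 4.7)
The plan is to verify the two inclusions $T(\supp(\mu)) \subset \supp(T_{\#}\mu)$ and $\supp(T_{\#}\mu) \subset T(\supp(\mu))$ separately by elementary point-chasing, using only continuity of $T$, the definition of the support, and the property \eqref{eq:prop_support}. As a preliminary observation, $\supp(\mu)$ is closed in $X$ and therefore compact (since $X$ is compact), so $T(\supp(\mu))$ is compact, and in particular closed, as the continuous image of a compact set.

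For the inclusion $T(\supp(\mu)) \subset \supp(T_{\#}\mu)$, I would pick $y = T(x)$ with $x \in \supp(\mu)$, fix $r > 0$, and use continuity of $T$ to find $s > 0$ such that $B_s(x) \cap X \subset T^{-1}(B_r(y))$. The defining property of the support then yields $\mu(B_s(x) \cap X) > 0$, whence
\begin{equation*}
(T_{\#}\mu)(B_r(y)) = \mu(T^{-1}(B_r(y))) \ge \mu(B_s(x) \cap X) > 0,
\end{equation*}
so $y \in \supp(T_{\#}\mu)$.

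For the reverse inclusion, I would argue by contrapositive: if $y \notin T(\supp(\mu))$, then, since $T(\supp(\mu))$ is closed, there exists $r > 0$ with $B_r(y) \cap T(\supp(\mu)) = \emptyset$, equivalently $T^{-1}(B_r(y)) \subset X \setminus \supp(\mu)$. Because $\mu$ is finite, it is inner regular on the compact metric space $X$, so \eqref{eq:prop_support} gives $\mu(X \setminus \supp(\mu)) = 0$, and hence $(T_{\#}\mu)(B_r(y)) = \mu(T^{-1}(B_r(y))) = 0$, i.e., $y \notin \supp(T_{\#}\mu)$.

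There is no real obstacle here; the only point requiring a moment of care is making sure the ambient space in which the support of $T_{\#}\mu$ is taken does not cause trouble. Since $T_{\#}\mu$ is supported in the compact set $T(X) \subset \R^m$, one may apply the same support definition as for $\mu$, and the argument goes through verbatim.
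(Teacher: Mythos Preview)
Your proof is correct. The first inclusion is argued exactly as in the paper. For the reverse inclusion the paper proceeds slightly differently: given $\xi \in \supp(T_{\#}\mu)$, it uses \eqref{eq:prop_support} to pick, for each $r>0$, a point $x_r \in T^{-1}(B_r(\xi)) \cap \supp(\mu)$, extracts a subsequential limit $x \in \supp(\mu)$ by compactness of $X$, and concludes $T(x)=\xi$ by continuity. Your contrapositive argument, which first observes that $T(\supp(\mu))$ is closed and then applies \eqref{eq:prop_support} directly to $T^{-1}(B_r(y)) \subset X \setminus \supp(\mu)$, is a bit more streamlined since it avoids the sequence extraction; both routes rest on the same two ingredients (continuity of $T$ and $\mu(X\setminus\supp(\mu))=0$), so the difference is only one of presentation.
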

\begin{proof}
    For convenience, we set $\nu := T_{\#} \mu$.

    Let $x \in \supp(\mu)$ be arbitrary. Since $T$ is continuous,
    for all $r > 0$, there exists $\rho > 0$ such that $T(B_\rho(x)) \subset B_r(T(x))$. It follows that
    \begin{equation*}
        0 < \mu(B_\rho(x)) \leq \mu\big(T^{-1}(B_r(T(x)))\big) = \nu(B_r(T(x)))\quad \forall \, r > 0,
    \end{equation*}
    which shows that $T(\supp(\mu)) \subset \supp(\nu)$.
    
    For the reverse inclusion, let $\xi\in \supp(\nu)$ be arbitrary.
    From $\nu = T_{\#} \mu$ we get
    $\mu(T^{-1}(B_r(\xi))) > 0$ for all $r > 0$. 
    Thus, for every $r>0$, \eqref{eq:prop_support}
    implies the existence of $x_r \in T^{-1}(B_r(\xi)) \cap \supp(\mu)$. By compactness of $X$, we obtain 
    the existence of $x\in X$ such that $x_r \to x$ as $r\searrow 0$ (at least for a subsequence) and the 
    continuity of $T$ implies $\xi = T(x)$. As $\supp(\mu)$ is closed, we also have $x\in \supp(\mu)$ so that 
    $\xi \in T(\supp(\mu))$.
\end{proof}

\begin{lemma}\label{lem:pisupp}
    Let $\omega_0 \subset \R^{d_0}$ and $\omega_1 \subset \R^{d_1}$ be compact.
    Let $\mu \in \frakM(\omega_0)$ and $\nu \in \frakM(\omega_1)$ be two marginals satisfying $\mu, \nu \geq 0$ 
    and $\mu(\omega_0) = \nu(\omega_1)$. Suppose moreover that $\pi$ is feasible for the associated Kantorovich problem, 
    i.e., $\pi \geq 0$, ${P_0}_\# \pi = \mu$, and ${P_1}_\#\pi = \nu$. 
    Then, for every $x\in \supp(\mu)$ there is a $y\in \supp(\nu)$ such that $(x,y)\in \supp(\pi)$. 
\end{lemma}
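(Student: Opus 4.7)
The plan is to argue by contradiction, combining compactness of $\supp(\nu)$ with the fact, recalled in \eqref{eq:prop_support}, that a finite Borel measure places no mass outside its support.

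So fix $x \in \supp(\mu)$ and suppose, towards a contradiction, that $(x,y) \notin \supp(\pi)$ for every $y \in \supp(\nu)$. Since $\supp(\pi)$ is a closed subset of the compact set $\omega_0 \times \omega_1$, for each $y \in \supp(\nu)$ I can pick open radii $r_y, s_y > 0$ such that
\begin{equation*}
    \bigl( B_{r_y}(x) \times B_{s_y}(y) \bigr) \cap \supp(\pi) = \emptyset .
\end{equation*}
By compactness of $\supp(\nu)$, finitely many balls $B_{s_{y_1}}(y_1),\dots,B_{s_{y_n}}(y_n)$ cover $\supp(\nu)$. Setting $r := \min_i r_{y_i} > 0$ and $V := \bigcup_{i=1}^n B_{s_{y_i}}(y_i)$, the set $V$ is an open neighborhood of $\supp(\nu)$ and, by construction, $B_r(x) \times V$ is disjoint from $\supp(\pi)$.

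Now I would estimate $\mu$ on the neighborhood $U := B_r(x) \cap \omega_0$ of $x$ via the marginal relation. Splitting the second factor gives
\begin{equation*}
    \mu(U)
    = \pi(U \times \omega_1)
    = \pi(U \times V) + \pi\bigl(U \times (\omega_1 \setminus V)\bigr) .
\end{equation*}
The first term vanishes because $U \times V \subset B_r(x) \times V$ lies in the complement of $\supp(\pi)$, which is a $\pi$-null set by \eqref{eq:prop_support}. For the second term, since $\supp(\nu) \subset V$, the inclusion $\omega_1 \setminus V \subset \omega_1 \setminus \supp(\nu)$ together with \eqref{eq:prop_support} applied to $\nu$ yields $\nu(\omega_1 \setminus V) = 0$, and consequently
\begin{equation*}
    \pi\bigl(U \times (\omega_1 \setminus V)\bigr)
    \leq \pi\bigl( \omega_0 \times (\omega_1 \setminus V) \bigr)
    = \nu(\omega_1 \setminus V)
    = 0 .
\end{equation*}
Therefore $\mu(U) = 0$, contradicting $x \in \supp(\mu)$, which forces $\mu(B_r(x) \cap \omega_0) > 0$ for every $r > 0$.

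There is no real obstacle here; the argument is a standard compactness/covering exercise. The only point to keep track of is that all measures involved are finite Borel measures on compact subsets of Euclidean space, so they are inner regular, justifying the use of \eqref{eq:prop_support} both for $\pi$ and for $\nu$.
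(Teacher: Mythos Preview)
Your proof is correct. The contradiction argument via a finite subcover of $\supp(\nu)$ is clean, and your use of \eqref{eq:prop_support} for both $\pi$ and $\nu$ is exactly what is needed; the only implicit edge case, $\supp(\nu)=\emptyset$, is harmless since then $\nu=0$, hence $\mu=0$ and the claim is vacuous.

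The paper takes a different, more structural route: it first proves the general identity $\supp(T_{\#}\mu)=T(\supp(\mu))$ for any continuous map $T$ on a compact space, and then applies this with $T$ equal to the two coordinate projections to get $\supp(\mu)=P_0(\supp(\pi))$ and $\supp(\nu)=P_1(\supp(\pi))$. From the first identity one immediately finds $y$ with $(x,y)\in\supp(\pi)$, and the second identity forces $y\in\supp(\nu)$. The advantage of the paper's approach is that the pushforward-support lemma is reusable elsewhere and instantly yields the symmetric statement as well; your direct covering argument, on the other hand, is self-contained and avoids isolating an auxiliary lemma.
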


\begin{proof}
    Since $\omega_0 \times \omega_1$ is compact
    and since the projections are continuous,
    we can apply
    \cref{lem:support_pushforward} to obtain
    \begin{equation*}
        \supp(\mu) = P_0( \supp(\pi) )
        \qquad\text{and}\qquad
        \supp(\nu) = P_1( \supp(\pi) )
        .
    \end{equation*}
    Let $x \in \supp(\mu)$ be given.
    The first identity shows the existence of $y$ with $(x,y) \in \supp(\pi)$
    and the second identity implies $y \in \supp(\nu)$.
\end{proof}


\printbibliography

\end{document}